\numberwithin{equation}{section}
\newcommand\dx{d^+}
\newcommand\arxiv[1]{\url{arxiv:#1}}
\newcommand\degree{d}
\newcommand\dout{\degree^+}
\newcommand\xb{\mathbf{x}}
\newcommand\ee{\mathbf{e}}
\newtheorem{thm}{Theorem}[section]
\newtheorem{prop}[thm]{Proposition}
\theoremstyle{definition}
\newtheorem{rem}[thm]{Remark}
\newtheorem{example}[thm]{Example}
\theoremstyle{remark}
\newenvironment{romenumerate}[1][0pt]{
\addtolength{\leftmargini}{#1}\begin{enumerate}
 \renewcommand{\labelenumi}{\textup{(\roman{enumi})}}%
 \renewcommand{\theenumi}{\textup{(\roman{enumi})}}%
 }{\end{enumerate}}
\newcounter{thmenumerate}
\newenvironment{thmenumerate}
{\setcounter{thmenumerate}{0}%
 \def\item{\par
 \refstepcounter{thmenumerate}\textup{(\roman{thmenumerate})\enspace}}
}
{}
\newcommand\gam{\gamma}
\newcommand\gk{\kappa}
\newcommand\gl{\lambda}
\newcommand\gL{\Lambda}
\newcommand\gs{\sigma}
\newcommand\gS{\Sigma}
\newcommand\gss{\sigma^2}
\newcommand{\refT}[1]{Theorem~\ref{#1}}
\newcommand{\refR}[1]{Remark~\ref{#1}}
\newcommand{\refS}[1]{Section~\ref{#1}}
\newcommand{\refE}[1]{Example~\ref{#1}}
\newcommand\punkt{.\spacefactor=1000} 
\newcommand\ie{i.e\punkt}
\newcommand\eg{e.g\punkt}
\newcommand\cf{cf\punkt}
\newcommand\qq{^{1/2}}
\newcommand\qqw{^{-1/2}}
\newcommand\set[1]{\ensuremath{\{#1\}}}
\newcommand\xpar[1]{(#1)}
\newcommand\bigpar[1]{\bigl(#1\bigr)}
\newcommand\Bigpar[1]{\Bigl(#1\Bigr)}
\newcommand\lrpar[1]{\left(#1\right)}
\newcommand\E{\operatorname{\mathbb E{}}}
\renewcommand\P{\operatorname{\mathbb P{}}}
\newcommand\Var{\operatorname{Var}}
\newcommand\Cov{\operatorname{Cov}}
\newcommand{\tend}{\longrightarrow}
\newcommand\dto{\overset{\mathrm{d}}{\tend}}
\newcommand\pto{\overset{\mathrm{p}}{\tend}}
\newcommand\asto{\overset{\mathrm{a.s.}}{\tend}}
\newcommand\xfrac[2]{#1/#2}
\newcommand\bbR{\mathbb R}
\newcommand\bbZ{\mathbb Z}
\newcommand\N{\mathcal N}
\newcommand\bst{binary search tree}
\newcommand\mst{\mary{} search tree}
\newcommand\mary{$m$-ary}
\newcounter{CC}
\newcounter{cc}
\newcommand\bX{\mathbf{{Y}}}
\newcommand\bZ{\mathbf{{Z}}}
\newcommand\bmu{\boldsymbol{\mu}}
\newcommand\hbmu{\boldsymbol{\hat\mu}}
\newcommand\cN{\N}
\newcommand\Polya{P{\'o}lya}
\newcommand{\tr}{\operatorname{tr}}
\newcommand\iii{^{(i)}}
\renewcommand\Re{\operatorname{Re}}
\newcommand\bbzp{\mathbb{Z}_{\ge0}}
\newcommand\pfitemx[1]{\par#1:}
\newcommand\pfitemref[1]{\pfitemx{\ref{#1}}}
\newcommand{\Fc}{\cS}
\newcommand\cT{{\mathcal T}}
\newcommand\cS{\mathcal S}
\newcommand\cX{\mathcal X}
\newcommand\cSS{\cS^*}
\newcommand\cSx{\cS^*}
\newcommand\dead{*}
\newcommand\gammm{{\max(\gam_m,0)}}
\newcommand\hD{\hat D}
\newcommand\gkn{(\gk+1)(\gk+2)(\gk+3)} 
\newcommand\aii{A$1'$}
\begin{document}
\title[Normal limit laws for fringe subtrees]
{Multivariate normal limit laws for the numbers of fringe subtrees in
  $ m $-ary search trees and preferential attachment trees} 
\author{Cecilia Holmgren}
\address{Department of Mathematics, Uppsala University, PO Box 480,
SE-751~06 Uppsala, Sweden}
\email{cecilia.holmgren@math.uu.se}
\urladdr{http://katalog.uu.se/empinfo/?id=N5-824}

\author{Svante Janson}
\email{svante.janson@math.uu.se}
\urladdr{http://www.math.uu.se/svante-janson}

\author{Matas \v{S}ileikis}
\address{Department of Applied Mathematics of Faculty of Mathematics and Physics, Charles University in Prague, Malostransk\'{e} n\'{a}m. 25, 118 00 Praha, Czech Republic}
\email{matas.sileikis@gmail.com}
\keywords{Random trees; Fringe trees; Normal limit laws; P{\'o}lya urns; $m$-ary
search trees; Preferential attachment trees; Protected nodes}

\subjclass[2010]
{Primary: 60C05 
Secondary: 05C05; 05C80; 60F05; 68P05; 68P10}

\thanks{CH partly supported by the Swedish Research Council.
SJ partly supported by the Knut and Alice Wallenberg Foundation}

\date{March 26, 2016}
\begingroup
  \count255=\time
  \divide\count255 by 60
  \count1=\count255
  \multiply\count255 by -60
  \advance\count255 by \time
  \ifnum \count255 < 10 \xdef\klockan{\the\count1.0\the\count255}
  \else\xdef\klockan{\the\count1.\the\count255}\fi
\endgroup

\begin{abstract} We study fringe subtrees of random $ m $-ary search
trees and of  preferential attachment trees, by putting them in the context
of generalised \Polya{} urns. In particular we show that for the random $ m
$-ary search trees with $ m\leq 26 $ and for the linear preferential
attachment trees, the number of fringe subtrees that are isomorphic to an
arbitrary fixed tree $ T $ converges to a normal distribution;  more
generally, we also prove multivariate normal distribution results for random
vectors of such numbers for different fringe subtrees. Furthermore, we show
that the number of protected nodes in random $m$-ary search trees for $
m\leq 26 $ has asymptotically a normal distribution.    
\end{abstract}

\maketitle

\section{Introduction}

The main focus of this paper is to consider fringe subtrees of  random 
\mst{s} and of general preferential attachment trees (including the
random recursive tree);
these random trees are defined in \refS{Strees}.
Recall that a \emph{fringe subtree} is a subtree consisting of some node and
all its
descendants, see \citet{Aldous-fringe} for a general theory,
and note that fringe subtrees typically are
``small'' compared to the whole tree. 
(All subtrees considered in the present
paper are of this type, and we will use `subtree' and `fringe subtree' as
synonyms.) 

We will use (generalised) \Polya{} urns to analyze vectors of the numbers
of fringe subtrees of different types
in random $ m $-ary search trees and general (linear) preferential
attachment trees, and in the former class we will also analyze the number of
protected nodes (that is, nodes with distance to a nearest leaf at least two). As a result, we prove multivariate normal asymptotic
distributions for these
random variables,
for \mst{s} when $m\le26$ and for preferential attachment trees with linear
weights. 

\Polya{} urns have earlier been used to
study the total number of nodes in random $ m $-ary search trees, see
\cite{Mahmoud2,Janson,Mahmoud:Polya}. In that case one only needs to
consider an urn with 
$ m-1 $ different types, describing the nodes holding $ i $ keys, where $
i\in \{0,1,\dots, m-2\} $.
For this case it is well-known that asymptotic normality does not hold for
\mst{s} with $m>26$, see \cite{ChernHwang}.
Recently, in \cite{HolmgrenJanson2} more advanced \Polya{} urns 
(with $\binom{2m}{m - 1}$ types) were used to
describe protected nodes in random $ m $-ary search trees.
Only the cases $m=2,3$ were treated in detail in \cite{HolmgrenJanson2}, 
and the cases $m=4,5,6$ were further treated in \cite{Heimburger}.
In \cite{HolmgrenJanson2} a simpler
urn (similar to the urn describing the total number of nodes) was also used
to describe the total number of leaves in random $ m $-ary search trees. 

In
this work we further extend the approach used in \cite{HolmgrenJanson2} for
analyzing arbitrary fringe subtrees of a fixed size in random $ m $-ary
search trees as well as in preferential attachment trees. For the random $ m
$-ary search trees we furthermore extend the methods used in
\cite{HolmgrenJanson2} and in \cite{Heimburger}  to analyze the number of
protected nodes in $m$-ary search trees for $ m\leq 26 $.  

\begin{rem}
The \Polya{} urns yield asymptotic results for the numbers of fringe
subtrees in \mst{s}
for $m\ge27$ too,
using \cite[Theorem 3.24]{Janson} or \cite{Pouyanne2008},
but the normalization is different and the (subsequence) limits are
presumably not normal.
In fact,  our proofs show that for any $m$,
the second largest (in real part) eigenvalue in any of our \Polya{} urns is the
same as the one in the simple \Polya{} urn mentioned above
for counting the total number of nodes,
see \refT{thm:eigenvalues}. As is well-known (see \eg{} \refT{thm:normality}
and \cite{Janson}),
the asymptotic behaviour depends crucially on the second largest eigenvalue,
so we expect the same type of asymptotic behavior
as for the number of nodes, which for $m\geq 27$  is not normal
\cite{ChernHwang}, see also \cite{ChauvinPouyanne}.
We will not consider this case further in the present paper.
\end{rem}

 \subsection{Composition of the paper}
The \mst{s} and general preferential attachment trees are defined in
\refS{Strees}.
Our main results are presented in Section \ref{main}; the results in Section
\ref{mainfringemary} and in Section \ref{protectedmary} concern the case of
random $ m $-ary search trees and the results in Section
\ref{mainfringepref} concern the case of linear preferential
attachment trees.

The results in the case of the random $ m $-ary search
trees  are extensions  of results that
previously have been shown for the special case of the random binary search
tree with the use of other methods, see e.g.,
\cite{Devroye1,Devroye2,HolmgrenJanson}. Furthermore, the results for the $
m $-ary search trees in Section \ref{protectedmary} (where we consider
applications to protected nodes in such trees) are extensions of the results
that were proved for the random binary search trees using other methods in
\cite{MahmoudWard,HolmgrenJanson},  and extensions of both the results and
the methods that were used for $ m=2,3 $ in \cite{HolmgrenJanson2} and for $
m=4,5,6 $ in \cite{Heimburger}.
  The results for the preferential attachment trees in Section
  \ref{mainfringepref} are extensions of results that previously have been
  shown for the random recursive trees, see e.g., \cite{HolmgrenJanson}.
 
  In
  particular we show that for the random $ m $-ary search trees with $ m\leq
  26 $ and for the linear preferential attachment trees, the number of
  fringe subtrees that are isomorphic to an arbitrary fixed tree $ T $
has an asymptotic
  normal distribution;  more generally, we also prove
  multivariate normal distribution results for random vectors of such
  numbers for different trees. 

In Section \ref{generalpolya} we describe
  the theory of generalised \Polya {} urns developed in \cite{Janson} that
  we use in our proofs. 

In Section \ref{polyafringe} we describe the specific \Polya{} urns that we
use for analyzing fringe subtrees in random $ m $-ary search trees, and in
Section \ref{proofs} we use them to  prove the main results for $m$-ary
search trees in Section \ref{mainfringemary}. 
Similarly, in
Section \ref{prefpolya} we describe the specific \Polya{} urns that we use
for analyzing fringe subtrees in preferential attachment trees, and in
Section \ref{proofspref} we use them to prove the main results for
preferential attachment trees in Section \ref{mainfringepref}.
In Section
\ref{protectedpolya} we describe the specific \Polya{} urns that we use for
analyzing protected nodes in random $ m $-ary search trees,
and
in Section
\ref{protectedmaryproof} we use them to prove the result
on protected nodes in  $ m $-ary search trees  in
Section \ref{protectedmary}.

In Section \ref{examples} we present some examples with explicit
calculations.

Finally,
in Section \ref{degree} we use related but simpler \Polya{} urns
to analyze the out-degrees of the nodes in the random trees.

\section{The random trees}\label{Strees}

\subsection{$m$-ary search trees}
We recall the definition of $ m $-ary search trees, see
e.g.\ \cite{Mahmoud:Evolution} or \cite{Drmota}.
An  $ m $-ary search tree, for an integer $ m\geq 2 $, is constructed
recursively
from a sequence of $n$ \emph{keys} (real numbers);
we assume that the keys are distinct.
Each node may contain up to $ m-1 $ keys. 
We start with a tree containing just an empty root.
The first $ m-1 $ keys are put in the root, and are placed in increasing
order from left to right; they divide the set of real numbers into $m$
intervals $J_1,\dots,J_m$.
When the root is full (after the first $ m-1 $ keys are added), it gets $ m$
children that are initially empty,
and each further key is passed to one of the children
depending on which interval it belongs to; a key in $J_i$ is passed to the
$i$'th child.
(The binary search tree, i.e., the case $m=2$,
is the
simplest case where
keys  are passed to the left or right child
depending on whether they are larger or smaller than the key in the root.)
The procedure repeats recursively in the subtrees until all keys are added
to the tree. 

We are primarily interested in the random case
when the keys form a
uniformly random permutation of $\set{1,\dots,n}$,
and we let $ \mathcal{T}_{n} $  denote the random $ m $-ary search tree
constructed from such keys.
Only the order of the keys matter, so alternatively, we may assume that the
keys are $n$ i.i.d.\ uniform random numbers in $[0,1]$. Moreover, considering an infinite sequence of i.i.d. keys, and defining $\mathcal T_n$, for $n = 1, 2, \dots$ as the tree constructed from the $n$ first keys, we obtain a Markov process $(\mathcal{T}_n)_{n = 1}^\infty$.

Nodes that contain at least one key are called \emph{internal}, while empty
nodes  are called \emph{external}. We regard the $m$-ary search tree as
consisting only of the internal nodes; the external nodes are places for
potential additions, and are useful when discussing the tree (e.g.\ below),
but are not
really part of the tree.
(However, the positions of the external nodes are significant. For
example, when a node in a \bst{} has exactly one internal child, we want to
know whether that is a left or a right child.)
Thus, a \emph{leaf} is an internal node that has no
internal children, but it may have external children. (It will have external
children if it is full, but not otherwise.)

From now on, when considering an \mst, we will ignore the values of the keys, but we will keep track of the number of keys in each node.
Hence, a non-random \mst{} is a (finite) ordered rooted tree where each node
is marked with the number of keys it contains, with this number being in
\set{0,\dots,m-1}, and
such that if we include the external nodes, the nodes with $m-1$ keys have exactly $m$ children while the remaining
nodes have no children.

We say that a node (external or internal) with $i\le m-2$ keys has $i+1$ \emph{gaps}, while a full
node has no gaps. It is easily seen that an $m$-ary search tree with $n$ keys
has $n+1$ gaps; the gaps correspond to the intervals of real numbers between
the keys (and $\pm\infty$).

If we condition on the isomorphism class of $\mathcal T_n$ (or even on the underlying permutation), then a new key has the same probability $1/(n+1)$ of being inserted into any of the $n+1$ gaps. Thus the $\mathcal T_{n+1}$ is obtained from $\mathcal T_n$ by choosing gap uniformly at random and inserting a key there.

\begin{rem}\label{Runordered}
In applications where the order of the children of a node does not matter,
we can simplify things by ignoring the order and regard the \mst{} as an unordered
tree. (In this case, we can also ignore the external nodes completely.)

If we treat $(\mathcal T_n)_{n = 1}^\infty$ as a sequence of unordered trees
without external nodes, then  without external nodes $\mathcal T_{n + 1}$ is
obtained from $\mathcal T_{n}$ by choosing a node with probability
proportional to $k + 1 - l$, where $k$ is the number of keys in the node and
$l$ is the number of children of the node (of course $l = 0$, if $k < m-1$),
and giving this node a new key if $k<m-1$ and a new child if $k=m-1$.
\end{rem}

\begin{rem}\label{R00}
Each permutation of \set{1,\dots,n} defines an \mst; however,
different permutations may define the same \mst.
It is possible to obtain a bijection by giving each key in the \mst{} a time
stamp, which is its number in the sequence of keys used to construct the
tree.
(For binary trees we
thus obtain so-called increasing trees, see e.g. \cite{Drmota}.)
This gives a labelled version of \mst{s}. In this context two trees $T$ and
$T'$ are isomorphic if there is an isomorphism with the additional property
that it maps the $i$'th largest time stamp of $T$ to the $i$'th largest
time stamp of $T'$.
\end{rem}

\subsection{General preferential attachment trees}\label{recursive}
Suppose that we are given a sequence of non-negative weights
$(w_k)_{k=0}^\infty$,
with $w_0>0$. Grow a random unordered tree $\Lambda_n$ (with $n$ nodes) recursively, starting
with a single node and adding nodes one by one. Each new node is added as a
child of some randomly chosen existing node;
when a new node is added to $\Lambda_{n-1}$, the
probability of choosing a node $v\in \Lambda_{n-1}$ as the parent is proportional to
$w_{\dx(v)}$, where $\dx(v)$ is the out-degree of $v$ in $\Lambda_{n-1}$.
(More formally, this is the conditional probability, given $\Lambda_{n-1}$ and the
previous history.
The sequence $(\Lambda_n)_{n=1}^\infty$ thus constitutes a Markov process.)

We will mainly consider the case of
\emph{linear  preferential attachment trees}, i.e., when
  \begin{equation}\label{wlinear}
w_k=\chi k+\rho, 	
  \end{equation}
for some real parameters $\chi$ and $\rho$,
  with $\rho=w_0>0$;
this includes the most studied cases of preferential attachment trees.
Note that we obtain the same random trees $T_n$ if we multiply all $w_k$ by
some positive constant.
Hence, only the quotient $\chi/\rho$ matters, and
it suffices to consider $\chi\in\set{1,0,-1}$.
In the case $\chi=-1$, so $w_k=\rho-k$, $w_k$ is eventually negative. This
is not allowed; however,
this is harmless if (and only if) $\rho=m$ is an integer; then $w_m=0$
so no node ever gets more than $m$ children and
thus the values $w_k$ for $k>m$ do not matter and can be replaced by 0.
(We exclude the trivial case $\chi=-1$, $\rho=1$, when $w_1=0$ so no node
ever gets more than one child and the tree $\gL_n$ deterministically is a
path with $n$ nodes.)

\begin{example}\label{ERRT}
The \emph{random recursive tree}
is constructed
recursively by adding nodes one by one, with each new node attached as a
child of a uniformly randomly chosen existing node,
see \cite[Section 1.3.1]{Drmota}.
Hence, this is the case $w_k=1$ for all $k$, which is a
special case of a linear preferential attachment tree \eqref{wlinear}
with $\chi=0$ and $\rho=1$.
(Any $\rho>0$ yields the same tree when $\chi=0$.)
\end{example}

\begin{example}\label{EPORT}
The random \emph{plane oriented recursive tree},
 introduced by \citet{Szymanski},
is constructed similarly
  to the random recursive tree, but we now consider the trees
  as ordered; an existing node with $k$ children thus has $k+1$ positions
  in which a new node can be added, and we give all possible positions of the
  new node the same probability.
The probability of choosing a node $v$ as
  the parent is thus proportional to $\dx(v)+1$, so the plane oriented
  recursive tree is the case of a 
linear preferential attachment tree with $w_k=k+1$, i.e., $\chi=\rho=1$.

This model with $w_k=k+1$ is also the preferential attachment
model by \citet{BarabasiA}, which has become popular and has been studied by
many authors, as has the generalization $w_k=k+\rho$
with arbitrary $\rho>0$,
i.e., \eqref{wlinear} with $\chi=1$.
  \end{example}

\begin{example}\label{EBST}
The \bst{}  is the special case with $w_0=2$, $w_1=1$
and $w_k=0$, $k\ge2$ (and, furthermore, each first child randomly assigned
to be left or right);
as said above, we may regard this as the case $\chi=-1$ and $\rho=2$ of
\eqref{wlinear}.
However, \mst{s} with $m\ge3$ are not preferential attachment trees.
\end{example}

\begin{rem}\label{Rpref0}
  It is often natural to consider preferential attachment trees as
  unordered; it is also possible to consider them as ordered, either by
  assigning random orders as in \refE{EPORT} or by ordering
  the children of each node in the order that they are added to the tree.
\end{rem}

For further descriptions of preferential attachment trees, see e.g.,
  \cite[Section 6]{HJbranching}.

\section{Main results}\label{main}

In this section we state the results on fringe subtrees and protected nodes in random \mst{s} as well as fringe trees in preferential attachment trees.

\subsection{Fringe subtrees in random $ m $-ary search trees}\label{mainfringemary}

\begin{rem}\label{R0}
As said in the introduction, \mst{s} can be regarded as either ordered or
unordered trees; it is further possible to consider the labelled version as in
\refR{R00}.
(See also \cite[Remark 1.2]{HolmgrenJanson} for the special case of the
binary search tree.)
The most natural interpretation is perhaps the one as ordered trees, and it
immediately implies the corresponding result for unordered trees in, for
example,
\refT{multivariate}.
However, in some applications it is preferable to regard the fringe
trees as unordered trees, since this gives fewer types to consider in the
\Polya{} urns that we use, see, e.g., Example
\ref{fringektrees}
and Section \ref{protectedpolya}.
The theorems in this section apply to all these interpretations, via the choice of an appropriate notion of isomorphism.
\end{rem}

The following theorem generalises \cite[Theorem 1.22]{HolmgrenJanson}, where
the special case of the binary search tree was analyzed.

Let $ H_m:=\sum_{k=1}^m 1/k $ be the $ m $'th harmonic number.
Here and below we write $T = T'$ whenever two trees $T$ and $T'$ are
isomorphic.

\begin{thm}\label{multivariate}%
Assume that $2\le m\le26$.
Let\/
$T^1,\dots,T^d$ be a fixed sequence of non-isomorphic non-random
$ m $-ary search trees and let\/
$\bX_{n}=\bigpar{X^{T^{1}}_{n},X^{T^{2}}_{n},\dots,X^{T^{d}}_{n}}$,
where $  X^{T^{i}}_{n}$ is the (random) number of fringe subtrees that are
isomorphic to $T^{i}$ in the random $ m $-ary search tree
$ \mathcal{T}_{n} $
 with $n$ keys.
Let $ k_i $ be the number of keys of\/ $ T^{i} $ for $ i\in\{1,\dots, d\} $.
Let 
$$
\bmu_{n}:=\E\bX_n= \lrpar{\E(X^{T^{1}}_{n}),\E(X^{T^{2}}_{n}),\dots,
\E(X^{T^{d}}_{n})}.
$$
Then
\begin{equation}\label{mv1}
n\qqw (\bX_{n}-\bmu_{n}) \dto \N(0, \Sigma ),
\end{equation}
where
$ \Sigma=(\sigma_{ij})_{i,j=1}^d$ is some covariance matrix.
Furthermore, in \eqref{mv1},
the vector $ \bmu_{n} $ can be replaced by the vector
$\hbmu_n:=n\hbmu$ with
\begin{equation}\label{hmu}
 \hbmu :=\Bigl( \frac{\P(\mathcal{T}_{k_1}=T^{1})}{(H_m-1)(k_1+1)(k_1+2)},
\dots,\frac{\P(\mathcal{T}_{k_d}=T^{d})}{(H_m-1)(k_d+1)(k_d+2)}\Bigr).
\end{equation}
Moreover, if the trees $T^1,\dots,T^d$ have at least one internal node each,
then the covariance matrix $\Sigma$ is non-singular.
\end{thm}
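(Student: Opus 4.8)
The plan is to encode the evolution of $\cT_n$ as a generalised \Polya{} urn with deterministic replacements and a fixed finite colour set, and then to read off all four assertions from the general limit theory for such urns recalled in Section~\ref{generalpolya}. One works with ordered \mst{s}; the unordered and labelled versions follow by choosing the appropriate notion of isomorphism. Put $K:=\max_i k_i$.

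\emph{Setting up the urn.} The balls are the $n+1$ gaps of $\cT_n$, so that (by the observation in Section~\ref{Strees} that a new key falls into a uniformly random gap) drawing a uniform ball is exactly one insertion step. Colour a gap $g$, lying in a node $v$, by the isomorphism type of the subtree rooted at the highest ancestor $v^{\uparrow}$ of $v$ whose rooted subtree still has at most $K$ keys, marked to record which gap of it $g$ is; if neither $v$ nor any ancestor has this property (possible only for small $K$), give $g$ one of finitely many extra, essentially absorbing, colours recording the number of keys in $v$. Since a rooted subtree only grows and an ancestor $j$ levels above $v$ carries at least $j$ keys, the colour set is finite; moreover inserting a key into $g$ changes the colours of only boundedly many balls, namely the gaps of the unique component containing $g$ (a fringe subtree with at most $K+1$ keys), and since the colour of $g$ already encodes that component, the induced change of the colour‑count vector $\mathbf N_n=(N_{n,c})_c$ is a function of the colour of the drawn ball alone. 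Thus we have a genuine generalised \Polya{} urn, balanced (each step adds exactly one ball). Finally, whenever a node $v$ has fringe subtree $=T^{i}$ it lies in the component rooted at $v^{\uparrow}$, all $k_i+1$ gaps of that fringe subtree carry the colour of that component with the matching mark, and conversely each colour accounts for at most one such $v$; hence $X^{T^{i}}_{n}=\sum_{c}a^{(i)}_{c}N_{n,c}$ with fixed constants $a^{(i)}_{c}\in\{0,\,1/(k_i+1)\}$.

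\emph{The CLT and the mean.} Apply the limit theorem for generalised \Polya{} urns from \cite{Janson} to $\mathbf N_n$. Its decisive hypothesis is that the second eigenvalue $\gl_2$ of the urn (normalised so the leading eigenvalue is $1$) satisfies $\Re\gl_2<\tfrac12$; by the eigenvalue computation of this paper, \refT{thm:eigenvalues}, $\gl_2$ coincides with the second eigenvalue of the classical $(m-1)$‑colour \mst{} urn, whose real part is below $\tfrac12$ exactly when $m\le26$ (\cite{ChernHwang}; see also \refT{thm:normality}). Checking the remaining (mild) conditions, one obtains that $n^{-1/2}(\mathbf N_n-\E\mathbf N_n)$ is asymptotically centred Gaussian; pushing this forward through the fixed linear map $c\mapsto(a^{(i)}_{c})_i$ gives \eqref{mv1} with $\Sigma$ the image covariance matrix. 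The same urn theory yields $\E\mathbf N_n=n\mathbf w+o(\sqrt n)$ for the leading right eigenvector $\mathbf w$, hence $\bmu_n=n\bmu+o(\sqrt n)$, so $\bmu_n$ may be replaced by $n\bmu$ in \eqref{mv1}. It then remains to identify $\bmu$, i.e.\ $\lim_n \E X^{T^{i}}_n/n$, with the $i$‑th entry of $\hbmu$; the required formula $\E X^{T^{i}}_n/n\to \P(\cT_{k_i}=T^{i})/\big((H_m-1)(k_i+1)(k_i+2)\big)$ follows either from the leading eigenvalue and eigenvector of the urn or, more transparently, from the recursive description of \mst{s} together with fringe‑tree theory (\cite{Aldous-fringe}), extending the binary case $H_2-1=\tfrac12$ of \cite{HolmgrenJanson}.

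\emph{Non‑singularity (the main obstacle).} One must show $\Var\big(\sum_i a_iX^{T^{i}}_n\big)=\Theta(n)$ for every $a\in\bbR^{d}\setminus\{0\}$; the hypothesis that each $T^{i}$ has an internal node is sharp, since for $T^{i}$ the single external node $X^{T^{i}}_n=n+1$ is deterministic and would create a null direction. The $O(n)$ bound is part of the CLT, so only the lower bound $\Omega(n)$ is needed, and I would pursue one of two routes. (a) From the urn: the general theory also gives that the limiting covariance of $\mathbf N_n$ is non‑degenerate on the hyperplane orthogonal to the deterministic total‑count direction, and one verifies that the map $\mathbf N_n\mapsto(X^{T^{i}}_n)_i$ stays injective there --- equivalently, that the vectors $(a^{(i)}_{c})_c$, $i=1,\dots,d$, are linearly independent modulo the strictly positive Perron left eigenvector; this uses that the $T^{i}$ are pairwise non‑isomorphic and each non‑empty, so that for each $i$ there is a colour $c$ with $a^{(i)}_{c}\ne0$ but $a^{(j)}_{c}=0$ for $j\ne i$, whence any $\Sigma$‑null combination would be proportional to a nowhere‑vanishing vector and hence zero. (b) A self‑contained martingale estimate: in the Doob decomposition of $\sum_i a_iX^{T^{i}}_n$ along the insertions, a positive fraction of the $n$ orthogonal increments have conditional variance bounded below, because inserting a key into a gap chosen uniformly among the last $\Theta(n)$ changes $\sum_i a_iX^{T^{i}}$ by a non‑constant amount with probability bounded away from $0$ --- again using non‑isomorphy and non‑emptiness of the $T^{i}$ to exclude a hidden affine relation among the $X^{T^{i}}_n$ --- so summing gives $\Var=\Omega(n)$. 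Either route, combined with the multivariate CLT, shows $\Sigma$ is positive definite, completing the proof.
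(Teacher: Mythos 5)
Your overall strategy --- encode the dynamics of $\cT_n$ as a balanced generalised \Polya{} urn, invoke the CLT of \cite{Janson} after checking the eigenvalue gap $\Re\gl_2<\gl_1/2$ via an inductive eigenvalue computation, push forward through the fixed linear map onto the counts $X^{T^i}_n$, and identify the mean by an independent fringe‑tree argument --- is exactly the paper's. But two steps as written do not go through.

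\emph{The gap‑based urn violates (A1).} You take the balls to be the $n+1$ gaps and colour a gap by the pair (component type, gap position). The problem is that inserting a key into one gap $g$ forces a \emph{recolouring} of the other gaps in the same component, so the replacement vector $\xi_c$ for the drawn colour $c=(T,p)$ has entries $\xi_{c,c'}=-1$ for several $c'\ne c$ (take $m=2$, $K=2$, and insert at the middle gap of a left $2$--path: the component becomes a right $2$--path, so one ball of each of the colours $(\text{lp},\text{ll})$ and $(\text{lp},\text{r})$ is removed). That is precisely the situation that condition (A1) excludes, and Theorem~\ref{thm:normality} is stated under (A1); the paper only resorts to the gap‑ball picture in Remark~\ref{RSN}, for the moment convergence via \cite{JansonPouyanne} which tolerates such subtractions. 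To stay inside the hypotheses of Theorem~\ref{thm:normality} you should take the balls to be the \emph{components} themselves (as in Section~\ref{polyafringe}), with activity equal to the number of gaps; then drawing a ball and replacing it never touches another ball, (A1) holds, and the activity‑weighted draw is still a uniformly random gap. Relatedly, once you switch urns, the eigenvalue claim has to be made for the intensity matrix of the urn you actually use; Theorem~\ref{thm:eigenvalues} is proved for the component urn, and citing it for the gap urn (a larger matrix with extra eigenvalues) is a further unaddressed step.

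\emph{Non‑singularity.} Route (a) starts from the assertion that the limiting covariance of $\mathbf N_n$ ``is non‑degenerate on the hyperplane orthogonal to the deterministic total‑count direction''. This is false for your urn: since $N_{n,(T,p)}=N_{n,(T,p')}$ identically for all positions $p,p'$ in the same type $T$, the covariance of $\mathbf N_n$ is degenerate in every direction $e_{(T,p)}-e_{(T,p')}$, and more. The correct reduction (which the paper uses) is: by \cite[Theorem~3.6]{JansonMean}, $b'\gS b=0$ implies $b'\bX_n$ is deterministic for every $n$; so one only has to exhibit, for each nonzero $b$ with all $k_i\ge1$, two $m$--ary search trees with the same number of keys realising different values of $\sum_i b_iX^{T^i}$. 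The paper does this by an explicit construction (a long rightmost path $T_1$ versus a shorter rightmost path $T_2$ carrying $m-1$ copies of $T_1$ as first children), which differ in the count of $T^1$ but in no other $T^i$. Your route (b) gestures at a martingale variance lower bound, but the step ``inserting a key ... changes $\sum_i b_iX^{T^i}$ by a non‑constant amount with probability bounded away from $0$ --- using non‑isomorphy and non‑emptiness to exclude a hidden affine relation'' is precisely what has to be proved, and no construction is given. So as written, neither route establishes that $\gS$ is non‑singular.

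The identification of the limiting mean $\hbmu$ (via the Perron eigenvector plus the fringe‑tree asymptotics from \cite{HJbranching,Aldous-fringe}) and the replacement of $\bmu_n$ by $n\hbmu$ (via the $o(\sqrt n)$ error from the spectral gap) are along the paper's lines and sound.
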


\begin{rem}\label{Rmean}
The fact that $ \bmu_{n} $ can be replaced by the vector $  \hbmu_{n}  $
means that
\begin{equation}\label{rectus}
\E\bigpar{X^{T^{i}}_{n}}
=\frac{\P(\mathcal{T}_{k_i}=T^{i})}{(H_m-1)(k_i+1)(k_i+2)}n +o\bigpar{n\qq}.
\end{equation}
A weaker version of \eqref{rectus} with the error term $o(n)$
follows, for any $m\ge2$,
from the branching process analysis of fringe subtrees in
\cite{HJbranching}, see the proof in \refS{proofs}.
Moreover,
the proof also shows
that \eqref{rectus} holds, for any $m\ge2$,
with the error term $O\bigpar{n^{\max(\gam_m,0)}}$, where
$\gam_m$ is the second largest real part of a root of the polynomial
$\phi_m$ in \refT{thm:eigenvalues}; if $m\le26$ then $\gam_m<\frac12$
(yielding \eqref{rectus}) but if
$m\geq 27$ then $\gam_m>\frac12$, as shown by
\cite{MahmoudPittel} and \cite{FillKapur}.

The vector $\hbmu_n$ can also, using \eqref{xy} below,
be calculated
from an eigenvector of the intensity matrix of the \Polya{} urn defined in
\refS{polyafringe}, see \refT{thm:normality}\ref{T0a}.
See also \cite{JansonMean}.

Also the covariance matrix $ \Sigma= (\sigma_{ij})_{i,j=1}^d$ can be
calculated explicitly from the intensity matrix of the \Polya{} urn,
see \refT{thm:normality}\ref{T0b}--\ref{T0c}.
We give one example in Section \ref{examples}.
The results in \cite{JansonMean} also show
\begin{equation}\label{mv0}
\sigma_{ij}
=\lim_{n\to \infty}\frac{1}{n}\Cov\bigpar{X^{T^{i}}_{n},X^{T^{j}}_{n}}.
\end{equation}
More generally, it follows from the results in
\cite{JansonPouyanne} that all moments converge in \eqref{mv1},
see \refR{RSN}.
Similarly, as a consequence of \cite{JansonPouyanne} and \cite{JansonMean},
moment convergence and, in particular, asymptotics of variance and
covariances as in \eqref{mv0} hold in all theorems in this section.
\end{rem}

\begin{rem}\label{Rsing}
  The covariance matrix may be singular if some $T^i$ is the tree consisting
  of a single external node. For example, if $d=m-1$ and $T^i$ is the tree
  consisting of a single node with $i-1$ keys, and thus $i$ gaps,
then, by counting the number of gaps in the tree $\cT_n$,
\begin{equation}\label{sing}
  \sum_{i=1}^d i X^{T^i}_n=n+1,
\end{equation}
so this sum is deterministic, and thus the covariance matrix is singular.
(In particular, if $m=2$, then the number of external nodes is
deterministic, namely $n+1$.)
Moreover, \eqref{sing} shows that the number of external nodes $X^{T^1}$
is an affine function of the numbers $X^{T^j}$, $j\ge2$; thus it is always
possible to reduce to the case when every tree $T^i$ has at least one
internal node and $\gS$ is non-singular.
\end{rem}

The following theorem is an important corollary of Theorem 
\ref{multivariate}. 
It also follows from \citet[Theorem 5.1]{FillKapur}.
The special case of the random binary search tree
was proved by \citet{Devroye1}, and the covariances for $ Y_{n,k} $ 
in that case were given by \citet{DennertGrubel}, 
see also
\cite[Theorem 1.19 and Proposition 1.10]{HolmgrenJanson}.

\begin{thm}\label{variance} 
Assume that $2\le m\le26$.
Let $ k\ge0 $ be an arbitrary fixed integer.
and let $ X_{n,k} $ be the (random) number of fringe subtrees with $k$ keys
in the random $ m $-ary search tree $ \cT_n $
 with $n$ keys.
Then, as $n\to \infty$, 
 \begin{align}\label{main2a}
n^{-1/2}\bigpar{Y_{n,k}-\E Y_{n,k}} 
&\dto \mathcal{N}(0,\sigma^{2}_k), 
 \end{align}
 where
$\sigma^{2}_k$ is some constant with $\sigma^2_k>0$ except when $k=0$ and $m=2$.
We also have
 \begin{align}\label{main2b}
n^{-1/2}\Bigpar{Y_{n,k}-\frac{n}{(H_m-1)(k+1)(k+2)}}
&\dto \mathcal{N}(0,\sigma^{2}_k).
 \end{align}
 \end{thm}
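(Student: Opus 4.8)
The plan is to deduce \refT{variance} from \refT{multivariate}. First I would observe that a fringe subtree with $k$ keys is isomorphic to exactly one of the finitely many $m$-ary search trees with $k$ keys; call these $T^1,\dots,T^d$ (where $d = d(k,m)$ is finite since each such tree has at most $k$ internal nodes and bounded branching). Then
\begin{equation*}
Y_{n,k} = \sum_{i=1}^d X^{T^i}_n,
\end{equation*}
i.e.\ $Y_{n,k} = \langle \etta, \bX_n\rangle$ where $\etta=(1,\dots,1)$. (Here I am reading $X_{n,k}$ and $Y_{n,k}$ in the statement as the same quantity; the two symbols appear interchangeably and I would unify the notation.) Applying the multivariate CLT \eqref{mv1} and the continuity of linear maps, $n^{-1/2}(Y_{n,k}-\E Y_{n,k}) \dto \N(0,\sigma^2_k)$ with $\sigma^2_k = \etta^{\mathsf T}\Sigma\,\etta = \sum_{i,j}\sigma_{ij}\ge 0$, which is \eqref{main2a}. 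For the mean replacement \eqref{main2b}, sum the componentwise asymptotics \eqref{rectus} (equivalently use $\hbmu_n$ from \eqref{hmu}) over $i=1,\dots,d$:
\begin{equation*}
\sum_{i=1}^d \frac{\P(\cT_k = T^i)}{(H_m-1)(k+1)(k+2)} = \frac{1}{(H_m-1)(k+1)(k+2)}\sum_{i=1}^d \P(\cT_k=T^i) = \frac{1}{(H_m-1)(k+1)(k+2)},
\end{equation*}
since all trees $T^i$ have exactly $k$ keys so the events $\{\cT_k=T^i\}$ partition the probability space; hence $\E Y_{n,k} = \frac{n}{(H_m-1)(k+1)(k+2)} + o(n^{1/2})$ and \eqref{main2b} follows from \eqref{main2a} by Slutsky.

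The remaining point is positivity of $\sigma^2_k$, and this is where care is needed, because \refT{multivariate} only guarantees $\Sigma$ non-singular after discarding the single-external-node tree, and non-singularity of $\Sigma$ does not by itself force a particular linear functional $\etta^{\mathsf T}\Sigma\etta$ to be nonzero. I would argue separately in two regimes. If $k\ge 1$, every tree with $k$ keys has at least one internal node, so by the last sentence of \refT{multivariate} the full $\Sigma$ is non-singular (positive definite), whence $\sigma^2_k = \etta^{\mathsf T}\Sigma\,\etta > 0$. If $k = 0$, the only fringe subtree with $0$ keys is the single external node, so $Y_{n,0} = X^{T^1}_n$ equals the number of external (empty) nodes; when $m\ge 3$ this number is genuinely random (one can exhibit two orderings of a few keys giving different counts, or invoke the urn computation), so $\sigma^2_0>0$, whereas when $m=2$ the number of external nodes is deterministically $n+1$ by \eqref{sing}, so $\sigma^2_0 = 0$ — matching the stated exception.

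The main obstacle is the positivity claim in the borderline cases rather than the CLT itself, which is a one-line corollary. Concretely, I expect the cleanest route to $\sigma^2_k>0$ for $k\ge1$ to be the reduction in \refR{Rsing}: split off the external-node coordinate if it is among the $T^i$ (it is not, when $k\ge 1$), so that the relevant $\Sigma$ is the non-singular one and the positive-definiteness argument applies directly. One should also double-check that $\sigma^2_k$ defined via $\Sigma$ agrees with $\lim_n \frac1n \Var(Y_{n,k})$; this is immediate from \eqref{mv0} and bilinearity of the limiting covariance, and gives the explicit-variance interpretation promised in the remarks. Finally, I would remark that the alternative derivation via \cite[Theorem 5.1]{FillKapur} is available as a cross-check for the one-dimensional statement.
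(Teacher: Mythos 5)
Your overall approach is exactly the paper's: write $Y_{n,k}=\sum_{i=1}^d X_n^{T^i}$ over the finitely many $m$-ary search trees with $k$ keys, apply \refT{multivariate} and a linear functional, and sum \eqref{rectus} using $\sum_i\P(\cT_{k}=T^i)=1$ to get \eqref{main2b}. The positivity argument for $k\ge1$ via non-singularity of $\Sigma$ is also the paper's.

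However, there is a genuine gap in the case $k=0$, $m\ge3$. You write that $Y_{n,0}$ is ``genuinely random (one can exhibit two orderings of a few keys giving different counts\ldots), so $\sigma^2_0>0$.'' That implication is not valid on its own: showing $\Var(Y_{n,0})>0$ for some finite $n$ does not by itself give $\lim_n n^{-1}\Var(Y_{n,0})>0$ — a sequence can be non-deterministic at every finite $n$ yet have vanishing limiting variance at the $\sqrt{n}$ scale. To close this you would need to invoke the fact (used elsewhere by the paper, citing \cite[Theorem 3.6]{JansonMean}) that for these balanced urns, a vanishing limiting variance of a linear functional forces that functional to be deterministic for \emph{every} $n$. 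The paper instead avoids this entirely by a cleaner identity-based argument you did not supply: by \eqref{sing}, $Y_{n,0}=n+1-\sum_{j=1}^{m-2}(j+1)Y_{n,j}$, and applying \refT{multivariate} to the sequence of all \mst{s} with between $1$ and $m-2$ keys (each with an internal node, so the associated covariance matrix is non-singular) shows the asymptotic variance of $\sum_{j=1}^{m-2}(j+1)Y_{n,j}$ — and hence of $Y_{n,0}$ — is positive. Either route works, but as written your $k=0$, $m\ge3$ step is incomplete; the rest of the argument is fine and matches the paper's.
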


\begin{rem}
 The asymptotic mean $ \frac{n}{(H_m-1)(k+1)(k+2)} $ in \eqref{main2b}
easily follows from \eqref{rectus}, see the proof in \refS{proofs}.
 The constant $\sigma^{2}_k$ can again be calculated explicitly from our
 proof. 
\end{rem}
We give one example of Theorem \ref{variance} in Section \ref{ex1},
where we let $ m=3 $ and $ k=4 $.

\subsection{Protected nodes in random $ m $-ary search trees}\label{protectedmary}

There are many recent studies of so-called protected nodes in various
classes of random trees, see
e.g.\ 
\cite{Bona,CheonShapiro,DevroyeJanson,DuProdinger, MahmoudWard,Mansour, HolmgrenJanson, HolmgrenJanson2, HJbranching}. 
A node is \emph{protected} (more precisely, two-protected) if it is not a
leaf and  
none of its children is a leaf.  

The following result was proved by using P\'olya urns in 
\cite[Theorem  1.1]{HolmgrenJanson2} for $m=3$ and in \cite{Heimburger} for
$m=4$, $5$ and $6$.
\begin{thm}\label{mainprotected}
Let $ Z_{n} $ be the number of protected
  nodes in the random $ m $-ary search tree $\cT_n$ with $n$ keys.
Then, if $ m\leq 26 $, we have
\begin{equation}\label{prot}
n\qqw\bigpar{Z_{n}-\E Z_{n}}\dto
\N\lrpar{0,\sigma^{2}}, 
\end{equation}
where 
$ \sigma^{2} $ is some positive constant.
Furthermore, $\E Z_{n}$ can be replaced by $\mu n$
with
\begin{equation}\label{muprot}
  \mu:=\frac{1}{m(H_m-1)}\sum_{\ell=0}^{m-1}
  \frac{m!}{(m-\ell)!}\cdot
  \frac{(m(m-\ell))!}{(m(m-\ell)+\ell+1)!}.
\end{equation}
\end{thm}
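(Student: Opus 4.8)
The plan is to realize the protected nodes of $\cT_n$ via a generalised \Polya{} urn, in the spirit of \cite{HolmgrenJanson2} and \cite{Heimburger}, and then apply the general normality theorem (\refT{thm:normality}) once we verify that the relevant eigenvalue condition holds for all $m\le26$. The first step is to set up the urn. Whether a node $v$ is protected depends only on the local structure of $v$, its children, and its grandchildren: concretely, on the number of keys in $v$, in each child of $v$, and on whether each child is a leaf (equivalently, whether each grandchild-position is external). So I would let the \emph{types} of the urn be the finitely many isomorphism classes of such ``depth-$2$ neighbourhoods'' of a gap (an external node together with the ancestral data needed to decide, after future insertions, how protected-node counts change). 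This is exactly the kind of urn described in \refS{protectedpolya}; I will take its construction there as given. Inserting a key into a gap is a local operation, so the replacement rules of the urn are well defined, and the count $Z_n$ of protected nodes is (up to a deterministic affine correction coming from boundary effects near the root, which is $O(1)$ and hence negligible after dividing by $\sqrt n$) a fixed linear functional $\mathbf{a}\cdot\mathbf{X}_n$ of the urn's composition vector $\mathbf{X}_n$.

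The second step is the eigenvalue analysis. By \refT{thm:normality}, asymptotic normality of a linear functional of the urn's composition holds provided the largest eigenvalue $\lambda_1$ of the urn's intensity matrix is simple with a positive eigenvector, and every other eigenvalue $\lambda$ satisfies $\Re\lambda<\tfrac12\lambda_1$ (and the functional is not orthogonal to the dominant directions, which gives $\sigma^2>0$). After normalising so that $\lambda_1=1$, the content of \refT{thm:eigenvalues} — which I am entitled to use — is precisely that the second-largest real part of an eigenvalue of \emph{any} of the \Polya{} urns in this paper equals $\gamma_m$, the second-largest real part of a root of the characteristic polynomial $\phi_m$ of the classical ``number of nodes'' urn, and that $\gamma_m<\tfrac12$ exactly when $m\le26$ (by \cite{MahmoudPittel,FillKapur}). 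So for $m\le26$ the spectral gap condition is satisfied, and \eqref{prot} follows, with $\sigma^2>0$ once we check (again a routine computation using the structure of the functional $\mathbf a$) that $Z_n$ genuinely depends on the fluctuating coordinates of the urn.

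The third step is identifying the asymptotic mean. The general urn theory gives $\E\mathbf X_n = n\,\mathbf v_1 + o(n)$ with $\mathbf v_1$ the suitably normalised dominant eigenvector, hence $\E Z_n=\mu n+o(n)$ for $\mu=\mathbf a\cdot\mathbf v_1$; the spectral gap $\gamma_m<\tfrac12$ upgrades the error to $o(\sqrt n)$, which is what lets us replace $\E Z_n$ by $\mu n$ in \eqref{prot}. To get the closed form \eqref{muprot} I would instead compute $\mu$ directly by the fringe-tree / branching-process method of \cite{HJbranching}: the asymptotic density of protected nodes is $\frac{1}{(H_m-1)}\sum_{\text{fringe shapes }T\text{ rooted at a protected node}}\frac{\P(\cT_{|T|}=T)}{(|T|+1)(|T|+2)}$-type expression; summing the probabilities that a node with $\ell$ external grandchildren-configuration is protected, over $\ell=0,\dots,m-1$, and using the known product formula $\P(\cT_k=T)$ for the ``most balanced'' local shapes collapses the sum to $\frac{1}{m(H_m-1)}\sum_{\ell=0}^{m-1}\frac{m!}{(m-\ell)!}\cdot\frac{(m(m-\ell))!}{(m(m-\ell)+\ell+1)!}$. (Equivalently, one reads $\mu$ off the dominant eigenvector of the urn and checks the two formulas agree.)

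The main obstacle is the \emph{bookkeeping} in Step 1: choosing a type set that is simultaneously (a) rich enough that the insertion dynamics are Markovian on types and that $Z_n$ is an exact linear functional of the composition, yet (b) not so rich that computing or even describing the intensity matrix becomes unmanageable — this is why \cite{HolmgrenJanson2} only did $m=2,3$ and \cite{Heimburger} reached $m=6$ by hand. The new input that makes general $m$ tractable is \refT{thm:eigenvalues}: rather than diagonalising these large matrices, one shows abstractly (via a factorisation of the characteristic polynomial, with the ``new'' factors having all roots with real part $\le0<\gamma_m$) that the second eigenvalue is inherited from the small classical urn. So the real work is: (i) defining the urn and the functional cleanly, (ii) proving the polynomial factorisation behind \refT{thm:eigenvalues} for the protected-node urn, and (iii) the elementary but fiddly combinatorial identity yielding \eqref{muprot}. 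Steps (ii)–(iii) are where I expect the bulk of the effort to lie; the normality conclusion itself is then a direct invocation of \refT{thm:normality}.
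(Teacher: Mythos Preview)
Your approach is essentially the paper's, but two points deserve sharpening. First, \refT{thm:eigenvalues} as stated covers only the fringe-subtree urns of \refS{polyafringe}, not ``any of the \Polya{} urns in this paper''; the paper proves a separate result (\refT{thm:proteigenvalues}) for the protected-node urn by rerunning the same induction---reduced urns $\cX_n^k$ with types $\cS_k$, showing each is a genuine \Polya{} urn, computing the diagonal of $A_k$ (Proposition~\ref{prop:protdiag}), and then using the trace argument to identify the one new eigenvalue at each step as $-a_{k+1}$. You correctly anticipate this in your item~(ii), so the discrepancy is only in how you phrase the citation, but be aware that verifying the reduced process is actually a \Polya{} urn (your ``bookkeeping'' concern) requires a short case analysis that the paper carries out explicitly.

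Second, your argument for $\sigma^2>0$ is vaguer than necessary. The paper's route is cleaner: by \cite[Theorem~3.6]{JansonMean}, $\sigma^2=0$ would force $Z_n$ to be deterministic for every $n$, and one rules this out by exhibiting a single $n$ (e.g.\ $n=2m-1$) for which $Z_n$ takes two values with positive probability. Also, $Z_n$ is an exact affine function of the urn composition for $n\ge m$ (the dead-type count is recovered from the key total), not merely ``up to $O(1)$ boundary effects''. The identification of $\mu$ via \cite{HJbranching} and the upgrade of the mean error to $o(\sqrt n)$ via \cite{JansonMean} are exactly as the paper does it.
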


\begin{rem}\label{Rmeanprot}
The fact that $\E Z_n$ can be replaced by $\mu n$ means that 
\begin{equation}\label{muprot0qq}
\E(Z_{n})=\mu n+o\bigpar{n\qq}.  
\end{equation}
As in \refR{Rmean}, a weaker version 
with $o(n)$
follows for any $m\ge2$   
from \cite{HJbranching}, see the proof in \refS{protectedmaryproof}.
Moreover, our proof shows that 
\eqref{muprot0qq} holds,
for any $m\ge2$, 
with the error term $O\bigpar{n^{\max(\gam_m,0)}}$, with $\gam_m$ as in
\refR{Rmean}. 
\end{rem}

 The constant $\sigma^{2}_m$ can  be calculated explicitly from our proof of
 Theorem \ref{mainprotected}.
For examples of Theorem \ref{mainprotected} with explicit calculations of the asymptotic variance $ \sigma^{2} $, we refer the reader to \cite{HolmgrenJanson2} for $ m=3 $ and \cite{Heimburger} for $ m=4 $.

\subsection{Fringe subtrees in preferential attachment trees}\label{mainfringepref}

The following theorem was proved for the random recursive tree in
\cite[Theorem 1.22]{HolmgrenJanson} using Stein's method.
Here we give a generalisation to the linear preferential attachment trees. The result applies to all three versions of the tree as mentioned in Remark \ref{Rpref0} with the notion of isomorphism chosen appropriately.

\begin{thm}\label{recursivemulti}%
 Let 
$\Lambda^1,\dots,\Lambda^d$ be a fixed sequence of non-isomorphic unordered (or ordered) trees and let 
$\bZ_{n}=(X^{\Lambda^{1}}_{n},X^{\Lambda^{2}}_{n},\dots,X^{\Lambda^{d}}_{n}),$ where $  X^{\Lambda^{i}}_{n}$ is the number of fringe subtrees that are isomorphic to $\Lambda^{i}$ in 
the linear preferential attachment tree $ \Lambda_{n} $. Let $ k_i $ be the number of nodes in $\Lambda^{i}$.
Let $$ 
\bmu_{n}:=\E\bZ_n= \lrpar{\E(X^{\Lambda^{1}}_{n}),\E(X^{\Lambda^{2}}_{n}),\dots,
\E(X^{\Lambda^{d}}_{n})}. 
$$ 
Then 
\begin{equation}\label{multipref}
n\qqw (\bZ_{n}-\bmu_{n}) \dto \N(0, \Sigma ),
\end{equation}
where  the vector $ \bmu_{n} $ can be replaced with the vector
$\hbmu_n:=n\hbmu$ with
\begin{equation}\label{hmupref}
 \hbmu :=\Bigl(\frac{
  \P(\Lambda_{k_1}=\Lambda^{1})\cdot\gk}{(k_1+\gk-1)(k_1+\gk)},\dots,
\frac{\P(\Lambda_{k_d}=\Lambda^{d})\cdot\gk}{(k_d+\gk-1)(k_d+\gk)}\Bigr),   
\end{equation}
with
\begin{equation}\label{kappa}
  \gk:=\frac{\rho}{\chi+\rho}=\frac{w_0}{w_1},
\end{equation}
 and $ \Sigma =(\sigma_{ij})_{i,j=1}^d$ is some non-singular covariance matrix .
\end{thm}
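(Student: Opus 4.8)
The plan is to follow the same \Polya{} urn strategy that underlies \refT{multivariate}, but now for the preferential attachment model. First I would set up, for a fixed finite collection of target trees $\gL^1,\dots,\gL^d$ (say with $k_i$ nodes each, all with $k_i \le K$ for some $K$), a generalised \Polya{} urn whose ball types encode enough local information about the fringe structure of $\gL_n$ to track the counts $X^{\gL^i}_n$. Concretely, each node of the growing tree should carry a type recording the isomorphism class of the fringe subtree rooted at it \emph{truncated at depth/size} $K$ (together with its out-degree, which is what drives the attachment dynamics via the weights $w_k = \chi k + \rho$). When a new node is attached to a parent $v$ chosen with probability proportional to $w_{\dx(v)}$, the type of $v$ changes in a deterministic way (its out-degree increments, and possibly the truncated fringe class updates), the new leaf gets the type of a single node, and — crucially — the types of the $O(K)$ ancestors of $v$ within distance $K$ may also change; since $K$ is fixed this is still a bounded, finitely-describable replacement rule, so it fits the framework of \refS{generalpolya}. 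The counts $X^{\gL^i}_n$ are then fixed linear functionals of the urn composition vector.

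The next step is to compute the relevant spectral data of the intensity matrix $A$ of this urn and apply \refT{thm:normality}. The activity of a ball of out-degree $k$ is $w_k = \chi k + \rho$, and the total activity after $n$ nodes grows linearly, with the largest eigenvalue $\gl_1$ equal to $\chi + \rho$ (one child is added per step and the mean increase of total weight per step is $\chi + \rho$); normalising, $\gl_1 = 1$ in the standard convention, and one reads off $\gk = \rho/(\chi+\rho) = w_0/w_1$ as the natural scaling constant — this is exactly the $\gk$ appearing in \eqref{kappa}. The point of the hypothesis of linear weights (and of the constraint that $\chi \in \{1,0,-1\}$ with $\rho>0$, excluding the degenerate path case) is that the urn is then a \emph{small} urn in the sense of \cite{Janson}: the second-largest real part of an eigenvalue is strictly below $\tfrac12 \gl_1$. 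In fact I expect, as in \refT{thm:eigenvalues} and the accompanying remark for $m$-ary trees, that the subleading eigenvalues of the big urn are the same as those of the simple out-degree urn analysed in \refS{degree}, which for linear weights are easily seen to satisfy $\Re\gl_2 < \tfrac12$. Hence \refT{thm:normality} gives $n^{-1/2}(\bZ_n - \bmu_n) \dto \N(0,\gS)$ with $\gS$ computable from $A$ as in \refT{thm:normality}\ref{T0b}--\ref{T0c}, and $\bmu_n = n\hbmu + o(n^{1/2})$ with $\hbmu$ proportional to the left eigenvector of $A$ for $\gl_1$.

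It then remains to identify the entries of $\hbmu$ with the explicit formula \eqref{hmupref}, and to check non-singularity of $\gS$. For the mean, the branching-process / fringe-tree results of \cite{HJbranching} already give $\E X^{\gL^i}_n / n \to p_i$ for an explicit limiting fringe-distribution probability $p_i$; the computation that $p_i = \P(\gL_{k_i}=\gL^i)\cdot \gk / \big((k_i+\gk-1)(k_i+\gk)\big)$ is the analogue of the $m$-ary computation giving \eqref{hmu}, and reduces to evaluating, in the size-biased infinite limit tree, the probability that a node's fringe subtree is exactly $\gL^i$: this factors into the probability $\P(\gL_{k_i}=\gL^i)$ of building that shape in $k_i$ steps and a combinatorial factor coming from the waiting-time / weight structure, which after simplification is $\gk/((k_i+\gk-1)(k_i+\gk))$; upgrading the error from $o(n)$ to $o(n^{1/2})$ is automatic once the urn is identified as small. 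For non-singularity of $\gS$, one argues as in the proof of \refT{multivariate}: a vanishing linear combination $\sum_i c_i X^{\gL^i}_n$ would have to be (asymptotically) deterministic, and by projecting onto the relevant eigenspaces of $A$ one shows this forces all $c_i = 0$, unless one of the $\gL^i$ is the trivial single-node tree — but unlike the $m$-ary case there is no external-node/gap constraint here (the single node is itself a legitimate fringe tree with no extra linear relation), so $\gS$ is genuinely non-singular for \emph{every} choice of the $\gL^i$, which is why the statement asserts non-singularity unconditionally.

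The main obstacle I anticipate is the eigenvalue estimate: verifying rigorously that the big urn (with its large, structured type space) has no eigenvalue with real part $\ge \tfrac12 \gl_1$ other than $\gl_1$ itself. As in the $m$-ary analysis, the right approach is to exhibit a block/triangular structure of the intensity matrix — ordering types by the size of the truncated fringe — so that the characteristic polynomial factors and the "new" eigenvalues all come from the bottom block, which is exactly the simple out-degree urn of \refS{degree}; then one only needs the elementary fact that for linear weights $w_k = \chi k + \rho$ that small urn has $\Re\gl_2 < \tfrac12$. Getting this reduction clean (especially bookkeeping which ancestor types change on an insertion) is the technical heart of the proof; everything else is either a direct appeal to \refT{thm:normality} or a routine, if lengthy, combinatorial identification of $\hbmu$.
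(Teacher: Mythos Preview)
Your urn construction has a genuine gap that prevents it from fitting the \Polya{} urn framework of \refS{generalpolya}. You propose one ball per node, with type recording the truncated fringe class and out-degree, and observe that when a new leaf is attached at $v$, the types of up to $K$ ancestors of $v$ also change. But in the urn model of \cite{Janson}, only the \emph{drawn} ball may be removed (condition (A1): $\xi_{ij}\ge0$ for $j\ne i$, $\xi_{ii}\ge-1$); you cannot alter balls that were not drawn. Changing an ancestor's type means removing a ball of one type and adding a ball of another, where the removed ball is not the one drawn --- this is forbidden. A second, related problem is that the out-degree of a node is unbounded when $\chi\ge0$, so your type space is infinite, and \refT{thm:normality} applies only to finite-type urns.

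The paper fixes both issues with a different decomposition. Instead of one ball per node, chop $\gL_n$ into a forest: erase every edge from a \emph{dead} node to its children, where a node is \emph{living} if its fringe subtree lies below some $\gL^i$ in the partial order $\preceq$, and dead otherwise. Each component of the forest is then either a living tree in the finite down-set $\cS'=\bigcup_i\{\gL:\gL\preceq\gL^i\}$, or a single dead node. When a new leaf is added, it lands in exactly one component, and only that ball is replaced --- so this \emph{is} a \Polya{} urn. The dead nodes still have unbounded out-degree, giving infinitely many dead types $*_k$ with activity $w_k$; the linearity $w_k=\chi k+\rho$ is exactly what lets one collapse them into a single special type $*$ of activity $1$, with the ``number'' of type-$*$ balls equal to the total dead weight (a real number in general, cf.\ \refR{Rreal}). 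When $*$ is drawn, one adds $\chi$ further balls of type $*$ and one leaf; the increment is independent of which dead node was hit, because $w_{k+1}-w_k=\chi$ for all $k$. This finite-reduction trick (\refS{orderpref}) is the heart of the argument. The eigenvalue analysis is then by induction on $|\cS'|$, starting from the two-type urn $\{\bullet,*\}$, and the new eigenvalue at each step is $-w_{T_i}$, the negative of the total weight of the added tree type; so all eigenvalues except $\gl_1=\chi+\rho$ are negative and the small-urn condition holds automatically. Your intuition about a triangular structure is correct, but the base of the induction is not the out-degree urn of \refS{degree}; it is this two-type urn.
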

 
Note that for the random recursive tree $ \gk=1 $ and for the plane oriented recursive tree $ \gk=\frac{1}{2} $.
\begin{rem}\label{branchjagerspreferential}
The proof shows also that
\begin{equation}\label{rpref}
\E(X^{\Lambda^{i}}_{n})
=\frac{\P(\Lambda_{k_i}=\Lambda^{i})\cdot\gk}{(k_i+\gk-1)(k_i+\gk)}\,n 
+O(1).
\end{equation}
A weaker version of \eqref{rpref} with the error term $o(n)$
follows from the branching process analysis of fringe subtrees, 
see \cite[(5.29) and Example 6.4, in particular (6.24)]{HJbranching}.  

The vector $ \hbmu$, and thus the coefficient of $n$ in \eqref{rpref}, 
can also be calculated from an eigenvector of the intensity matrix in the
proof; similarly, the covariance matrix $ \Sigma=
(\sigma_{ij})_{i,j=1}^d$ can be calculated explicitly from our proof.
\end{rem}

The following theorem is an important corollary of Theorem
\ref{recursivemulti}. The cases of the random recursive tree 
($\gk=1$) 
and binary search tree ($\gk=2$)
were proved in
\cite[Theorems~4 and~5]{Devroye1}
and the case of the plane oriented recursive tree 
($\gk=\frac12$) was proved in \cite[Theorem 1.1]{Fuchs2012}. 

\begin{thm}\label{variance2} Let $ k $ be an arbitrary fixed integer.
Let $ Y_{n,k} $ be the number of subtrees with $ k $ nodes
in the linear preferential attachment tree $ \Lambda_n $. Then, as 
$n\to\infty$, 
\begin{align}\label{prefksubtrees}
n^{-1/2}\bigpar{Y_{n,k}-\E Y_{n,k}}
&\dto \mathcal{N}(0,\sigma^{2}_k), 
 \end{align}
 where
$\sigma^{2}_k$ is some constant with $\sigma^2_k>0$. Furthermore, we also have
\begin{align}\label{rec}
n^{-1/2}\Bigpar{Y_{n,k}-\frac{\gk}{(k+\gk-1)(k+\gk)}\,n}
&\dto \mathcal{N}(0,\sigma^{2}_k), 
 \end{align}
 with $ \kappa $ as in \eqref{kappa}.
 \end{thm}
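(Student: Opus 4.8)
The plan is to deduce \refT{variance2} from \refT{recursivemulti} exactly as \refT{variance} is deduced from \refT{multivariate}, so I will only indicate the steps and flag where the preferential-attachment setting differs from the $m$-ary case. First I would enumerate the finitely many non-isomorphic (unordered, or ordered, as appropriate) trees with exactly $k$ nodes, say $\Lambda^1,\dots,\Lambda^d$; note $d<\infty$ since each such tree has $k$ nodes. By definition $Y_{n,k}=\sum_{i=1}^d X^{\Lambda^i}_n$, i.e.\ $Y_{n,k}=\langle\mathbf 1,\bZ_n\rangle$ with $\mathbf 1=(1,\dots,1)$. Applying \refT{recursivemulti} to this family and using the fact that a continuous linear image of a Gaussian vector is Gaussian, I get
\begin{equation*}
n\qqw\bigpar{Y_{n,k}-\E Y_{n,k}}=n\qqw\langle\mathbf 1,\bZ_n-\bmu_n\rangle\dto\N\bigpar{0,\sigma_k^2},\qquad \sigma_k^2:=\mathbf 1^{\mathsf T}\Sigma\,\mathbf 1,
\end{equation*}
which is \eqref{prefksubtrees}. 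For \eqref{rec}, since in \refT{recursivemulti} we may replace $\bmu_n$ by $n\hbmu$, the mean of $Y_{n,k}$ may be replaced by $n\langle\mathbf 1,\hbmu\rangle$; by \eqref{hmupref} this equals $n\,\gk\sum_{i=1}^d \P(\Lambda_{k}=\Lambda^i)/((k+\gk-1)(k+\gk))=n\gk/((k+\gk-1)(k+\gk))$, because $\sum_i\P(\Lambda_k=\Lambda^i)=1$ (the trees $\Lambda^i$ exhaust all isomorphism types on $k$ nodes, and $\Lambda_k$ has $k$ nodes almost surely). Here one uses $k_i=k$ for all $i$, so the denominators coincide and pull out of the sum — this is the key simplification that does not need any extra work beyond \refT{recursivemulti}, and it is cleaner than in the $m$-ary case because the coefficient depends on $k_i$ alone.

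The remaining point is the strict positivity $\sigma_k^2>0$, and this is the only place a genuine (small) argument is needed. The covariance matrix $\Sigma$ from \refT{recursivemulti} is non-singular — in the preferential-attachment case there is no degenerate tree analogous to ``a single external node'', so unlike \refT{variance} no exceptional case $k=0$ appears — hence $\Sigma$ is positive definite and $\sigma_k^2=\mathbf 1^{\mathsf T}\Sigma\mathbf 1>0$ since $\mathbf 1\neq 0$. One should just double-check the boundary value $k=1$: there $d=1$, $\Lambda^1$ is the single node, $\P(\Lambda_1=\Lambda^1)=1$, and $\hbmu=\bigpar{\gk/(\gk(\gk+1))}=1/(\gk+1)$; the statement still holds with $\sigma_1^2>0$ coming from the $1\times1$ non-singular $\Sigma$. (Intuitively $Y_{n,1}$ is the number of leaves, which genuinely fluctuates on scale $\sqrt n$.)

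I do not expect any real obstacle: everything is a direct corollary of \refT{recursivemulti} together with the Cram\'er–Wold/linear-image principle for Gaussians and the normalisation of the probabilities $\P(\Lambda_k=\Lambda^i)$. The only thing to be careful about is bookkeeping in the ``ordered versus unordered'' conventions — one must fix one interpretation (say unordered trees without the plane structure, or any of the three versions in \refR{Rpref0}) and use the matching notion of isomorphism consistently in counting the $\Lambda^i$, so that $\sum_i\P(\Lambda_k=\Lambda^i)=1$ really holds; but \refT{recursivemulti} is already stated to apply to all three versions, so this is automatic. Hence the proof is short: reduce to a linear functional of the vector in \refT{recursivemulti}, identify the asymptotic mean via $\sum_i\P(\Lambda_k=\Lambda^i)=1$, and read off positivity of $\sigma_k^2$ from non-singularity of $\Sigma$.
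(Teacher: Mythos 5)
Your proposal is correct and follows essentially the same route as the paper: write $Y_{n,k}=\sum_i X_n^{\Lambda^i}$ over the finitely many isomorphism types on $k$ nodes, apply \refT{recursivemulti} and a linear image of the Gaussian limit to get \eqref{prefksubtrees} with $\sigma_k^2=\mathbf 1'\Sigma\mathbf 1>0$ (using the unconditional non-singularity of $\Sigma$), and sum \eqref{rpref} using $\sum_i\P(\Lambda_k=\Lambda^i)=1$ for \eqref{rec}. The only cosmetic difference is that you spell out the simplification coming from $k_i=k$ for all $i$; this is exactly what the paper's one-line ``summing \eqref{rpref}'' does.
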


\begin{rem}
It follows from \eqref{rpref}, see the proof, that 
\begin{equation}\label{obliquuspref}
\E\bigpar{Y_{n,k}}
=\frac{\gk}{(k+\gk-1)(k+\gk)}\,n +O(1).
\end{equation}
 The constant $\sigma^{2}_k$ can again be calculated explicitly from our
 proof.  
\end{rem}
We give one example in Section \ref{ex3}, where we let $ k=3 $.

\section{Generalised P\'olya urns}\label{generalpolya}

A (generalised) \Polya{} urn process is defined as follows,
see e.g.\ \cite{Janson} or \cite{Mahmoud:Polya}.
There are balls of $ q $ types (or colours) $ 1,\dots, q $, and
for each $ n $ a random vector $ \mathcal{X}_n=(X_{n,1},\dots,X_{n,q}) $,
where $ 
X_{n,i} $ is the number of balls of type  
$ i $ in the urn at time $ n $. The urn starts with a given vector 
$\cX_0$. For each type $ i $, there is an activity (or weight) 
$ a_i \in \mathbb{R}_{\ge0} $, and 
a random vector $ \xi_i=(\xi_{i1},\dots, \xi_{iq}) $. The urn evolves
according to a discrete time Markov process. At each time $ n\geq 1 $, 
one ball is drawn at random from the urn, with the probability of any ball
proportional to its activity. Thus, the drawn ball has  type $ i$  
with probability $ \frac{a_iX_{n-1,i}}{\sum_{j}a_jX_{n-1,j}} $. 
If the drawn ball has type $i$, it is replaced
together with $\Delta X_{n,j}\iii$ balls of type $j$, $j=1,\dots,n$,
where the random vector 
$ \Delta \cX_{n}\iii=(\Delta X_{n,1}\iii,\dots, \Delta X_{n,q}\iii) $
has the same distribution as $ \xi_i$ and is
independent of everything else that has happened so far.  
We allow $\Delta X_{n,i}\iii=-1$, which means that the drawn ball is
  \emph{not} replaced.

Usually, the random variables $X_{n,i}$ and $\xi_{ij}$ are integer-valued,
with $X_{n,i}\ge0$, in accordance with the interpretation as numbers of
balls; we assume this unless we explicitly make an exception.
However, see \refR{Rreal} for an extension, which will be used in
\refS{prefpolya}.

The \emph{intensity matrix} of the \Polya{} urn is
the $ q\times q $  matrix
\begin{equation}\label{A}
A:=(a_j\E\xi_{ji})_{i,j=1}^{q}.   
\end{equation}
The intensity matrix $ A $ with its eigenvalues and
eigenvectors is central for proving limit theorems. 

The basic assumptions in \cite{Janson} are the following.
We say that a type $i$ is \emph{dominating}, if 
every other type $j$ can be found 
with positive probability
at some time in an urn started with a
single ball of type $i$. The urn (and its matrix $A$) is \emph{irreducible} if every type is dominating. 
\begin{itemize}
\item[(A1)]
$ \xi_{ij}\geq 0 $ for $ j\neq i $ 
and $\xi_{ii} \geq -1$.
(I.e.,  the drawn ball may be removed, but no other ball.)
\item[(A2)]
$ \E(\xi_{ij}^{2})<\infty $ for all $ i,j\in\{1,\dots,q\} $.
\item [(A3)] 
The largest real eigenvalue $ \lambda_1 $ of $ A $ is positive.
\item [(A4)]
The largest real eigenvalue $ \lambda_1 $ is simple.
\item [(A5)] 
There exists a dominating type $ i $ with $ X_{0,i}>0 $, i.e., we start with
at least one ball of a dominating type.
\item [(A6)]
$ \lambda_1 $ is an eigenvalue of the submatrix of $A$ given by the
  dominating types.
\end{itemize}
We will also use the following simplifying assumption.
\begin{itemize}
  \item [(A7)]
At each time $ n\geq 1 $, there exists a ball of a dominating type.
\end{itemize}

Before stating the results that we use, we need some notation. 
By a vector $ v $ we
mean a column vector, and we write $ v' $ for its transpose (a row vector).  
More generally, we denote the transpose of a matrix $ A $ by $ A' $. By an
eigenvector of $ A $ we mean a right eigenvector;
a left eigenvector is the same as the transpose of an
eigenvector of the matrix $ A' $. If $ u $ and $ v $ are vectors then $ u'v
$ is a scalar while $ uv' $ is a $ q\times q $ matrix of rank 1. We also use the
notation $ u\cdot v $ for $ u'v $. 
We let $ \lambda_1 $ denote the largest real eigenvalue of $A$. (This exists by our assumptions and the Perron--Frobenius theorem.)
Let $ a=(a_1,\dots,a_q) $ denote the
(column) vector of activities, and let $ u_1' $ and $ v_1 $ denote left and
right eigenvectors of $A$ corresponding to the 
largest eigenvalue $ \lambda_1 $, i.e., vectors satisfying   
\begin{align*}
u_1'A=\lambda_1 u_1',&&& Av_1=\lambda_1 v_1.  
\end{align*}
We assume that $ v_1 $ and $ u_1 $ are normalised so
that 
\begin{align}\label{normalised} a\cdot v_1=a'v_1=v_1'a=1,
&&&
u_1\cdot  v_1=u_1'v_1=v_1'u_1=1,  
\end{align} see \cite[equations (2.2)--(2.3)]{Janson}. 
We write $ v_1=(v_{11},\dots,v_{1q}) $.

We define   
$$P_{\lambda_1} =v_1u_1', $$ and $ P_{I}=I_{q}-P_{\lambda_1} $, where $
I_{q} $ is the $ q\times q $ identity matrix. 
(Thus $ P_{\lambda_1} $ is the one-dimensional projection onto the eigenspace
corresponding to $\gl_1$ 
such that $P_{\gl_1}$ commutes with the matrix $ A $, 
see \cite[equation (2.5)]{Janson}; note that $ P_{\gl_1} $ typically is not
orthogonal).  
We define the matrices 
\begin{align}
\label{Bi} B_i &:=\E(\xi_i\xi_i'),
\\\label{B}
B&:=\sum_{i=1}^{q}v_{1i}a_iB_i,\\
\label{Sigma}
\Sigma_I&:=\int_{0}^{\infty}P_{I}e^{sA}Be^{sA^{'}}P_{I}'e^{-\lambda_1s}ds,
\end{align}
where we recall that $ e^{tA}=\sum_{j=0} ^{\infty}\xfrac{t^{j}A^{j}}{j!}$.
It follows from \cite{Janson}, see also \cite{JansonMean}, that when
$\Re\lambda<\lambda_1/2 $,  
the matrix-valued integral $ \Sigma_I $ in \eqref{Sigma} is absolutely
convergent. 

It is proved in \cite{Janson} that, under assumptions (A1)--(A7),
$\mathcal{X}_n$ is 
asymptotically normal if
$\Re\lambda\le\lambda_1/2 $ for each eigenvalue $ \lambda\neq \lambda_1 $;
more precisely, if 
$\Re\lambda<\lambda_1/2 $ for each such $ \lambda$,
then 
$n^{-1/2} (\mathcal{X}_n-n\mu)\dto \N(0,\Sigma)$ for some $\mu=(\mu_1,\dots, \mu_k)$ and
$\Sigma=(\sigma_{i,j})_{i,j=1}^{k}$. 
(If $ \Re\lambda=\lambda_1/2 $ for some eigenvalue $\gl$, 
then $\mathcal{X}_n$ is still asymptotically normal,
however with another normalisation.) 
The asymptotic covariance matrix $\Sigma$ may be calculated in
different ways; 
we refer to \cite[Theorem 3.22]{Janson} for a general formula,
but we will instead use two simpler formulas that
apply under (different) additional assumptions; see further 
\cite[Section 5]{Janson}.

\begin{thm}[{\cite[Theorem 3.22 and Lemmas 5.4 and 5.3(i)]{Janson}}]
\label{thm:normality}
Assume \textup{(A1)--(A7)} and that  we have normalised as in
\eqref{normalised}. 
Also assume that\/ $\Re\lambda<\lambda_1/2 $,
for each eigenvalue $ \lambda\neq \lambda_1 $.
  \begin{romenumerate}[-10pt]
  \item \label{T0a}
Then, as $ n\to\infty $, 
\begin{equation}\label{t0a}
n^{-1/2} (\mathcal{X}_n-n\mu)\dto \N(0,\Sigma),  
\end{equation}
with 
$\mu=\gl_1v_1$ and some
covariance matrix $ \Sigma $.
  \item \label{T0b}
Suppose further that, for some $ c>0$,
\begin{equation}\label{acdot}
a\cdot\E(\xi_i)=c,   
\qquad i=1,\dots,q.
\end{equation}
Then the
covariance matrix $ \Sigma = c\Sigma_I$, with $ \Sigma_I $
as in \eqref{Sigma}. 
  \item \label{T0c}
Suppose that 
\eqref{acdot} holds and that 
the matrix $ A $ is
diagonalisable, and let
$ \{u_i'\}_{i=1}^{q} $ and $\{v_i\}_{i=1}^{q} $ are dual bases of left
and right eigenvectors, respectively, i.e., $ u_i'A=\lambda_iu_i' $,
 $Av_i=\lambda_iv_i $ and $ u_i\cdot v_j= \delta_{ij} $. 
Then, the covariance matrix in \ref{T0a} is given by,
with the matrix $ B $ as in \eqref{B},
\begin{align}\label{simpleSigma}
\Sigma=c\sum_{j,k=2}^{q} 
\frac{u_j'Bu_k}{\lambda_1-\lambda_j-\lambda_k}v_jv_k'.
\end{align}  
  \end{romenumerate}
\vskip-\baselineskip
\qed
\end{thm}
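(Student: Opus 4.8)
The proof is the one given in \cite{Janson} (combining its Theorem~3.22 with Lemmas~5.3(i) and~5.4); we outline its structure. The plan is to pass from the discrete-time urn to continuous time by the classical branching-process embedding, prove a central limit theorem there, and transfer it back by de-Poissonization. For the embedding, give each ball of type $i$ an independent $\Exp(a_i)$ clock and, when a clock rings, replace that ball by an independent copy of $\xi_i$; this produces a multitype Markov branching process $(\mathcal X(t))_{t\ge0}$ with $\mathcal X(0)=\mathcal X_0$ such that, if $\tau_1<\tau_2<\cdots$ are the successive ringing times, then $\mathcal X(\tau_n)$ has the distribution of the discrete urn after $n$ draws (by (A7) the $\tau_n$ are a.s.\ finite). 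Since the infinitesimal drift of $\mathcal X(t)$ is $A\mathcal X(t)$, we have $\E\mathcal X(t)=e^{tA}\mathcal X_0$, and $M(t):=e^{-tA}\mathcal X(t)-\mathcal X_0$ is a vector-valued martingale.

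The next step is the asymptotics of $\mathcal X(t)$. Because $u_1'e^{-tA}=e^{-\lambda_1 t}u_1'$, the scalar process $u_1'M(t)=e^{-\lambda_1 t}u_1'\mathcal X(t)-u_1'\mathcal X_0$ is a martingale, and (A2) makes it $L^2$-bounded; hence $e^{-\lambda_1 t}u_1'\mathcal X(t)$ converges a.s., and irreducibility together with (A5)--(A6) upgrades this to $e^{-\lambda_1 t}\mathcal X(t)\to Wv_1$ a.s.\ for some scalar $W\ge0$. For the fluctuations one applies a martingale central limit theorem to $P_IM(t)$, the component transverse to $v_1$, with normalisation of order $e^{-\lambda_1 t/2}$: its predictable quadratic variation is $\int_0^t e^{-sA}\bigpar{\sum_i a_iX_i(s)B_i}e^{-sA'}\dd s$, and inserting the a.s.\ asymptotics $X_i(s)\sim v_{1i}We^{\lambda_1 s}$ together with $B=\sum_i v_{1i}a_iB_i$ shows that, appropriately conjugated and rescaled, it converges a.s.\ to $W\Sigma_I$. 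Here the hypothesis $\Re\lambda<\lambda_1/2$ for $\lambda\ne\lambda_1$ plays a double role: it is exactly the condition under which the integral defining $\Sigma_I$ in \eqref{Sigma} converges, and it makes the transverse fluctuation dominate (for $\Re\lambda=\lambda_1/2$ an extra logarithmic factor appears in the normalisation, and for $\Re\lambda>\lambda_1/2$ the limit is not Gaussian). The conditional Lindeberg condition is immediate from (A2) because single jumps are $O(1)$. Conditionally on $W$, this gives that $e^{-\lambda_1 t/2}\bigpar{\mathcal X(t)-We^{\lambda_1 t}v_1}$ is asymptotically centred Gaussian with covariance proportional to $W$.

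The de-Poissonization step, which I expect to be the main obstacle, transfers this to the discrete urn. The number of rings up to time $t$ has drift $a\cdot\mathcal X(t)\sim We^{\lambda_1 t}$, so $e^{\lambda_1\tau_n}\sim\lambda_1 n/W$ a.s.; one must show that $\tau_n$ is concentrated about a deterministic value tightly enough that evaluating the branching process at the random time $\tau_n$ instead of at its mean contributes only in the direction $v_1$ and in a controlled way, and that the random factor $\sqrt W$ arising from $e^{\lambda_1\tau_n/2}$ cancels the $\sqrt W$ in the conditional Gaussian covariance, so that the resulting discrete-time covariance $\Sigma$ is deterministic. Doing this requires the branching-process asymptotics \emph{with error estimates}, not just the leading order, and this is the technical heart of the argument. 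Combined with the identity $\sum_i a_iv_{1i}\E\xi_i=\lambda_1v_1$ (equivalently $Av_1=\lambda_1v_1$), which says that each draw adds $\lambda_1v_1$ in the mean, one obtains $n^{-1/2}(\mathcal X_n-n\lambda_1v_1)\dto\N(0,\Sigma)$, that is, \ref{T0a} with $\mu=\lambda_1v_1$.

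Finally, parts \ref{T0b} and \ref{T0c} follow by feeding in the extra hypothesis \eqref{acdot}. A direct computation from the definition \eqref{A} of $A$ gives $a'A=ca'$, so $a$ is a left eigenvector for the largest eigenvalue; hence $\lambda_1=c$ and, after imposing \eqref{normalised}, $u_1=a$, whence $P_{\lambda_1}=v_1a'$ and $P_Ia=0$. This is precisely what kills the direction-$v_1$ de-Poissonization correction of the previous step, leaving $\Sigma=c\Sigma_I$ and proving \ref{T0b}. For \ref{T0c}, when $A$ is diagonalisable we may write $e^{sA}=\sum_j e^{\lambda_j s}v_ju_j'$ and $e^{sA'}=\sum_k e^{\lambda_k s}u_kv_k'$; since $P_I$ annihilates the $j=1$ and $k=1$ terms, substituting into \eqref{Sigma} and integrating $\int_0^\infty e^{(\lambda_j+\lambda_k-\lambda_1)s}\dd s=(\lambda_1-\lambda_j-\lambda_k)^{-1}$ term by term yields \eqref{simpleSigma}, with $B$ as in \eqref{B} — a routine calculation.
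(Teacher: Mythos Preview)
The paper does not prove this theorem: it is quoted from \cite{Janson} and ends immediately with \qed, so there is no in-paper argument to compare your outline against. Your sketch is a faithful summary of the proof strategy in \cite{Janson} (continuous-time embedding, martingale CLT for the transverse part with $\Sigma_I$ arising as the limit of the predictable quadratic variation, de-Poissonization, and then the algebraic simplifications under \eqref{acdot} and diagonalisability), and it correctly identifies where the hypothesis $\Re\lambda<\lambda_1/2$ enters.
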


\begin{rem}\label{Racdot}
It is easily seen that \eqref{acdot} implies that
$\gl_1=c$ and $u_1=a$, see e.g.\ \cite[Lemma 5.4]{Janson}.
\end{rem}

\begin{rem}\label{Rmom}
  From \eqref{t0a} follows immediately a weak law of large numbers:
  \begin{equation}\label{lln}
	\cX_n/n\pto \mu.
  \end{equation}
In fact, the corresponding strong law $	\cX_n/n\asto \mu$ holds as well,
see \cite[Theorem 3.21]{Janson}.
It follows that corresponding strong law of large numbers holds for all
theorems in \refS{main}.

Furthermore, 
in all applications in the present paper,  all $\xi_{ij}$ are bounded and
thus each $X_{n,i}\le Cn$ for some deterministic constant;
hence \eqref{lln} implies by dominated convergence
that also the means converge:
  \begin{equation}\label{means}
	\E\cX_n/n\to \mu.
  \end{equation}
(In fact, this holds in general, without assuming that $\xi_{ij}$ are bounded, 
  since it is easy to see that (A2)  implies that $X_{n,i}/n$ are uniformly
integrable, which together with \eqref{lln} yields \eqref{means},
see e.g.\ \cite[Theorem 5.5.4]{Gut}.)

Moreover,
in all applications in the present paper,  $a\cdot\xi_i=c$ for some $c$ and
every $i$ (a stronger version of \eqref{acdot}), and then \eqref{means} can
be improved, with an explicit rate of convergence, see \cite{JansonMean}.
\end{rem}

\begin{rem}\label{Rreal}
It has been noted several times that the \Polya{} urn process
is also well-defined for \emph{real-valued} 
$X_{ni}$ and $\xi_{ij}$, 
see \eg{} \cite[Remark 4.2]{Janson}, \cite[Remark 1.11]{Janson2006} and
\cite{Pouyanne2008}  
(\cf{} also \cite{Jirina} for the related case of branching processes);
the ``number of balls'' $X_{ni}$ may thus be any non-negative real number.
(This can be interpreted as an urn containing a certain amount (mass) of each
type, rather than discrete balls.)
In this version,  Condition (A1) 
is replaced by the more general:
\begin{itemize}
\item[(\aii)]
For each $i$, either $\xi_{ii}\ge0$ (a.s.), 
or there is a real number $d_i>0$ such
that $X_{0,i}$ and every $\xi_{ji}$ is divisible by $d_i$ (a.s.), and
furthermore $\xi_{ii}\ge -d_i$.
Moreover, in both cases, $\xi_{ij}\ge0$ when $i\neq j$.
\end{itemize}
(Note that (A1), with all variables integer-valued,
is the case $d_i=1$ for every $i$.)
\refT{thm:normality} holds for this version too,
see \cite[Remark 4.2]{Janson}.
(The extra assumptions used there are easy to verify when (\aii) holds
together with (A2)--(A7).)
\end{rem}

In the \Polya{} urns used in this paper,
it is immediately verified that (A1), or at least (\aii), holds, and also (A2).
Furthermore,
it is easily seen (from the 
definitions using trees) that every type with positive activity is
dominating.  If we remove rows and 
columns corresponding to the types with activity 0 from $A$, then 
 the removed columns are identically 0, so the set of non-zero eigenvalues
 of $A$ is not changed. The remaining matrix is irreducible, and using
the Perron--Frobenius theorem, it is easy to verify 
all conditions (A3)--(A6), see \cite[Lemma 2.1]{Janson}. Furthermore,
in our urns there will always be a ball of positive activity,
so essential extinction is impossible and (A7) holds.
Hence, \refT{thm:normality} applies.

\section{\Polya{} urns to count fringe subtrees in random $ m $-ary search trees}\label{polyafringe}

In this section we describe the \Polya {} urns that we will use in the
analysis of fringe subtrees to prove Theorem \ref{multivariate} and Theorem
\ref{variance} for \mst{s}.
The definitions apply to all interpretations of the trees (ordered/unordered, labeled/unlabeled), see Remark \ref{R0}.


Let 
$T^1,\dots,T^d$ be a fixed sequence of 
(non-random) $ m $-ary search trees and let
$\bX_{n}=(X^{T^{1}}_{n},X^{T^{2}}_{n},\dots,X^{T^{d}}_{n}),$ where 
$X^{T^{i}}_{n}$ is the number of fringe subtrees 
in $ \mathcal{T}_{n} $
that are isomorphic to
$T^{i}$. 
We may assume
that at least one tree $T^i$ contains at least $m-2$
keys. (Otherwise we simply add one such tree to the sequence.)

We define a partial order on the set of (isomorphism classes of)
non-random $ m $-ary search trees, such that $T \preceq T'$  if $T'$ 
can be obtained from $T$ by adding keys (including the case $T'=T$).
 Of course, the order depends on the definition of isomorphism (ordered, unordered, labelled) one considers. 

Assume that we have a given $ m $-ary search tree $\cT_n$
together with its external nodes. 
Denote the fringe subtree of $\cT_n$ rooted at a node $v$ by $\cT_n(v)$.
We say that a node $v$ is \emph{living} if 
$ \cT_n(v)\preceq T^{i}$  for some $ i\in \{1,\dots,d\} $,
i.e., if $\cT_n(v)$ is isomorphic to some $T^i$ or can be grown to
become one of them  by adding more keys. 
Note that this includes all external nodes and all leaves with at most $ m-2 $  keys (by the assumption
above). 
Furthermore, we let all descendants of a living node be living.
All other nodes are \emph{dead}.

Now erase all edges from dead nodes to their children.
This yields a forest of small trees, 
where each tree either consists of a single dead node or 
is living (meaning that all nodes are living) and can be grown to become one
of the $T^i$. 
We regard these small
trees as the balls in our generalised \Polya{} urn.
Hence, the types in this \Polya{} urn are all (isomorphism types of) 
non-random $ m $-ary search trees $T$ such that 
$ T\preceq T^{i}$  for some $ i\in \{1,\dots,d\} $,
plus
one dead type.
We denote the set of living types by
\begin{equation}\label{cS}
  \cS:=\bigcup_{i=1}^d\set{T:T\preceq T^i},
\end{equation}
and the set of all types by $\cSS:=\cS\cup\set{\dead}$, where $\dead$ is the
dead type. 
(The set $\cS$ is thus a down-set for the partial order
$\preceq$. Conversely, any finite down-set  
occurs as $\cS$,
provided it contains all trees with a single node and thus $\le m-2$ keys;  
we may simply let $T^1,\dots,T^d$ be the trees
in $\cS$.)

When a key is added to the tree $\cT_n$, 
it is added to a leaf with at most $ m-2 $ keys or to an external node,
and thus to one of the living subtrees in the forest just described.
If the root of that subtree still is living after the addition, then that 
subtree becomes a living subtree of a different type; 
if the root becomes dead, then the subtree
is further decomposed into one or several dead nodes and 
several (at least $m$)
living subtrees. In any case, the transformation does not depend on anything
outside the subtree where the key is added.
The random evolution of the forest obtained by decomposing $\cT_n$ is thus
described by a \Polya{} urn with the types $\cSS$, where each type has
activity equal to its number of gaps, and certain transition rules that in
general are random, since the way a subtree is decomposed (or perhaps not
decomposed) typically depends on where the new key is added.

Note that dead balls have activity 0; hence we can ignore them and consider
only the living types (i.e., the types in $\cS$) 
and we will still have a \Polya{} urn.
The number of dead balls can be recovered from the numbers of balls of other types if it is desired, since the total number of keys is non-random and each dead ball contains $m-1$ keys.

Let $X_{n,T}$ be the number of balls of type $T$ in the 
\Polya{} urn, for $T\in\cS$.
The trees $T^i$ that we want to count correspond to different types in the
\Polya{} urn, but they may also appear as subtrees of larger living trees.
Hence, if
$n(T,T')$ denotes the number of fringe subtrees in $T$ that are
isomorphic to $T'$, then $X^{T_i}_n$ is the linear combination
\begin{equation}\label{xy}
  X^{T^i}_n=\sum_{T\in\cS} n(T,T^i) X_{n,T}.
\end{equation}

The strategy to prove Theorem \ref{multivariate} should now be obvious. We
verify that the \Polya{} urn satisfies the conditions of 
\refT{thm:normality} (this is done in \refS{proofs}); 
then that theorem yields asymptotic normality of
the vectors $(X_{n,T})_{T\in \cS}$, and then asymptotic normality of 
$(X_n^{T^1},\dots,X_n^{T^d})$ follows from \eqref{xy}.

\begin{example}[a \Polya{} urn to count fringe subtrees with $  k $ keys]
\label{fringektrees}
As an important example, let us consider the problem of finding the
distribution of the number of fringe subtrees with a given number of keys,
as in \refT{variance}.
In this case, the order of children
in the tree does not matter so it is easier to regard the
trees as unordered.

So, fix $k\ge m-2$ and let
$ T^{i} $, $ i\in \{1,\dots,d\} $, be the sequence of all $ m $-ary search
trees with at most $ k $ keys. 
This is a down-set, so \eqref{cS} simply yields $\cS=\set{T^i:1\le i\le d}$.
We ignore the dead nodes and consider the urn with only the living types
$\cS$.

In the decomposition of an \mst{} constructed above, a node $v$ is living if and
only if the fringe subtree rooted at $v$ has at most $k$ keys. Hence the 
decomposition consists of all maximal fringe subtrees with at most $k$ keys,
plus dead nodes.

The replacement rules in the \Polya{} urn are easy to describe.
A type $T$ with $j$ keys has $j+1$ gaps, and thus has 
activity $j+1$.
Suppose we draw a ball of type $T$ and $T_1,\dots,T_{j+1}$ are the trees
that can be obtained by adding a key
to one of these gaps in $T$. (Some of $T_i$'s may be equal.)
If $j<k$, then each $T_i$ has at most $k$ keys and is itself a type in the urn, so the drawn ball is replaced by one ball of a type chosen uniformly at random among
$T_1,\dots,T_{j+1}$. On the other hand, if $j=k$, then each $T_i$ has $k+1$ keys
and thus has a dead root; the root contains $m-1$ keys, so after removing it we
are left with $m$ subtrees that together contain $k+1-(m-1)\le k$ keys; hence these subtrees are all living and the decomposition stops there. Consequently,
when $j=k$, the drawn ball is replaced by $m$ balls of the types obtained by
choosing one of $T_1,\dots,T_{k+1}$ uniformly at random and then removing its root.

To find the number of fringe subtrees with  $k$ keys, we sum the numbers
$X_{n,T}$ of balls of type $T$ in the urn, for all types $T$ with exactly
$k$ keys. Note that similarly, using \eqref{xy},
we may obtain the number of fringe subtrees with
$\ell$ keys, for any $\ell\le k$, from the same urn.
This enables us to obtain joint convergence in \refT{variance} for several
different $k$, with asymptotic covariances that can be computed from this urn.

Note that for $k=m-2$, the urn described here consists of $m-1$ types,
viz.\ a single node with $i-1$ keys for $i=1,\dots,m-1$.
This urn has earlier been used
in \cite{Mahmoud2,Janson,Mahmoud:Polya} to study the number of nodes, 
and the numbers of nodes with different numbers of keys,
in an \mst. 
\end{example}

In Section \ref{ex1} we give an example with $ m=3 $ and $ k=4 $;
in that case there are  6 different (living) types in the \Polya{} urn.

\begin{rem}\label{Racdotmary}
The types described by the \Polya{} urns above all
have activities equal to the total number of gaps in the type. 
Since the total number of gaps increases by 1 in each step, 
we have $a\cdot\xi_i=1$ for every $i$, deterministically; 
in particular, \eqref{acdot} holds with $c=1$.
Hence, $\gl_1=1$ and $u=a$ by \refR{Racdot}. 
\end{rem}

\begin{rem}
  In the \Polya{} urns above, a type that is a tree $T$ with $g$ gaps ($g-1$
  keys) has activity $g$, and if a ball of that type is drawn, each gap
  is chosen with probability $1/g$ for the addition of a new key.
Each gap in $T$ thus gives a contribution with weight $g/g=1$ 
to the corresponding column in the intensity matrix $A$ in \eqref{A}.
\end{rem}

\section{Proofs of Theorem \ref{multivariate} and Theorem \ref{variance}}\label{proofs}

As said in \refS{generalpolya}, it is easy to see 
(with the help of \cite[Lemma 2.1]{Janson}, for example) 
that the \Polya{} urns
constructed in 
\refS{polyafringe} 
satisfy (A1)--(A7). 
To apply Theorem \ref{thm:normality} 
it remains to show that\/
$\Re\lambda<\lambda_1/2 $ 
for each eigenvalue $ \lambda\neq \lambda_1 $. We will find the eigenvalues
of $ A $ by using induction on the size of $\cS$, 
the set of (living) types.
For definiteness we consider the version with ordered unlabelled trees; the
versions with unordered trees or labelled trees are the same up to minor
differences that are left to the reader.

Note that there is exactly one type that has activity $ j $ for every 
$ j \in\set{1,\dots, m-1}$. 
(These correspond to the nodes holding $ j-1 $ keys.) 
These types are the $m-1$ smallest in the partial order $\preceq$, and they
always belong to the set $\cS$  constructed 
in \refS{polyafringe}.

Let $q:=|\cS|$ be the number of types in $\cS$, and 
choose a numbering $T_1,\dots,T_q$ 
of these $ q $ types that is compatible with the partial
order $\preceq$. 
For $k\le q$, let
\begin{equation}
\Fc_k := \left\{ T_1, \dots, T_k \right\}, 
\end{equation} 
and note that this is a down-set for $\preceq$.
For $k\ge m-1$, we may thus consider the 
\Polya{} urn with the $k$ types in $\Fc_k$ constructed as in
\refS{polyafringe}. Note that this corresponds to decomposing $\cT_n$ into a
forest with all components in $\Fc_k\cup\set*$.
Furthermore, let $ \mathcal{X}_n^{k}:=(X^k_{n,1},\dots,X^k_{n,k}) $, where 
$X^k_{n,i} $ is the number of balls of type  
$T_i$ in the urn with types $\Fc_k$
at time $ n $ and let $ A_k $ be the intensity matrix of this \Polya{} urn.
Thus $A=A_q$.

First let us take a look at the diagonal values $\xi_{ii}$. 
\begin{prop} \label{prop:gaps}
  \begin{thmenumerate}
  \item 
Let $m \ge 3$ and $m-1\le k\le q$.
Then $(A_k)_{ii} = -a_i$
for every type $i = 1, \dots, k$.
Hence the trace satisfies
\begin{equation}\label{trace>2}
  \tr(A_k)=-\sum_{i=1}^k a_{i}.
\end{equation}

  \item 
Let $m =2$ and $1\le k\le q$.
Then $(A_k)_{ii} = -a_i$
for every type $i = 1, \dots, k$, except for the cases
when $T_i$ is the longest left path in $\Fc_k$ 
or the longest right path in $\Fc_k$.
If $k\ge3$ these two exceptional cases are distinct, and 
$(A_k)_{ii} =-a_i+1$ for them;
if $k=1$ or $k=2$, then the exceptional cases coincide and 
$(A_k)_{ii} = -a_i+2$ for the single exceptional case.
Consequently, for any $k\ge1$,
the trace satisfies
\begin{equation}\label{trace=2}
  \tr(A_k)=2-\sum_{i=1}^k a_{i}.
\end{equation}
  \end{thmenumerate}
\end{prop}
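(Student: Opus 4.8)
The plan is to read the diagonal entries $(A_k)_{ii}=a_i\,\E\xi_{ii}$ directly off the replacement rule of the urn constructed in \refS{polyafringe}. When a ball of a living type $T_i$ is drawn, a uniformly random one of its $a_i$ gaps, say $g$, is chosen, a key is inserted there to produce the tree $T_i\oplus g$, and the ball is replaced by the pieces into which $T_i\oplus g$ decomposes (this is the single piece $T_i\oplus g$ if $T_i\oplus g\in\Fc_k$, and otherwise $T_i\oplus g$ has a dead root and we recurse into its children, as in \refS{polyafringe}). Writing $N_i(g)$ for the number of those pieces that are isomorphic to $T_i$, we get $\xi_{ii}=-1+N_i$ with $N_i\eqd N_i(g)$ for a uniform gap $g$, so
\[
  (A_k)_{ii}=a_i\,\E\xi_{ii}=-a_i+\sum_{g}N_i(g),
\]
the sum running over the $a_i$ gaps of $T_i$. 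Everything reduces to computing $\sum_gN_i(g)$.

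The first step is a structural observation. Put $S:=T_i\oplus g$ and suppose a piece of the decomposition of $S$ is isomorphic to $T_i$. Since $S\in\Fc_k$ would make that piece equal $S=T_i\oplus g\neq T_i$, we must have $S\notin\Fc_k$, so the root of $S$ is dead, and the piece is the fringe subtree $S(v)$ for some node $v$ that is living while its parent $p$ is dead; in particular $v$ is not the root and $p$ is internal, hence full, hence carries $m-1$ keys. But $S$ has exactly one more key than $T_i$, so the nodes of $S$ outside $S(v)$ — which include $p$ — carry only one key between them. Therefore $m-1\le1$, i.e.\ $m=2$; moreover $p$ is then the \emph{only} internal node outside $S(v)$, which forces $p$ to be the root of $S$ (if it were not, its own parent would be another internal node outside $S(v)$), and forces every other child of $p$ to be a bare external node. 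This already gives (i): when $m\ge3$ no such piece exists, hence $N_i(g)=0$ for all $i$ and $g$, so $(A_k)_{ii}=-a_i$, and summing over $i$ yields \eqref{trace>2}.

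Now let $m=2$. By the observation, $N_i(g)>0$ forces $T_i\oplus g$ to be a one-key root one of whose child-subtrees is $T_i$ and whose other child is a bare external node; then $N_i(g)=1$, except that if $T_i$ is itself the external node then $T_i\oplus g$ is the cherry, whose two external children both equal $T_i$, so $N_i(g)=2$. Since inserting a key into a nonempty binary search tree leaves its root unchanged and alters only one of the two subtrees below the root, an immediate induction on the number of keys shows that the trees $T_i$ admitting such a gap $g$ are exactly the left paths $P^{\mathrm L}_j$ and right paths $P^{\mathrm R}_j$, $j\ge0$ (with $P^{\mathrm L}_0=P^{\mathrm R}_0$ the external node and $P^{\mathrm L}_1=P^{\mathrm R}_1$ the cherry), that for $T_i=P^{\mathrm L}_j$ the witnessing gap is the one at the far left tip with $T_i\oplus g=P^{\mathrm L}_{j+1}$, and symmetrically for right paths (the cherry being witnessed from both tips), and that $T_i\oplus g$ is then actually decomposed, recovering a copy of $T_i$, precisely when $T_i\oplus g\notin\Fc_k$. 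Since $P^{\mathrm L}_0\prec P^{\mathrm L}_1\prec\cdots$ is a chain (as are the right paths), the finite down-set $\Fc_k$ has a longest left path $P^{\mathrm L}_\ell$ and a longest right path $P^{\mathrm R}_r$, and collecting contributions, for $T_i$ ranging over the types in $\Fc_k$,
\[
  \sum_g N_i(g)=
  \begin{cases}
    2\,\I{k=1}, & T_i=\text{external node},\\
    \I{\ell=1}+\I{r=1}, & T_i=\text{cherry},\\
    \I{j=\ell}, & T_i=P^{\mathrm L}_j,\ j\ge2,\\
    \I{j=r}, & T_i=P^{\mathrm R}_j,\ j\ge2,\\
    0, & \text{otherwise,}
  \end{cases}
\]
using that $P^{\mathrm L}_{j+1}\notin\Fc_k$ iff $j=\ell$ (given $P^{\mathrm L}_j\in\Fc_k$), and similarly for right paths. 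Finally I would split on $k$. If $k=1$, then $\Fc_1=\{\text{external node}\}$, which is simultaneously the longest left and right path, with extra term $2$. If $k=2$, then $\Fc_2=\{\text{external node},\text{cherry}\}$ and the cherry $=P^{\mathrm L}_1=P^{\mathrm R}_1$ is the one exceptional type, with extra term $\I{\ell=1}+\I{r=1}=2$. If $k\ge3$, then $\Fc_k$, a down-set with at least three elements, contains one of the two two-key trees, so $\max(\ell,r)\ge2$ and hence $P^{\mathrm L}_\ell\ne P^{\mathrm R}_r$; these are the only two exceptional types, each with extra term $1$ (if $\ell=1$, i.e.\ the cherry is the longest left path, then $r\ge2$, so the cherry's extra term $\I{\ell=1}+\I{r=1}$ is exactly $1$, and symmetrically when $r=1$). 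In every case the extra terms total $2$, so summing the displayed identity for $(A_k)_{ii}$ over $i$ gives \eqref{trace=2}.

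I expect the main obstacle to be the structural observation of the second paragraph, and in particular the clean deduction that a re-created copy of $T_i$ must hang directly from the root of $T_i\oplus g$ — this rests on combining the count ``$T_i\oplus g$ has one more key than $T_i$'' with the fact that a dead node never carries a fringe subtree in $\Fc_k$, and it is exactly this that pins $m$ down to $2$ in case (ii). After that, the only delicate points are the degenerate cases $k=1,2$ and $T_i=\text{external node}$, where ``longest left path'' and ``longest right path'' collapse to one tree and the contribution jumps from $1$ to $2$; the rest is bookkeeping about left and right spines.
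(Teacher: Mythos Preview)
Your proof is correct and follows essentially the same approach as the paper's. Both arguments hinge on the same key-counting observation: a living piece $S(v)\cong T_i$ in the decomposition of $S=T_i\oplus g$ forces the nodes outside $S(v)$ to carry a single key between them, while the dead parent $p$ of $v$ already carries $m-1$ keys, pinning down $m=2$; and for $m=2$, both you and the paper identify the exceptional types as the longest left and right paths in $\Fc_k$ via an induction along the spine. Your write-up is somewhat more explicit than the paper's---you spell out the structural step that $p$ must be the root of $S$, you track the contribution $\sum_g N_i(g)$ type by type, and you handle the degenerate cases $k=1,2$ within the same framework rather than computing $A_1,A_2$ directly as the paper does---but the substance is the same.
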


\begin{proof}
Observe that
if we draw a ball of type $i$ with $k_i$ keys,
then the ball is replaced either by a single ball of a type with 
$k_i+1$ keys
or by several 
different balls obtained by decomposing a tree with $k_i+1$ keys that has a dead
root.
In the latter case, $m-1$
of the keys are in the dead root, so each living tree in the decomposition has
at most $k_i+1-(m-1)=k_i-m+2$ keys.
Hence, if $m\ge3$, then in no case will there 
be a ball with exactly $k_i$ keys among the added balls, 
and in particular no ball of type $i$; 
consequently, $\xi_{ii}=-1$ and $(A_k)_{ii}=-a_i$,
see \eqref{A}.  

When $m=2$, the same holds except if $T_i$ is such that 
it is possible to add a new key such that the root dies and the tree
decomposes into the dead root, a copy of $T_i$ and an external node.
This can happen only if the root has at most one child, and it follows
by induction that every node has at most one child, so $T$ is a path,
and, furthermore, that
$T_i$ must be either a left path or a right path, with the new key added at
the end; furthermore, it must be the longest such path in $\Fc_k$, since
otherwise the root would not die. If $k\ge3$, then $T_3$ is a path with two
nodes, and it follows that the two exceptional cases are distinct.
If $T_i$ is one of them, then $T_i$ has $a_{ii}$ gaps and only one of them
will yield a new copy of $T_i$ if the new key is added there.
Hence, $\E\xi_{ii}=-(a_{ii}-1)/a_{ii}$, and \eqref{A} yields
$(A_k)_{ii}=-(a_{ii}-1)$. The cases $k=1$ or 2 are similar, but they are so
simple that they are simplest treated separately; the matrices $A_k$ are
$
\begin{pmatrix}
  1
\end{pmatrix}
$
and
$\lrpar{
\begin{smallmatrix}
  -1 & 2 \\
  {\phantom-}1 & 0
\end{smallmatrix}}
$, with $a_1=1$ and $a_2=2$.
\end{proof}

\begin{thm}
\label{thm:eigenvalues}
Let $m \ge 2$. 
The eigenvalues of $A$ are the $m-1$ 
roots of the polynomial 
$\phi_m(\lambda) :=\prod_{i = 1}^{m-1} (\lambda + i) - m!$ plus the multiset 
\begin{equation}\label{ev}
\left\{ -a_i : i = m, m + 1, \dots, q \right\}.
\end{equation} 
\end{thm}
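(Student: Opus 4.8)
The plan is to prove \refT{thm:eigenvalues} by induction on $q = |\cS|$, using the numbering $T_1, \dots, T_q$ compatible with $\preceq$ and the truncated urns with intensity matrices $A_k$ introduced above. The base case is $k = m-1$, where $\cS$ consists precisely of the $m-1$ single-node types (a node holding $j-1$ keys, $j = 1, \dots, m-1$); here the replacement rule is that drawing the type with activity $j$ (for $j \le m-2$) produces a single ball of activity $j+1$, while drawing the type with activity $m-1$ produces $m$ balls, each of the type with activity $1$ (removing the full root leaves $m$ empty external nodes). One writes down $A_{m-1}$ explicitly from \eqref{A} and checks directly that its characteristic polynomial is $\pm\phi_m(\lambda) = \pm\bigl(\prod_{i=1}^{m-1}(\lambda+i) - m!\bigr)$; this is the known computation for the classical \mst{} urn of \cite{Mahmoud2, Janson, Mahmoud:Polya}, so I would cite it or verify it in a line. (The case $m=2$ with $k=1,2$ is handled by the explicit $1\times 1$ and $2\times 2$ matrices written in the proof of \refP{prop:gaps}.)

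For the inductive step, suppose the claim holds for $\Fc_{k-1}$ with $k-1 \ge m-1$, and pass to $\Fc_k$ by adjoining the type $T_k$, which is maximal in $\Fc_k$ under $\preceq$ by the compatible numbering. The key structural observation is that $T_k$ being maximal means no ball decomposition can ever \emph{produce} a ball of type $T_k$: a ball of type $T_k$ can only be drawn and removed. Hence in the intensity matrix $A_k$, written in the basis ordered $T_1, \dots, T_k$, the last row is $(0, \dots, 0, -a_k)$ — the only entry in row $k$ is the diagonal one, which equals $-a_k$ by \refP{prop:gaps} (in the generic case; the $m=2$ path exceptions are exactly the cases already singled out, where one must be slightly careful, but since $T_k$ is the \emph{largest} element it is at most one of the two exceptional paths and the argument still gives a block-triangular structure with a known diagonal perturbation). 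Therefore $A_k$ is block lower-triangular: its eigenvalues are those of the leading $(k-1)\times(k-1)$ block, which is exactly $A_{k-1}$ (the decomposition of $\cT_n$ into $\Fc_{k-1} \cup \{*\}$ is obtained from the $\Fc_k$-decomposition by further cutting below any $T_k$-node, and this does not affect transitions among the smaller types), together with the single extra eigenvalue $-a_k$. By the induction hypothesis the eigenvalues of $A_{k-1}$ are the $m-1$ roots of $\phi_m$ plus $\{-a_i : m \le i \le k-1\}$, so the eigenvalues of $A_k$ are the roots of $\phi_m$ plus $\{-a_i : m \le i \le k\}$, completing the induction.

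The main obstacle I anticipate is justifying the block-triangular claim cleanly — specifically, verifying that the leading $(k-1)\times(k-1)$ submatrix of $A_k$ really \emph{is} $A_{k-1}$ and not merely similar to it or equal up to the diagonal corrections of \refP{prop:gaps}. This requires checking that when a ball of type $T_i$ with $i < k$ is drawn in the $\Fc_k$-urn, the balls it is replaced with are distributed exactly as in the $\Fc_{k-1}$-urn. This is true because the replacement of a drawn ball depends only on the subtree where the new key lands (as emphasized in \refS{polyafringe}), and the only way the two urns could differ is if adding a key to $T_i$ could produce a $T_k$-ball — impossible since $T_k$ is maximal — or if in the $\Fc_{k-1}$-urn some tree $T_i$ would instead be \emph{dead} because it exceeds the down-set; but $\Fc_{k-1}$ is itself a down-set containing all $T_i$ with $i \le k-1$, so no such discrepancy arises. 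I would also remark that the theorem does not assert simplicity or distinctness of these eigenvalues (the $-a_i$ may repeat and may coincide with roots of $\phi_m$), so the statement is about the multiset, and nothing more needs to be checked; the consequences about $\Re\lambda < \lambda_1/2$ needed for \refT{thm:normality} are drawn afterwards from an analysis of the roots of $\phi_m$.
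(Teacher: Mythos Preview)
Your inductive plan and base case are fine, but the core claim in the inductive step is false. You assert that since $T_k$ is maximal in $\Fc_k$, ``no ball decomposition can ever produce a ball of type $T_k$,'' and therefore the last row of $A_k$ is $(0,\dots,0,-a_k)$, making $A_k$ block lower-triangular with leading block $A_{k-1}$. But maximality of $T_k$ means nothing \emph{above} $T_k$ lies in $\Fc_k$; it does not prevent smaller trees from growing \emph{into} $T_k$. If $T_j \prec T_k$ and adding one key to $T_j$ yields $T_k$, then drawing type $j$ produces a ball of type $k$ with positive probability, so $(A_k)_{kj} = a_j\E\xi_{jk} > 0$. Concretely, in the paper's worked example with $m=3$ (Section~\ref{ex1}), the last row of $A = A_6$ is $(0,0,0,2,0,-5)$, not $(0,\dots,0,-5)$: type $4$ grows into type $6$ with probability $1/2$. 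The same example shows the leading $5\times5$ block of $A_6$ is \emph{not} $A_5$: in the $\Fc_5$-urn, drawing type $4$ and landing in what would be type $6$ forces a decomposition (since $T_6\notin\Fc_5$), so column $4$ of $A_5$ picks up contributions to types $1$ and $3$ that are absent from column $4$ of $A_6$.

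The paper's proof handles this correctly by replacing your (false) block-triangularity with an intertwining relation. There is a surjective linear map $T:\bbR^{k+1}\to\bbR^k$ (decompose every $T_{k+1}$-ball into $\Fc_k$-types) with $\cX_n^k = T\cX_n^{k+1}$; comparing the two urn dynamics yields $TA_{k+1} = A_k T$. Since $T$ is onto, $T'$ is injective, and $T'$ carries left generalised eigenvectors of $A_k$ to left generalised eigenvectors of $A_{k+1}$, so $A_{k+1}$ inherits all eigenvalues of $A_k$ with multiplicity. The one extra eigenvalue is then read off from the trace difference $\tr(A_{k+1}) - \tr(A_k) = -a_{k+1}$, which is exactly where Proposition~\ref{prop:gaps} is used. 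Your block-triangular picture is essentially what one would get \emph{after} changing basis via $T$, but it does not hold in the natural basis, and the intertwining argument is what makes the induction go through.
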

\begin{proof}
We prove by induction on $k$ that 
the theorem holds for $A_k$ (with $q$ replaced by $k$ in
\eqref{ev}), for any $k$ with $m-1\leq k \le q$. The theorem is the case $k=q$.

First, for the initial case $k=m-1$,
$T_i$ is a single node with $i-1$ keys, $i=1,\dots,k$;
thus $X^{m-1}_{n,i}$ is the number of nodes with $i-1$ keys, i.e., the number
of nodes with $i$ gaps. 
(In particular, $X^{m-1}_{n,1}$ is the number of external nodes.)
This \Polya{} urn with $m-1$ types
has earlier been analyzed, see e.g., 
\cite[Example 7.8]{Janson} and
\cite[Section 8.1.3]{Mahmoud:Polya}. 
The $ (m-1)\times (m-1 )$ matrix $ A_{m-1}$ has elements
$ a_{i,i}=-i$ for $ i\in \{1,\dots,m-1\} $, 
 $ a_{i,i-1}=i-1$ for $ i\in \{2,\dots,m\} $, 
$ a_{1,m-1}=m\cdot (m-1) $ and all other elements $ a_{i,j}=0 $,
i.e.,
 \begin{align}\label{AWmatrix}
A_{m-1}=\lrpar{
  \begin{array}{rrrccc}
 -1 & 0 & 0 & \dots & 0 & m(m-1)  \\
1 & -2 & 0 & \dots & 0 & 0 \\
0 & 2 &-3 & \dots & 0 & 0 \\
0 & 0 & 3 & \dots & 0 & 0 \\
\vdots & \vdots & \vdots & \ddots & \vdots & \vdots \\
0 & 0 & 0 &\dots & m-2 & -(m-1) 
\end{array}}
.
\end{align}
As is well-known, 
the matrix $A_{m-1}$ has characteristic polynomial
$\phi_m(\gl)$; this shows the theorem for $k=m-1$, since the set \eqref{ev}
is empty in this case.

We proceed to the induction step.
Let $m-1\le k<q$.
By using  arguments similar  to those that were used in the proof of
\cite[Lemma 5.1]{HolmgrenJanson2} we will show that $A_{k+1}$ inherits (with
multiplicities) the eigenvalues  of $A_k$. 
We write $ a^k=(a_1,\dots,a_k) $ for the activity vector of the \Polya{}
urn with types in $ \Fc_k $.

We have $ \Fc_{k+1}=\Fc_k \cup \set{T_{k+1}}$. 
The vector $\mathcal{X}_n^{k+1}$ determines also the number
of subtrees of each type in the decomposition of $\cT_n$
into the types in $ \Fc_k $, and
there is an obvious linear map
$T:\bbR^{k+1}\to\bbR^{k}$ such that 
$\cX_n^{k} = T \mathcal{X}_n^{k+1}$.
Furthermore, starting the urns with an arbitrary (deterministic) non-zero
vector $\mathcal{X}_0^{k+1}\in\bbzp^{k+1}$ and $\cX_0^{k}=T\mathcal{X}_0^{k+1}$, 
the urn dynamics yield
\begin{align}
  \E(\mathcal{X}_{1}^{k+1}-\mathcal{X}_0^{k+1})
&=\frac{A_{k+1} \mathcal{X}_0^{k+1}}{a^{k+1}\cdot \mathcal{X}_0^{k+1}},
\\
  \E(\mathcal{X}_{1}^{k}-\mathcal{X}_0^{k})&=\frac{A_k \mathcal{X}_0^{k}}{a^{k}\cdot \mathcal{X}_0^{k}}.
\end{align}
Consequently, since also
$a^{k+1}\cdot \mathcal{X}_0^{k+1}=a^k\cdot \mathcal{X}_0^{k}$ (this is the total
activity, i.e., the total number of gaps),
\begin{align*}
\begin{split}
TA_{k+1}\mathcal{X}_0^{k+1} &= (a^{k+1}\cdot \mathcal{X}_0^{k+1})T\E(\mathcal{X}_{1}^{k+1}-\mathcal{X}_0^{k+1})
=(a^k\cdot \mathcal{X}_0^{k})\E(\mathcal{X}_{1}^{k}-\mathcal{X}_0^{k}) = A_k \mathcal{X}_0^{k} 
\\&
= A_kT\mathcal{X}_0^{k+1},
  \end{split}
\end{align*}
and thus, since $\cX_0^{k+1}$ is arbitrary, 
\begin{equation}\label{TA}
TA_{k+1} = A_k T.   
\end{equation}

 Let $u'$ be a left generalised eigenvector of rank $ m $ corresponding to the eigenvalue  $\gl$ of the matrix $A_k$, i.e.,
 $$u'(A_k-\lambda I_k)^{m} =0. $$ 
Then, by \eqref{TA},
 $$u'T(A_{k+1}-\lambda I_{k+1})^{m}= u'(A_{k}-\lambda I_{k})^{m}T=0, $$ 
 and thus 
$u'T=(T'u)'$ is a left generalised eigenvector of $A_{k+1}$
for the eigenvalue $\gl$.
Since $T$ is onto (it maps $(x_1, \dots, x_k,0)$ to $(x_1, \dots, x_k)$), $T'$ is injective and thus
$T'$ is an injective map of the generalised eigenspace (for $\gl$)
of $A_k$ into the generalised eigenspace of $A_{k+1}$.
This shows that $\gl$ is an eigenvalue of 
$A_{k+1}$ with algebraic multiplicity at least as large as for $A_k$. 
Consequently, 
if $A_k$ has
eigenvalues $\lambda_1, \dots, \lambda_k$ (including repetitions, if any),
then
$A_{k+1}$ has
eigenvalues $\lambda_1, \dots, \lambda_k, \lambda_{k+1}$ for some
complex number $\lambda_{k+1}$. 

Then the result follows by the following observation. 
The trace of a matrix is equal to the sum of the eigenvalues; hence, 
\begin{equation}\label{eqtrace}
\tr A_{k+1}=
\lambda_1 + \dots + \lambda_{k+1} =  \tr A_k +\lambda_{k+1}
\end{equation}
and thus by \eqref{trace>2} (when $m>2$) or \eqref{trace=2} (when $m=2$),
\begin{equation}
  \lambda_{k+1}=\tr(A_{k+1})-\tr(A_k)=-a_{k+1}.
\end{equation}
Thus, by induction, Theorem \ref{thm:eigenvalues} holds for every $A_k$,
with $m-1\le k\le q$, and in particular for $A=A_q$.
\end{proof}

\refT{thm:eigenvalues} shows  that the eigenvalues of $A$ are the
roots of $\phi_m$ plus some negative numbers $-a_i$; hence the condition
$\Re\gl<\gl_1/2$ in \refT{thm:normality} is satisfied for all eigenvalues of
$A$ except $\gl_1$ if the condition is satisfied for the roots of $\phi_m$
except $\lambda_1$.
Furthermore, $\gl_1=1$
by \refR{Racdotmary}.
Let 
\begin{equation}\label{gamm}
\gam_m:=\max_{\phi_m(\gl)=0:\gl\neq\gl_1}\Re\gl , 
\end{equation}
i.e., the largest real part of a root of $\phi_m$ except $\gl_1$.
(If $m=2$, when there is no other root, we  interpret this as
$\gam_2=-\infty$.) Thus our condition in \refT{thm:normality} is satisfied
if and only if $\gam_m<\frac12$; 
it is well-known that this holds if $m\le26$, but not
for larger $m$, see \cite{MahmoudPittel} and \cite{FillKapur}.

In the remainder of this section we assume $m\le26$.
Thus 
\begin{equation}\label{gammm}
 \Re\gl\le\gammm<\frac12=\frac{\gl_1}2
\end{equation}
 for every eigenvalue $\gl\neq\gl_1$, and
\refT{thm:normality} applies to the urn defined above.

\begin{proof}[Proof of \refT{multivariate}]
  By \refT{thm:normality}\ref{T0a},
\eqref{t0a} holds, with $\mu=\gl_1v_1=v_1$.

By \eqref{xy}, 
$\bX_{n}=\bigpar{X^{T^{1}}_{n},X^{T^{2}}_{n},\dots,X^{T^{d}}_{n}}=R\cX_n$
for some (explicit) linear operator $R$. Hence, \eqref{t0a} implies
\begin{equation}\label{dixi}
  n\qqw\bigpar{\bX_n-nR\mu}
=R\bigpar{n\qqw(\cX_n-\mu)}
\dto \N\bigpar{0,R\gS R'}.
\end{equation}
Furthermore, as said above, $\Re\gl\le\gam_m$ for every eigenvalue
$\gl\neq\gl_1$. We note also that if $\gl\neq\gl_1$ is an eigenvalue
with $\Re\gl=\gammm$, then $\gl$ is not in \eqref{ev} so $\gl$ is a root of
$\phi_m$; 
furthermore, all roots of $\phi_m$ are simple \cite{MahmoudPittel}, 
and therefore $\gl$ is a simple eigenvalue.
Hence,  
by \cite{JansonMean}, 
\cf{} \cite[Theorem 1]{MahmoudPittel} for a special case proved by
other methods,
\begin{equation}\label{tollm}
  \E\cX_n=
n\mu+O\bigpar{n^\gammm},
\end{equation}
and thus, since $\gam_m<\frac12$ for $m\le26$ as said above,
\begin{equation}\label{tollis}
  \E\cX_n
=
n\mu+o\bigpar{n\qq}.
\end{equation}
Hence,
\begin{equation}\label{vox}
\bmu_n=\E\bX_n=R\bigpar{\E\cX_n}=nR\mu+o\bigpar{n\qq}.
\end{equation}
Consequently, \eqref{dixi} implies \eqref{mv1}, with the covariance matrix $R\gS
R'$,
where $\gS$ is as in \eqref{t0a}.

Moreover, as said in \refR{Rmean}, it follows from
\cite{HJbranching}, to be precise by combining
\cite[(5.30), Theorem 7.10 and Theorem 7.11]{HJbranching},
that (for any $m\ge2$) 
\begin{equation}\label{mensa}
\E\bX_n=n\hbmu+o(n).
\end{equation}
By combining \eqref{vox} and \eqref{mensa} we see that $R\mu=\hbmu$ (since
neither depends on $n$), and thus \eqref{vox} yields \eqref{rectus}.

To see that the covariance matrix $R\gS R'$ is non-singular
when each $T^i$ has an internal node so $k_i>0$, suppose that, on
the contrary,
$u'R\gS R'u=0$ for some vector $u\neq0$. 
Then, by \cite[Theorem 3.6]{JansonMean},
$u'\bX_n=u'R \cX_n$ is deterministic for every $n$.
We argue as for the case $k=2$ in the proof of \cite[Lemma 3.6]{HolmgrenJanson}.
We may assume that every $u_i\neq0$, since we may just ignore each $T^i$
with $u_i=0$; we may also assume that $1\le k_1\le k_2\le\dots$.  
Let $N$ be a large integer, with $N>k_d$, and let $T_1$ be a tree consisting
of a single path with $N+k_1$ internal nodes, each of them (except the root)
the right-most child of the preceding one. 
Let $T_2$ consist of a similar right-most path with $N$ internal nodes,
together with $m-1$ copies of $T_1$, which have their roots as the $m-1$
first children of $T_2$. Note that both $T_1$ and $T_2$ have $(N+k_1)(m-1)$
keys, so they are possible realizations of $\cT_{(N+k_1)(m-1)}$. Moreover,
for any tree $T^i$, $i\ge2$, $T_1$ and $T_2$ have the same number of fringe
trees isomorphic to $T^i$,  while $T_1$ contains $m-1$ more copies of $T^1$
than $T_2$ does. Hence the linear combination $u'\bX_n=\sum_i u_i X_n^{T^i}$
may take at least two different values when $n=(N+k_1)(m-1)$,  
which is a contradiction. Consequently, the covariance matrix cannot be
singular when all $k_i>0$.
\end{proof}
  
\begin{proof}[Proof of \refT{variance}]
Let $T^1,\dots,T^d$ be all non-random \mst{s} with $k$ keys.
(We may consider either ordered or unordered trees; 
for numerical calculations, the unordered case is simpler.)
Then $Y_{n,k}=\sum_{i=1}^d X_n^{T^i}$.  The result \eqref{main2a} thus
follows from \eqref{mv1}, and $\gs_k^2>0$ when $k>0$  
because the covariance matrix $\gS$ in \eqref{mv1} then is non-singular.
Also in the case $k=0$ and $m>2$ we have $\gss_k>0$, because 
$Y_{n,0}=n+1-\sum_{j=1}^{m-2}(j+1)Y_{n,j}$ by \eqref{sing}, and 
the asymptotic variance of the right-hand side is non-zero by an application
of \refT{multivariate} to the sequence of all \mst{s} with at least 1 and at
most $m-2$ keys.

Furthermore, summing \eqref{rectus} over the trees $T^i$  yields
\begin{equation}\label{obliquus}
\E\bigpar{Y_{n,k}}
=\frac{n}{(H_m-1)(k+1)(k+2)} +o\bigpar{n\qq}.
\end{equation}
and thus \eqref{main2b} follows.
\end{proof}

\begin{rem}
  \label{RSN}
The results on moment convergence
in \citet{JansonPouyanne} are stated only for a \Polya{} urn
with deterministic replacements. In the present case, we can instead
consider an urn where the balls represent gaps in the trees in the
construction in \refS{polyafringe}. This yields an urn with
deterministic replacements, although now also some subtractions of
balls. However, the results in
\cite{JansonPouyanne} still apply, 
and thus yield convergence of all moments in
\eqref{mv1}, as said in \refR{Rmean}.
As a consequence, \eqref{main2a} and \eqref{main2b} also hold with
convergence of all moments.
\end{rem}
  
\section{\Polya{} urns to count fringe subtrees in preferential attachment trees} \label{prefpolya} 

We now describe the \Polya{} urns that we use for proving Theorem
\ref{recursivemulti} for linear preferential attachment trees.
We may consider either ordered or unordered trees, 
see Remark \ref{Rpref0}.

\subsection{A \Polya{} urn with infinitely many types for the general case}
\label{infpref}

Let $\gL^1,\dots,\gL^d$ be a fixed sequence of rooted trees and let
$\bZ_{n}=(X^{\Lambda^{1}}_{n},X^{\Lambda^{2}}_{n},\dots,X^{\Lambda^{d}}_{n})$,
where $  X^{\Lambda^{i}}_{n}$ is the number of fringe subtrees that are
isomorphic to $\Lambda^{i}$ in  
$ \Lambda_{n} $. 

Assume that we have a given preferential attachment tree $\Lambda_n$.
As in \refS{polyafringe}, we say that a node $v$ is \emph{living} if 
the fringe subtree
$ \Lambda_n(v)\preceq \Lambda^{i}$  for some $ i\in \{1,\dots,d\} $.
Furthermore, we let all descendants of a living node be living.
All other nodes we declare \emph{dead}.

Now erase all edges from dead nodes to their children.
This yields a forest of small trees, 
where each tree either consists of a single dead node or 
is living and can be grown to become one
of the $\Lambda^i$. 
Again, we regard these small trees as the balls in our generalised \Polya{} urn.
However, unlike the situation in \refS{polyafringe}, we now cannot ignore 
the dead nodes, since they may get new children; furthermore, the
probability of this depends on their degree. Hence we label each dead node
by the number of children it has in $\Lambda_n$.

Hence, the types in this \Polya{} urn are all (isomorphism types of) 
rooted trees $\Lambda$ such that 
$ \Lambda\preceq \Lambda^{i}$  for some $ i\in \{1,\dots,d\} $ (these are
called \emph{normal types}),
plus one type $*_k$ for each positive integer $k$, consisting of a single
dead node labelled by $k$ (these are called \emph{special types}).
In other words, the set of types is $\cS'\cup\cS''$, where
\begin{equation}\label{cS'}
  \cS':=\bigcup_{i=1}^d\set{\Lambda:\Lambda\preceq \Lambda^i}
\end{equation}
is the set of normal types (\cf{} \eqref{cS})
and $\cS'':=\set{*_k:k\ge1}$ is the set of special types.
(As in the corresponding case in \refS{polyafringe},
the set $\cS'$ is thus a down-set for the partial order
$\preceq$. Conversely, any finite down-set  
occurs as $\cS'$, for example, by choosing $\Lambda^1,\dots,\Lambda^d$ to be the trees in $\cS'$.)

Unfortunately, $\cS''$ is infinite, so this is a \Polya{} urn with
infinitely many types. \refT{thm:normality} does not apply to such urns,
and we do not know any extension to infinite-type urns that can be used here.
However, in the linear case \eqref{wlinear}, we can reduce the urn to a
finite-type one; this is done in the following subsection.

Nevertheless, it is easy to describe the replacement rules for this urn in general.
The activity $ a_\gL $ of a normal type $\gL$ is the total weight 
$w_\gL:=\sum_{v\in \gL}w_{\dout(v)}$
of all nodes
in $\gL$, while a special type $*_k$ has activity $w_k$.
If a ball of a normal type $\gL$ is drawn, we add a new child to one of its
nodes, with probabilities determined by the weights $w_k$ and the out-degrees as
in the definition of $\gL_n$, and if the resulting tree is dead (does not
belong to $\cS'$), it is
decomposed into several trees (including at least one dead node). 
If a ball of a special type $*_k$ is drawn,
it is replaced by one ball of type $*_{k+1}$ and one ball of type $\bullet$, the
tree with a single node. 
(The tree $\bullet$ is always a normal type.)

\subsection{A \Polya{} urn with finitely many types for the linear case}
\label{orderpref}

Consider from now on only the linear case \eqref{wlinear}.
We then can replace the infinite-type 
\Polya{} urn in \refS{infpref} by a \Polya{} urn with finitely many types
by using a version of the trick used in \cite{SJ155,Janson} to study
node degrees in random recursive trees and plane oriented recursive trees.

Recall that we may assume that $\chi \in \left\{ -1, 0, 1 \right\}$.
For simplicity, consider first the case when $\rho$ (and thus every $w_k$) is  
an integer. Change each ball of type $*_k$ to $w_k$ balls of a new type $*$.
Let $*$ have activity 1; then the activities are preserved by the
change. Moreover, if a ball of type $*_k$ is drawn, it is, as said
above, replaced by one ball of type $*_{k+1}$ and one of type $\bullet$;
after the change, this means that the number of balls $*$ is increased
by $w_{k+1}-w_k=\chi$.
(This is where the linearity of the weights is essential.) 
If $\chi = -1$, this means that the ball is not replaced.
Consequently, the new urn also evolves as a \Polya{} urn with types
$\cSx:=\cS'\cup\set*$. 
A normal type $\gL$ has the activity $\sum_{v\in \gL}w_{\dout(v)}$ as above in
\refS{infpref}, and when drawn, it is replaced as above, 
except that instead of any ball of a type $*_k$ we add $w_k$ balls of type $*$.
The special type $*$ has activity 1, and when drawn, it is replaced 
and we add
$\chi$ additional balls of type $*$ and one ball of type $\bullet$. 
(For an example, see \refS{ex3}.)

Moreover, we can do the same for general $\rho$.
As said in \refR{Rreal},
the \Polya{} urn process is well-defined for \emph{real-valued} 
$X_{ni}$ and $\xi_{ij}$, interpreting
the ``number of balls'' as the mass in the urn of each type.

With this interpretation, the \Polya{} urn with types $\cSx$ just
described exists and describes the evolution of fringe subtrees for any linear
preferential attachment model, also with non-integer $\rho$.
(We can also allow $\chi\notin\set{-1,0,1}$, but as said earlier, this is
not more general.)

Note that the extension to real-valued urns is needed only when $\chi = 1$;
when $\chi=0$ we may assume that $\rho=1$ and when $\chi = -1$ we necessarily
have $\rho\in\bbZ_+$, see \refS{recursive}; hence the integer
version is enough in these cases. 
Note further that then
(\aii) holds, with $d_i=1$ for every normal type and $\xi_{**}\ge0$.
Hence, \refT{thm:normality} holds also for the real-valued urns
considered here; see \refR{Rreal}.

\begin{rem}
In the linear case, the total activity increases by $\chi+\rho=w_1$ each
time a ball is drawn; the easiest way to see this is by going back to the preferential
attachment tree $\gL_n$ and noting that: the total activity in the urn equals
the total weight of the tree; when we add a new node, it contributes extra 
weight $w_0=\rho$ and the weight of its parent increases by $\chi$.
In other words, we have $a\cdot\xi_i=\chi+\rho$ deterministically, and thus
\eqref{acdot} holds with $c=\chi+\rho$. Consequently, $\gl_1=\chi+\rho$, see
\refR{Racdot}. 

As a consequence, we also see that the total activity of the urn before the
$n$'th step, \ie, the total weight of the preferential attachment tree
$\gL_n$ with $n$ nodes, is
\begin{equation}\label{totalw}
 w_{\gL_n}=
\rho+(n-1)(\chi+\rho)=n(\chi+\rho)-\chi,
\end{equation}
see  e.g., \cite[(6.22)]{HJbranching};
this is deterministic, and any tree with $n$ nodes has the same total weight.
\end{rem}

\section{Proofs of Theorem \ref{recursivemulti} and Theorem \ref{variance2}}\label{proofspref}

To apply Theorem \ref{thm:normality} in the proofs of Theorem \ref{recursivemulti} and Theorem \ref{variance2} it remains to show that\/ $\Re\lambda<\lambda_1/2 $
for each eigenvalue $ \lambda\neq \lambda_1 $ 
of the intensity matrix $A$ for the urn in \refS{orderpref}. 
We will again (as in the case
of random $ m $-ary search trees in \refS{polyafringe}) find the eigenvalues
of $ A $ by using induction. 

We consider either ordered or unordered trees; the results and proofs below
apply to both versions.

Recall that the set of types is $\cSx=\cS\cup\set*$.
Let $q:=|\cSx|$ be the number of types, and 
choose a numbering $T_1,\dots,T_{q-1}$ 
of the $q-1$ normal types that is compatible with the partial
order $\preceq$. (Hence, $T_1=\bullet$.)
For $2\le k\le q$, let
\begin{equation}
\Fc_k := \left\{ T_1, \dots, T_{k-1},* \right\}.
\end{equation} 
Since $\Fc_k\setminus\set*=\set{T_1,\dots,T_{k-1}}$ is a down-set for $\preceq$,
we may thus consider the 
\Polya{} urn with the $k$ types in $\Fc_k$, defined as in \refS{orderpref}.
Let $ A_k $ be the $k\times k$ intensity matrix of this \Polya{} urn, and
let $ \mathcal{X}_n^{k}:=(X^k_{n,1},\dots,X^k_{n,k}) $, where 
$X^k_{n,i} $ is the number of balls of type  
$T_i$ in this urn at time $ n $.
The activities are $a^k:=(a_1,\dots,a_{k-1},1)$, where 
\begin{equation}\label{tw2}
a_i=w_{T_i}=|T_i|(\chi+\rho)-\chi.
\end{equation}
see \eqref{totalw}. (Recall that $*$ always has weight 1.)

\begin{prop} \label{prop:pref}
Let $2\le k\le q$.
  \begin{romenumerate}[-10pt]
  \item \label{Ppref1}
For every normal type $T_i$, $i = 1, \dots, k-1$, except the type that is a
path (rooted at an endpoint) of maximal length,
\begin{equation}\label{pref1}
(A_k)_{ii} = -a_i.
\end{equation}
\item \label{Ppref2}
For the normal type $T_j$ that is a path of maximal length among all paths
in $\cS_k$,
\begin{equation}\label{pref2}
	(A_k)_{jj}= \rho - a_{j}.
\end{equation}

\item \label{Ppref*}
  For the special type $ * $, we have 
  \begin{equation}\label{pref*}
(A_k)_{kk} =\chi.	
  \end{equation}
  \end{romenumerate}
Consequently,
\begin{equation}\label{trpref}
	\tr(A_k)= \chi+\rho - \sum_{j = 1}^{k-1} a_{j}.
\end{equation}
\end{prop}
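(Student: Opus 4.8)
The plan is to compute each diagonal entry via $(A_k)_{ii}=a_i\,\E\xi_{ii}$, reading the replacement vector $\xi_i$ off the description of the urn in \refS{orderpref}, exactly as in the proof of Proposition~\ref{prop:gaps}. Part \ref{Ppref*} is immediate: a drawn ball of the special type $*$ is returned together with $\chi$ further balls of type $*$ (and one ball of type $\bullet$), so $\xi_{kk}=\chi$ deterministically and $(A_k)_{kk}=\chi$. For a normal type $T_i$, drawing it means attaching a new child to a node $v$ chosen with probability $w_{\dx(v)}/a_i$; if the resulting tree $T'=T_i^{(v)}$ is still a living type of the urn, the drawn ball simply becomes a ball of type $T'$, while if $T'$ is dead the ball is replaced by the balls produced by the decomposition of $T'$. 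In either case $\xi_{ii}=-1+N_i(v)$, where $N_i(v)$ is the number of pieces of that decomposition isomorphic to $T_i$, so the whole proposition comes down to deciding when $N_i(v)\ge1$.

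The main work will be a structural analysis of the decomposition of a dead $T'=T_i^{(v)}$. Since $T_i$ is living (all its nodes are living), only $v$ and its ancestors change their fringe subtree when the child is attached, so only the nodes on the path from the root to $v$ can be dead in $T'$; moreover the set of dead nodes is closed under taking parents (a child of a living node is living, by the construction in \refS{orderpref}). I would conclude that the dead part of $T'$ is an initial segment $u_0,\dots,u_s$ of the root--to--$v$ path, so that every living piece of the decomposition is either a proper fringe subtree of $T_i$ (with fewer than $|T_i|$ nodes) or the piece $(T_i(u_{s+1}))^{(v)}$ rooted at the child of $u_s$ on that path, which has $|T_i(u_{s+1})|+1\le|T_i|-s$ nodes. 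Hence $N_i(v)\ge1$ can occur only when $s=0$ and the root of $T_i$ has out-degree $1$, with $(T_i(u_1))^{(v)}\cong T_i$ for $u_1$ the unique child of the root. I would then settle this isomorphism by induction on $|T_i|$: writing $T_i=\bullet\!\to\!S$ one needs $S^{(v)}\cong\bullet\!\to\!S$, and comparing the out-degrees of the roots forces either $S=\bullet$ or $S=\bullet\!\to\!S'$ with $(S')^{(v)}\cong\bullet\!\to\!S'$; iterating peels off a path, so $T_i$ must be a path rooted at an endpoint and $v$ its unique leaf.

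Finally I would assemble the counts. If $T_i$ is not a path, or is a path but not the longest one in $\Fc_k$, then $N_i(v)=0$ for every $v$ (in the latter case attaching a child at the leaf yields a longer path, which still lies in the down-set $\Fc_k\setminus\set*$ and is therefore living, so $T'$ is not dead; for all other $v$ the analysis above gives $N_i(v)=0$ directly). Hence $\xi_{ii}=-1$ a.s.\ and $(A_k)_{ii}=-a_i$, which is \ref{Ppref1}. For the longest path $T_j$ in $\Fc_k$ (with $T_j=\bullet$ when $k=2$), attaching a child at the leaf produces a dead path in which only the root dies, still with out-degree $1$, so the decomposition is that single dead node (contributing $w_1$ balls of type $*$) together with one copy of $T_j$, and no copy arises for any other $v$. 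Thus $N_j(v)=\I{v\text{ is the leaf of }T_j}$, the leaf has out-degree $0$ and so is chosen with probability $w_0/a_j=\rho/a_j$, giving $\E\xi_{jj}=-1+\rho/a_j$ and $(A_k)_{jj}=a_j\E\xi_{jj}=\rho-a_j$, which is \ref{Ppref2}. Summing the $k$ diagonal entries then gives $\tr(A_k)=(\rho-a_j)+\chi-\sum_{i\ne j}a_i=\chi+\rho-\sum_{i=1}^{k-1}a_i$, i.e.\ \eqref{trpref}. I expect the middle paragraph to be the hard part: establishing that the dead region of $T'$ is an initial segment of the root--to--$v$ path, and proving the isomorphism fact that among the normal types only the longest path can reproduce itself upon a draw.
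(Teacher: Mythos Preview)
Your proposal is correct and follows essentially the same route as the paper: compute $(A_k)_{ii}=a_i\E\xi_{ii}$, argue that a drawn normal type $T_i$ can reappear among the replacement balls only if the root alone dies and its unique subtree, with the new leaf attached, is again $T_i$, deduce by induction that $T_i$ must be a path with the new node added at the leaf, and then handle the maximal-path and special cases separately. The paper's version is somewhat terser---it asserts directly that the only way to recover a piece of size $|T_i|$ is to kill just the root, without spelling out the ``dead region is an initial segment of the root--to--$v$ path'' analysis you give---but your more explicit structural argument is perfectly sound and arrives at the same conclusion.
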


\begin{proof}
This is similar to the proof of Proposition \ref{prop:gaps}.
Consider first a normal type $T_i$.
If we draw $T_i$, then we add a node to it, which either gives a new living
type $T_j\neq T_i$, or a tree with a dead root that is decomposed. In the
latter case, the only possibility to get a copy of $T_i$ is that only the
root is dead, and that when we remove it, the remaining $|T_i|$ living nodes
form a tree isomorphic to $T_i$, i.e., that we first add a node to $T_i$ and
then remove the root, and obtain a copy of $T_i$. 
In this exceptional case, the root of $T_i$ has to have degree at most 1,
and by induction it follows that every node in $T_i$ has out-degree at most
1, so $T_i$ is a path; furthermore, the new node has to be added at the leaf,
and $T_i$ has to have maximal length (since otherwise the root would not die
when the new node is added).

\pfitemref{Ppref1}
In this case, a drawn ball of type $T_i$ is never replaced by a
ball of the same type; thus $\xi_{ii}=-1$ and, by \eqref{A},
$(A_k)_{ii}=-a_i$, which yields \eqref{pref1}.

\pfitemref{Ppref2} 
If $T_j$ is a maximal path, then when adding a new node to $T_j$, the probability of
adding it at the leaf is $w_0/w_{T_j}$. In this
case, and only then, we obtain a new ball $T_j$. Consequently,
\begin{equation}
  \E \xi_{jj} = -1 + w_0/w_{T_j}=-1+w_0/a_j
\end{equation}
and, by \eqref{A},
\begin{equation}
(A_k)_{jj}= a_j\E\xi_{jj}
=a_j(-1+w_0/a_j)
=-a_j+w_0=-a_j+\rho,
\end{equation}
which yields \eqref{pref2}.

\pfitemref{Ppref*} 
The special type $*$ has activity 1, and 
if we draw a ball of type $* $, then
the total change in the urn is $ \chi $ additional balls of type $ * $ and
one additional ball of type $\bullet$;
hence
$(A_k)_{kk}= \E \xi_{kk}=\chi$.
\end{proof}

\begin{thm}
\label{thm:eigenvaluesrecursive}
For the linear preferential attachment trees, the eigenvalues of the
intensity matrix $A$ are $\chi+\rho $ and $- a_{i} $ for
$i\in\{1,\dots,q-1\} $, where   
$a_{i}$ is given by \eqref{tw2}.
\end{thm}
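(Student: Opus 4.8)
The plan is to mimic the proof of \refT{thm:eigenvalues}, arguing by induction on $k$, for $2\le k\le q$, that the eigenvalues of the $k\times k$ intensity matrix $A_k$ of the \Polya{} urn with type set $\Fc_k=\set{T_1,\dots,T_{k-1},*}$ are exactly $\chi+\rho$ together with the multiset $\set{-a_i:1\le i\le k-1}$, with $a_i=|T_i|(\chi+\rho)-\chi$ as in \eqref{tw2}. The theorem is then the case $k=q$, since $A=A_q$.

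For the base case $k=2$ we have $\Fc_2=\set{\bullet,*}$, and the replacement rules of \refS{orderpref} determine $A_2$ at once: a drawn ball $\bullet$ (activity $a_1=\rho$) acquires a child, so its root dies with out-degree $1$ and the resulting decomposition is, after the construction of \refS{orderpref}, one ball $\bullet$ together with $w_1=\chi+\rho$ balls of type $*$; a drawn ball $*$ (activity $1$) yields $\chi$ extra balls $*$ and one ball $\bullet$. Hence, with the types ordered as $(\bullet,*)$,
\[
A_2=\begin{pmatrix}0&1\\ \rho(\chi+\rho)&\chi\end{pmatrix},
\]
whose characteristic polynomial $\gl^2-\chi\gl-\rho(\chi+\rho)=(\gl-(\chi+\rho))(\gl+\rho)$ has roots $\chi+\rho$ and $-\rho=-a_1$. (This agrees with Proposition~\ref{prop:pref}: $\bullet$ is the maximal path in $\Fc_2$, so $(A_2)_{11}=\rho-a_1=0$, while $(A_2)_{22}=\chi$.)

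For the inductive step, fix $2\le k<q$, so $\Fc_{k+1}=\Fc_k\cup\set{T_k}$, where the type $T_k$ satisfies $T_k\not\preceq T_j$ for $j<k$ because the numbering is compatible with $\preceq$. Hence, passing from the $\Fc_{k+1}$-decomposition of $\gL_n$ to the $\Fc_k$-decomposition, every $T_k$-component has its root (and possibly some deeper nodes) become dead, and re-decomposing a $T_k$-component splits it into a fixed number of living components of types among $T_1,\dots,T_{k-1}$ plus a fixed number of dead nodes, which the construction of \refS{orderpref} turns into a fixed number of balls of type $*$. This yields a surjective linear map $T\colon\bbR^{k+1}\to\bbR^{k}$ --- the identity on the coordinates indexed by $T_1,\dots,T_{k-1},*$ together with the fixed contribution of the $T_k$-coordinate --- such that $\cX^k_n=T\cX^{k+1}_n$ for all $n$ and $a^{k+1}\cdot x=a^k\cdot Tx$ for all $x$ (the $T_k$-coordinate being redistributed into balls of the same total activity, since total weight is additive under the decomposition). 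Comparing the expected one-step increments of the two urns started from an arbitrary deterministic $\cX^{k+1}_0\in\bbzp^{k+1}$, exactly as in the derivation of \eqref{TA}, gives the intertwining relation $TA_{k+1}=A_kT$. As $T$ is onto, $T'$ is injective and maps left generalised eigenvectors of $A_k$ for an eigenvalue $\gl$ to such eigenvectors of $A_{k+1}$; hence the algebraic multiplicity of every $\gl$ in $A_{k+1}$ is at least that in $A_k$, so $A_{k+1}$ has eigenvalues $\chi+\rho,-a_1,\dots,-a_{k-1}$ and exactly one further eigenvalue $\gl_{k+1}$. Taking traces and using \eqref{trpref}, $\gl_{k+1}=\tr A_{k+1}-\tr A_k=-a_k$, which completes the induction and the proof.

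The step I expect to be the main obstacle is making this inductive passage fully rigorous: one must check that the $\Fc_k$-urn really is the image under $T$ of the $\Fc_{k+1}$-urn --- equivalently, that drawing a ball and decomposing commutes with the coarsening $T$, and that the $\Fc_k$-replacement vectors are the $T$-pushforwards of the $\Fc_{k+1}$-ones --- so that $\cX^k_n=T\cX^{k+1}_n$ and therefore $TA_{k+1}=A_kT$; and one must verify that the special-type bookkeeping of \refS{orderpref} is respected, i.e.\ that the newly dead nodes of a re-decomposed $T_k$-component contribute exactly the right number of $*$-balls, so that total activity, and hence the trace identity \eqref{trpref}, is preserved. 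All of this is parallel to the $m$-ary argument and to \cite[Lemma 5.1]{HolmgrenJanson2}, and the real-valued-urn extension needed when $\chi=1$ and $\rho\notin\bbZ$ is covered by \refR{Rreal}; so no essentially new difficulty arises, and the base case together with \eqref{trpref} does the rest.
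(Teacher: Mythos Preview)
Your proof is correct and follows essentially the same approach as the paper's own proof: induction on $k$ with base case $A_2$ computed explicitly, the intertwining relation $TA_{k+1}=A_kT$ to show eigenvalues are inherited (via generalised eigenvectors and injectivity of $T'$), and the trace formula \eqref{trpref} from Proposition~\ref{prop:pref} to identify the new eigenvalue as $-a_k$. The paper simply says the induction step is ``identical to the one in the proof of Theorem~\ref{thm:eigenvalues}'', so your write-up is in fact somewhat more detailed than the paper's.
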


\begin{proof} 

The proof is  similar to the proof of Theorem \ref{thm:eigenvalues}.
Again we will use induction, and consider the
$k\times k$ matrices $A_k$ defined above.

We start the induction with $ k=2 $, when the types are \set{\bullet,*}. 
This means that all nodes with children are dead. If we draw a ball of type $\bullet$, 
we add a node to it and get a tree with two nodes; 
the root is dead and we obtain a
new ball of type $\bullet$ and a dead node of type $*_1$, which is changed to a mass 
$w_1=\chi+\rho$ of type $*$. A ball of type $\bullet$ is thus replaced by another
ball of type $\bullet$ and a mass $\chi+\rho$ of type $*$, so $\xi_{11}=1-1=0$ (in accordance with
\eqref{pref2}) and $\xi_{12}=\chi+\rho$. Similarly, 
$\xi_{21}=1$ and $\xi_{22}=\chi$. Consequently, by \eqref{A}, 
since $a_1=w_{T_1}=w_0=\rho$,
\begin{equation}\label{A2pref}
A_2=\begin{pmatrix}
0&1\\
\rho(\chi+\rho) & \chi
\end{pmatrix}.  
\end{equation}
The matrix $ A_2 $ has eigenvalues $ \chi+\rho $ and $ -\rho=-a_{1}$,
verifying the theorem for $A_2$.

The induction step is identical to the one in the 
proof of Theorem \ref{thm:eigenvalues}. We see again
that the eigenvalues of $ A_{k+1} $ are inherited from $ A_k $, i.e., that 
the eigenvalues of $A_{k+1}$ can be listed (with multiplicities) as
$\gl_1,\dots,\gl_{k+1}$ where 
$\gl_1,\dots,\gl_{k}$ where are the eigenvalues of $A_k$.
Finally, we use \eqref{trpref} and deduce
\begin{equation}
  \gl_{k+1}=\tr(A_{k+1})-\tr(A_k)
  =-a_{k},
\end{equation}
 and the theorem follows by induction.
\end{proof}

\begin{rem} For the linear preferential attachment tree the eigenvalues are
  easier to describe than for the $ m $-ary search tree. The reason for this
  is that we can start the induction already when we have a \Polya{} urn
  consisting of only two types. (The brave reader might even start with
  only one type, regarding all nodes as dead, and $A_1=(\chi+\rho)$.)
 In the $ m $-ary search tree, on the other hand, we always have at least $
 m-1 $ different types  as explained above, and these first types are the 
 reason why we get more  complicated eigenvalues.  
\end{rem}

\begin{proof}[Proof of \refT{recursivemulti}]

The proof is analogous to the proof of \refT{multivariate}.
By \refT{thm:eigenvaluesrecursive}, $\gl_1=\chi+\rho$ and all other eigenvalues
are negative.
Hence,  \refT{thm:normality}\ref{T0a} applies, so
\eqref{t0a} holds, with $\mu=\gl_1v_1$.
Furthermore, by \cite{JansonMean},
\begin{equation}\label{tollis2}
 \E\cX_n=n\mu+O(1).
\end{equation}

By an analogue of \eqref{xy}, 
$\bZ_{n}=\bigpar{X^{\Lambda^{1}}_{n},X^{\Lambda^{2}}_{n},\dots,X^{\Lambda^{d}}_{n}}=R\cX_n$
for some (explicit) linear operator $R$. Hence, \eqref{t0a} implies
\begin{equation}\label{dixi2}
  n\qqw\bigpar{\bZ_n-nR\mu}
=R\bigpar{n\qqw(\cX_n-\mu)}
\dto \N\bigpar{0,R\gS R'}
\end{equation}
and \eqref{tollis2} implies
\begin{equation}\label{vox2}
\bmu_n=\E\bZ_n=R\bigpar{\E\cX_n}=nR\mu+O(1),
\end{equation}
which together yield \eqref{multipref}, with the covariance matrix $R\gS R'$,
where $\gS$ is as in \eqref{t0a}.

Moreover, as said in \refR{branchjagerspreferential}, it follows from
\cite[(5.29)  and Example 6.4, in particular (6.24)]{HJbranching},
that
\begin{equation}\label{mensa2}
\E\bZ_n=n\hbmu+o(n).
\end{equation}
By \eqref{vox2} and \eqref{mensa2}, we have $R\mu=\hbmu$, 
and thus \eqref{vox2} yields \eqref{rpref}.

To see that the covariance matrix $R\gS R'$ is non-singular, suppose that, on
the contrary,
$u'R\gS R'u=0$ for some vector $u\neq0$. 
Then, by \cite[Theorem 3.6]{JansonMean},
$u'\bZ_n=u'R \cX_n$ is deterministic for every $n$.
However, this is impossible by the same construction as in the proof of 
\refT{multivariate}, taking $m=2$ (so the number of internal nodes equals
the number of keys) and then ignoring keys.
See also the proofs of 
\cite[Lemmas 3.6 and  3.17]{HolmgrenJanson}, which give this construction in
the special case of
the random recursive tree, and note that (at least for $\chi\ge0$),
the general case is the same since for a given $n$, the possible trees
$\gL_n$ are the same for all $\chi\ge0$ and $\rho$
(although their different probability distributions are different).
\end{proof}

\begin{proof}[Proof of \refT{variance2}]
Let $\Lambda^1,\dots,\Lambda^d$ be all non-random unordered (or ordered) trees with $k$ keys.
Then $Y_{n,k}=\sum_{i=1}^d X_n^{\Lambda^i}$. The result \eqref{prefksubtrees} thus
follows from \eqref{multipref}, and $\gs_k^2>0$
because the covariance matrix $\gS$ in \eqref{multipref} is non-singular.

Furthermore, summing \eqref{rpref} over the trees $\Lambda^i$  yields
\eqref{obliquuspref} 
and thus \eqref{rec} follows.
\end{proof}

\section{A \Polya{} urn to count protected nodes in $m$-ary search trees} \label{protectedpolya}
We start precisely in the same way as in \cite{HolmgrenJanson2}.
Given an $m$-ary search tree $\mathcal T_n$ with $n$ keys together with its external nodes, erase all edges that connect two internal non-leaves.  This yields a forest of small trees, where (assuming $n \ge m$) each tree has a root that is a non-leaf in $\mathcal T_n$. We regard these small trees as the balls in our generalised \Polya{} urn. The type of a ball (tree) is the type of the tree as an unordered tree, i.e., up to permutations of the children. The type of a tree in the urn is thus described by the numbers $x_i , i = 1, \dots, m$, of children of the root with $i - 1$
keys; each of these children is an external node ($i = 1$) or a leaf ($i \ge
2$), and it has itself children only when $i = m$ when it has $m$ external
children; thus we can unambiguously label the type by a vector $\xb = (x_1 ,
\dots, x_{m})$. Since the root of any of the small trees has $m$ children
(including external ones) in the original tree $\mathcal T_n$, we have $\sum_{i=1}^{m}
x_i \le m$, (with the remainder $m - \sum_{i = 1}^{m}x_i$ equal to the
number of erased edges to children in the original tree $\mathcal T_n$ that are
non-leaves). If $n \ge m$, there are no balls of type $(m, 0, \dots, 0)$,
since the root of a small tree is never a leaf in $\mathcal T_n$, and therefore the
total number of types is the number of ways to write $m$ as a sum of
$m+1$ non-negative integers minus one, i.e., $\binom{2m} m-1$.   

The activity of a type $\xb$ is the number of gaps it contains. The root has
no gaps and each child with $i - 1$ keys contributes $i$ gaps.
(These gaps belong to the child itself when $i\le m-1$ and to the $m$
external children of it when $i=m$.)
Therefore type
$\xb$ has activity $a_\xb := \sum_{i=1}^{m} i x_i$.  

Let us denote the unit vectors
\begin{equation*}
	\ee_1 = (1, 0, \dots, 0), \quad \ee_2 = (0,1,0,\dots,0), \quad \dots, \quad \ee_{m} = (0, \dots, 0, 1).
\end{equation*}
If we add a new key to a leaf with $i - 1 \le m-2$ keys, which belongs to a small tree of type $\xb$, in the \Polya{} urn this corresponds to replacing the ball by a ball of type $\xb - \ee_{i} + \ee_{i+1}$. The same holds if we add a key to an external node that is a child of the root. However, if we add a key to an external node that is a child of a (full) leaf in a tree of type $\xb$, then that leaf becomes a non-leaf, so the edge from it to the root is erased and the tree is split into two trees: one of type $(m-1, 1, 0, \dots 0)$ and the other of type $\xb - \ee_{m}$. Thus in general there may be up to $m$ different ways a small tree can be transformed, depending on which gap a new key goes into. 

\section{Proof of Theorem \ref{mainprotected}}\label{protectedmaryproof}
The idea of the proof is the same as in the case of fringe subtrees:
reducing the urn one type at a time, until we arrive at the simple urn for
counting incomplete leaves with matrix $A_{m-1}$. In addition to the types
described in Section \ref{protectedpolya}, add types
numbered~$1$ to $m$ where type $i$ is a single node with $i - 1$ keys
for $i\le m-1$ and type $m$ is the type $(m,0,\dots,0)$, \ie, a node with
$m-1$ keys and $m$ external children.
Of course, when $n \ge m$, there are no small trees of these types, but they are
used in the induction.

Recalling that the number of types described above is $\binom {2m} m-1$ and writing $q = m-1+\binom {2m} m$, let us enumerate all types from $1$ to $q$ in such a way that whenever we insert a key into a tree of type $i$ (and obtain one or two trees, as described above), the new tree which inherits the root of the old one must have index larger than $i$. Note the similarity with the fringe case where the tree grown by inserting a key must have a larger index.

There is more than one such enumeration of types, but for clarity we choose the
following one. We let the first $m-1$ types be the single nodes ordered with
an increasing number of keys, while the rest of the types are ordered in such a
way that a type with more children precedes a type with fewer
children; ordering among types with equal number of children is achieved
by treating them as numbers in base $m + 1$ (with the coordinate $x_1$ being
the most significant digit) arranged in decreasing order.  
(For types with the same number of children, this is the reverse
lexicographic order.) The vector describing a type $i \ge m$ is denoted by $\xb^{(i)}$.
Thus, for example, type $m$ is type $\xb^{(m)} = (m, 0, \dots, 0)$ and is followed by
$\xb^{(m+1)} = (m-1,1,0, \dots, 0)$, and the last type is the dead type $\xb^{(q)} = (0, \dots, 0)$.  
Figure \ref{ternarytypes} shows the different types in a ternary search tree
(i.e., $ m=3 $), except for the first 3, which are single nodes (and are the
same as the first three in Figure \ref{trinitypes}).

Let us write $\cX_n = (X_{n,1}, \dots, X_{n,q})$, where $X_{n,i}$ is the number of balls of type $i$ in the urn. The limit distribution of the urn is, of course, described by types $m+1$ to $q$, since for $n \ge m$ we have no balls of the first $m$ types.

For each $k = m - 1, \dots, q$ we define a reduced urn by considering the forest of small trees in the original urn and deleting roots of the trees of types $i > k$ (and the edges leading to their children). This replaces each tree of type $i$ by $\xb^{(i)}_j$ trees of type $j$, for each $j = 1, \dots, m$. Write
\begin{equation*}
	\cX^k_n = (X^k_{n,1}, \dots, X^k_{n,k}).
\end{equation*}
Clearly, there is a $k \times q$ matrix $P_k$ such that $\cX^k_n = P_k \cX_n$ and therefore
\begin{equation}\label{eq:DeltaX}
  \Delta \cX^k_n = \cX^k_{n+1} - \cX^k_n = P_k \left( \Delta \cX_n \right).
\end{equation}
\begin{figure}
\includegraphics[scale=0.103]{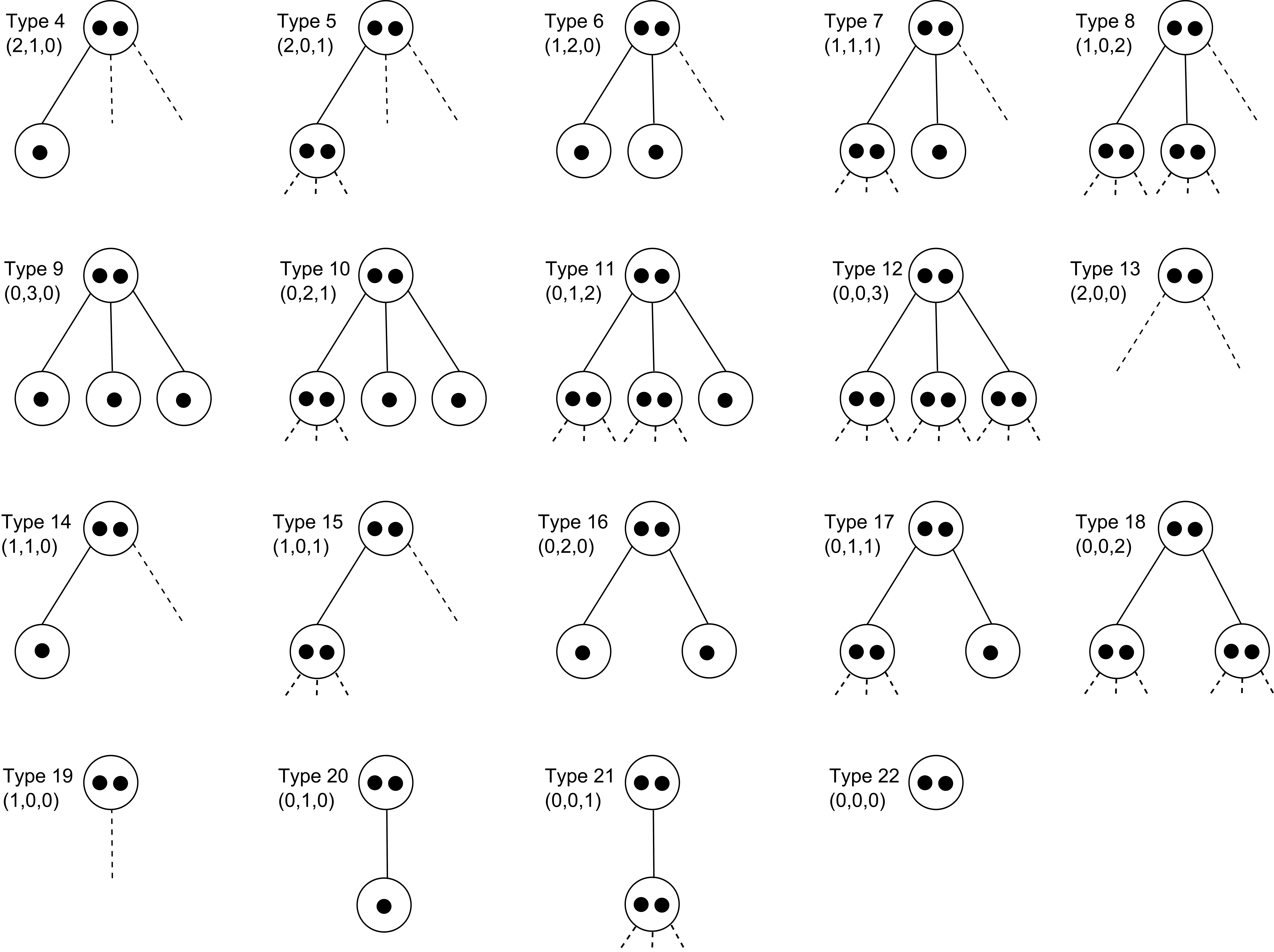}
\caption{The 19 different types characterizing protected and unprotected
  nodes in ternary search trees for $ n\geq 3 $. Type 13, type 19 and type
  22 are the only ones that include a protected node.}\label{ternarytypes}
\end{figure}

It need not be obvious that $\cX^k_n$ is actually a \Polya{} urn, in the sense that the distribution of $\Delta \cX^k_n$ depends only on the type $i$ of the ball drawn. To convince ourselves we consider three cases.

\textbf{Case 1:} If we draw a ball of type $i \in \left\{ m+1, \dots, k
\right\}$, then in the original urn this corresponds to drawing a ball of
the same type. Therefore, in view of \eqref{eq:DeltaX}, $\Delta \cX^k_n$ has
the same distribution as $P_k \xi_i$, which depends on $i$ only.

\textbf{Case 2:} If we draw a ball of type $i < m$, then in the original urn (assuming $n \ge m$) this corresponds to replacing a tree of some type $r > k$ (for which $x^{(r)}_i > 0$) by a tree of type $\xb^{(s)} := \xb^{(r)} - \ee_i + \ee_{i+1}$. The latter tree inherits the root of the former, which, by the ordering of types, implies that $s > r > k$. Therefore in the reduced urn a ball of type $i$ is replaced by a ball of type $i+1$ (with an exception when $k = m-1$ and $i = m-1$, in which case it is replaced by $m$ balls of type $1$), regardless of which particular type $r$ was involved.

\textbf{Case 3:} If we draw a ball of type $i = m$, then in the original urn
(assuming $n \ge m$) we replace a tree of type $\xb^{(r)}$ (for which
$x^{(r)}_m > 0$) by a tree of type $m+1$ and a tree of type $\xb^{(s)} := \xb^{(r)} - \ee_m$, which inherits the root and therefore satisfies $s > r > k$. In the reduced urn we remove a ball of type $m$ and what we add depends on the value of $k$: if $k \ge m +1$, we add a ball of type $m+1$; otherwise $k = m$ and we add $m$ balls: $m-1$ external nodes and one node with a single key. In either case the outcome does not depend on $r$.

To sum up, we have shown that $\cX_n^k$, $k = m-1, \dots, q$, are \Polya{}
urns with some random replacement vectors $\xi^{(k)}_i$, $i = 1, \dots,
k$. Regardless 
of $k$, the activity of a type $i$ is the number of gaps in the
corresponding tree, which we denote $a_i$. So $a_i = i$ for $i = 1, \dots,
m-1$ and $a_i = a_{\xb^{(i)}}$ for $i \ge m$. Hence the intensity matrices of
the reduced urns are  
\begin{equation*}
	A_k := (a_j\E \xi^{(k)}_{ji})_{i,j=1}^k.
\end{equation*}
As in the fringe case, the diagonal values of $A_k$ for $m \ge 3$ are easy to determine.
\begin{prop} \label{prop:protdiag}
For $m \ge 3$, $k = m-1, \dots, q$ and every type $i = 1, \dots, k$ we have $(A_k)_{ii} = -a_i$.
\end{prop}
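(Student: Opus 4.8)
The plan is to follow the proof of Proposition~\ref{prop:gaps}\textup{(i)} almost verbatim: since $(A_k)_{ii}=a_i\E\xi^{(k)}_{ii}$ by~\eqref{A}, it suffices to show that $\xi^{(k)}_{ii}=-1$ holds deterministically for every $i=1,\dots,k$, that is, that drawing a ball of type $i$ never produces a new ball of type $i$. I would prove this by reading off, from each of the three cases already used above to check that $\cX^k_n$ is a genuine \Polya{} urn, the fate of the drawn ball.

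In Case~2 ($1\le i<m$) the drawn ball is a single node with $i-1$ keys; inserting a key turns it into a single node with $i$ keys, hence into a ball of type $i+1$ when $i\le m-2$, and when $i=m-1$ into a ball of type $m=(m,0,\dots,0)$ if $k\ge m$ or into $m$ balls of type $1$ if $k=m-1$. For $m\ge3$ none of the indices $i+1$, $m$, $1$ equals $i$, so $\xi^{(k)}_{ii}=-1$. In Case~3 ($i=m$, which occurs only when $k\ge m$) the drawn ball $(m,0,\dots,0)$ carries all its gaps at its external children, so inserting a key makes one of those children internal, turning the ball into a ball of type $m+1=(m-1,1,0,\dots,0)$ if $k\ge m+1$, or into one $1$-key node plus $m-1$ external nodes if $k=m$; for $m\ge3$ none of these is type $m$, so $\xi^{(k)}_{mm}=-1$.

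The substantive case is Case~1 ($m+1\le i\le k$), where the drawn ball corresponds to drawing the same type $\xb^{(i)}$ in the original urn. Inserting a key either (a) replaces $\xb^{(i)}$ by $\xb^{(i)}-\ee_j+\ee_{j+1}$ for some $j\le m-1$ (the key lands on an external child of the root, or on a leaf child with $j-1$ keys), or (b) splits the tree into a fresh copy of type $m+1$ and the tree $\xb^{(i)}-\ee_m$ inheriting the root (the key lands on an external grandchild below a full-leaf child). In case (a) the root keeps the same number of children while the base-$(m+1)$ value of the type strictly decreases (a more significant digit $x_j$ drops by one and the less significant $x_{j+1}$ rises by one, with no carry since all digits stay in $\{0,\dots,m\}$), so by the chosen ordering of types the new type has index $>i$; in case (b) the root loses a child, so the new root-tree lies in a later block and again has index $>i$. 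Hence the only ball with index $\le i$ that could appear is a copy of type $m+1$ in case (b), and that coincides with $i$ only if $i=m+1$; but $\xb^{(m+1)}=(m-1,1,0,\dots,0)$ has no full-leaf child (its last coordinate vanishes, using $m\ge3$), so case (b) cannot be triggered by a ball of type $m+1$. Thus a type-$i$ ball is never reproduced, $\xi^{(k)}_{ii}=-1$, and $(A_k)_{ii}=-a_i$.

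I expect the only delicate point to be the bookkeeping in Case~1: verifying that both transformations (a) and (b) strictly raise the index and that the split-off piece of type $m+1$ cannot be mistaken for the drawn type. This is precisely where $m\ge3$ is used --- for $m=2$ the paths would create exceptional diagonal entries, exactly as in Proposition~\ref{prop:gaps}\textup{(ii)}.
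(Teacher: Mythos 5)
Your proposal is correct and follows essentially the same approach as the paper's proof, reducing to showing that a drawn type-$i$ ball is never reproduced and handling the one nontrivial possibility (the split-off type-$(m+1)$ piece in your case (b)) by the same $x_m$ observation used in the paper. The only small imprecision is that in Case~1, balls of types $1,\dots,m$ can also appear when a new root-tree of index $>k$ is decomposed, but since these all have index $\le m<i$ your conclusion $\xi^{(k)}_{ii}=-1$ is unaffected.
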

\begin{proof} We need to show that whenever a ball of type $i$ is drawn, it is never replaced.
	For $i \le m$ this is clear from the discussion above (cases 2 and 3), so let us assume $i > m$. Suppose that a key is inserted into a child with less than $m-1$ keys. Then in the original urn a ball of type $i$ is replaced by a ball of higher index $j$, so in the reduced urn $i$ is replaced with either a ball of type $j$ or some balls with types at most $m$, none of which coincide with type $i$. If a key is inserted into a full child of the root (which means $x^{(i)}_m > 0$), then also a ball of type $\xb^{(m+1)} = (m-1,1,0,\dots,0)$ is added. Since its last coordinate is zero, it cannot coincide with type $i$.
\end{proof}

The number of protected nodes is the total number of balls of types $(i, 0, \dots, 0)$, $i = 0, \dots, m-1$.
The urn $\cX^{q-1}_n$ ignores the dead type $(0, \dots, 0)$. However, writing
$b_i$ for the number of keys in a tree of type $i$, the number of balls of
the dead type can be expressed as an affine function of the other types, namely
\begin{equation*}
	X_{n,q} = \frac{1}{m-1} \left( n - \sum_{i=m+1}^{q-1} b_i X_{n,i}\right).
\end{equation*}
Consider the urn $\cX_n^{q - m - 1}$ obtained by deleting the roots of trees
with at most one child. For $n \ge m$, the first $m$ coordinates of
$\cX_n^{q-m-1}$ (single nodes and the type $(m, 0, \dots, 0)$) are equal to
the last $m$ coordinates of $\cX_n^{q - 1}$ (roots with one
child). Consequently, the urn described in \refS{protectedpolya} but with
the dead type ignored, and thus $\binom{2m}m-2=q-m-1$ types, is isomorphic
to the urn $\cX_n^{q-m-1}$ and has thus also intensity matrix $A_{q-m-1}$ (up
to relabelling the types).
In particular, the number of
protected nodes is an affine function of $\cX_n^{q- m -  1}$ and therefore, 
in order to prove asymptotic normality of the number of
protected nodes, it is enough to prove asymptotic normality of 
$\cX_n^{q - m  -1}$. To deduce this from Theorem \ref{thm:normality},  
it remains to show that $\Re\lambda<\lambda_1/2 $
for each eigenvalue $ \lambda\neq \lambda_1 $, 
which, for $m\le26$, follows from the next
theorem.
\begin{thm}
\label{thm:proteigenvalues}
Let $m \ge 2$. The eigenvalues of $A_{q-m-1}$ are the roots of the
polynomial $\phi_m(\lambda) = \prod_{i = 1}^{m-1} (\lambda + i) - m!$ plus
the multiset 
\begin{equation*}
\left\{ -a_i : i = m, m + 1, \dots, q - m - 1 \right\}.
\end{equation*} 
\end{thm}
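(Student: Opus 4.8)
The plan is to mimic the inductive eigenvalue-inheritance argument of \refT{thm:eigenvalues} exactly, now applied to the chain of reduced urns $\cX^k_n$ for $k = m-1, \dots, q-m-1$. The base case is $k = m-1$: here the reduced urn counts, for $n\ge m$, only single nodes with $i-1$ keys, $i = 1, \dots, m-1$, and its dynamics are the same as the classical ``incomplete leaves'' urn with matrix $A_{m-1}$ of \eqref{AWmatrix}, whose characteristic polynomial is $\phi_m(\lambda)$. (One must check that deleting the roots of \emph{all} trees with $\le q-m-1$ indices indeed leaves precisely this $(m-1)$-type urn; this is exactly the identification made in \refS{protectedmaryproof}, where the first $m$ coordinates of $\cX^{q-m-1}_n$ are matched with the one-child root types of $\cX^{q-1}_n$, and it propagates down to $k=m-1$.) Then for each induction step $m-1 \le k < q-m-1$ I would produce the linear map $T:\bbR^{k+1}\to\bbR^{k}$ with $\cX^k_n = T\cX^{k+1}_n$ — this is just the coordinate projection $P_k$ followed by the restriction $P_{k+1}\to P_k$, and it is onto because dropping one more type's roots only redistributes mass among already-present smaller types — and derive the intertwining relation $TA_{k+1} = A_k T$ from the one-step mean dynamics, using that the total activity $a^{k+1}\cdot\cX_0^{k+1} = a^k\cdot\cX_0^k$ (total number of gaps) is preserved under $T$. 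Transposing, $T'$ injectively embeds each generalised eigenspace of $A_k$ into that of $A_{k+1}$, so $A_{k+1}$ inherits all eigenvalues of $A_k$ with multiplicity, leaving exactly one new eigenvalue $\lambda_{k+1}$.

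To pin down $\lambda_{k+1}$ I would use the trace identity. Since $m\ge3$ for the substantive cases (the $m=2$ case should be handled by the same bookkeeping or dismissed as trivial, since then $q-m-1 = \binom{4}{2}-2 = 4$ and the relevant urn is small), Proposition \ref{prop:protdiag} gives $(A_k)_{ii} = -a_i$ for every $i$, hence $\tr(A_k) = -\sum_{i=1}^k a_i$. Combined with $\tr(A_{k+1}) = \tr(A_k) + \lambda_{k+1}$ from eigenvalue inheritance, this forces $\lambda_{k+1} = -a_{k+1}$, completing the induction. Evaluating at $k = q-m-1$ yields that the eigenvalues of $A_{q-m-1}$ are the roots of $\phi_m$ together with $\{-a_i : i = m, \dots, q-m-1\}$, as claimed. (The roots of $\phi_m$ contribute the $m-1$ eigenvalues from the base case, and $\lambda_1 = 1$ among them by \refR{Racdotmary} applied to this urn, since here too $a\cdot\xi_i = 1$ deterministically — each step adds exactly one gap.)

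The main obstacle is not the linear algebra, which is a verbatim replay of \refT{thm:eigenvalues}, but rather the combinatorial verification that the reduced urns $\cX^k_n$ are genuine \Polya{} urns all the way down the chain, \emph{and} that the type which ``inherits the root'' always receives a strictly larger index — this is what makes the projections $T$ well-defined as urn morphisms and guarantees $(A_k)_{ii} = -a_i$ with no exceptional path-type corrections (unlike the $m=2$ fringe case in Proposition \ref{prop:gaps}). Most of this has already been done: the three-case analysis in \refS{protectedmaryproof} establishes the \Polya{} property of each $\cX^k_n$, and Proposition \ref{prop:protdiag} supplies the diagonal entries for $m\ge3$. So the proof is essentially an assembly of these pieces in the inductive frame of \refT{thm:eigenvalues}; the one point requiring genuine care is confirming that when we pass from $A_{q-1}$ (which ignores only the dead type) down to $A_{q-m-1}$ (which ignores the dead type and all one-child roots), no eigenvalue among the $\{-a_i\}$ is spuriously gained or lost — but this is handled by the isomorphism of urns noted in \refS{protectedmaryproof}, identifying $A_{q-m-1}$ with the intensity matrix of the $(\binom{2m}{m}-2)$-type urn of \refS{protectedpolya}.
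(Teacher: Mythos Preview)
Your proposal is correct and follows essentially the same route as the paper: induction on $k$ from $m-1$ to $q-m-1$, base case $A_{m-1}$ identified with \eqref{AWmatrix}, the intertwining $TA_{k+1}=A_kT$ to inherit eigenvalues, and the trace identity via Proposition~\ref{prop:protdiag} to force $\lambda_{k+1}=-a_{k+1}$. The only differences are cosmetic: the paper writes down $T$ explicitly (identity with the column $(x_1,\dots,x_m,0,\dots,0)'$ appended), and for $m=2$ it simply quotes the eigenvalues $\{1,-2,-3,-4\}$ computed in \cite{HolmgrenJanson2} rather than redoing the bookkeeping; your final paragraph about ``passing from $A_{q-1}$ down to $A_{q-m-1}$'' is unnecessary, since the induction runs upward and the isomorphism with the urn of \refS{protectedpolya} is used only afterwards in the proof of \refT{mainprotected}.
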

\begin{proof}
For  $m = 2$, the eigenvalues were determined in
\cite{HolmgrenJanson2} 
as \set{1,-2,-3,-4} (when we ignore the dead type), which agrees with 
the statement.
	
For $m \ge 3$ the proof goes along the same lines as the proof of Theorem
\ref{thm:eigenvalues}; we again show by induction the corresponding
statement for $A_k$ for $m-1\le k\le q-m-1$.
Note that urn $\cX_n^{m-1}$ has the same meaning as
in the fringe case, since $X_{n,i}^{m-1}$, $i = 1, \dots, m-1$, stands for
the number of nodes with $i-1$ keys. Therefore $A_{m-1}$ is defined by
\eqref{AWmatrix} and has characteristic polynomial $\phi_m(\gl)$.
	
We use Proposition \ref{prop:protdiag} instead of Proposition
\ref{prop:gaps}. The linear map $T$ such that $\cX_n^k  = T\cX_n^{k+1}$ is a
$(k+1) \times k   $ matrix obtained by appending to an identity matrix a
column $(x_1, \dots, x_m, 0, \dots, 0)'$, where $\xb^{(k+1)} = (x_1, \dots,
x_m)$ is the vector describing type $k+~1$. The rest of the proof is
identical. 
\end{proof}

\begin{proof}[Proof of \refT{mainprotected}]
This is similar to the other proofs. 
As in \refS{proofs}, for $m\le26$, the roots $\gl\neq\gl_1$ of $\phi_m$ satisfy
 $\Re\gl\le\gam_m<\frac12=\gl_1/2$, 
where $\gam_m$ is given by \eqref{gamm},
and thus \refT{thm:proteigenvalues}  shows that
\eqref{gammm} holds
for all eigenvalues $\gl\neq\gl_1$ of $A_{q-m-1}$.
As said above, $A_{q-m-1}$ is also the intensity matrix of the urn in
\refS{protectedpolya}, without the dead type,
and if we reinstate the dead type, we just add one eigenvalue 0 (since the
new column in $A$ is identically 0). 
Hence \refT{thm:normality} applies to the urn in \refS{protectedpolya}, and
implies,
since $Z_{n}$ is the total number of balls of the $m$ types $(i,0,\dots,0)$,
$0\le i\le m-1$, 
\begin{equation}
  n\qqw\bigpar{Z_n-\mu'n}\dto \cN(0,\gss)
\end{equation}
for some $\mu'$ and $\gss\ge0$.
Furthermore, it follows from \cite{JansonMean} and \eqref{gammm}
that 
(for any $m$)
\begin{equation}\label{muprot1}
\E(Z_{n})=\mu'n+O\bigpar{n^{\gammm}}.
\end{equation}
Moreover, it follows from
\cite[Theorems 7.11, 10.1 and 10.3]{HJbranching} that
$Z_{n}/n\asto \mu_m$ given by \eqref{muprot},
and consequently, by dominated convergence 
(see also \cite[Remark 5.19]{HJbranching})
\begin{equation}\label{muprot0}
\E(Z_{n})=\mu n+o(n).
\end{equation}
By \eqref{muprot1} and \eqref{muprot0}, $\mu'=\mu$ (for any $m$).
When
$m\le26$, we have
$\gam_m<\frac12$, and thus  \eqref{muprot1} implies \eqref{muprot0qq},
and thus $\E Z_{n}$ can be replaced by $\mu' n=\mu n$ in
\eqref{prot}. 

To see that $\gss_m>0$, it suffices by \cite[Theorem 3.6]{JansonMean} to
show that $\Var(Z_{n})>0$ for some $n$. This is easy; for example, with
$n=2m-1$ keys, we can have either 0 or 1 protected node.
\end{proof}

\section{Examples with explicit calculations of variances}\label{examples}

We give some examples with explicit calculations (done using Mathematica).

\subsection{Example of Theorem \ref{variance} when $ m=3 $ and $ k=4 $}\label{ex1}

We consider the case when we want to evaluate $ \sigma_4^{2} $ in Theorem
\ref{variance} in the case of a random ternary search tree ($m=3$).
We use the construction of the \Polya{} urn in  \refE{fringektrees}, which
gives an urn with the following 6 different (living) types:
{\addtolength{\leftmargini}{-10pt}
\begin{enumerate}
 \renewcommand{\labelenumi}{\textup{\arabic{enumi}:}}%
 \renewcommand{\theenumi}{\textup{\arabic{enumi}}}%
\item 
 An empty node.  
\item 
A node with one key.
\item 
 A node with two keys and three external children. 
\item 
A tree with a root holding two keys and one child holding one key, plus two
external children. 
\item 
 A tree with a root holding two keys and two
children holding one key each, plus one external child. 
\item 
 A tree with a root holding two keys and one child holding two keys, 
plus two external children of the root and three external children of the leaf.
\end{enumerate}}
See Figure \ref{trinitypes} for an illustration of these types.

\begin{figure}
\includegraphics[scale=0.6]{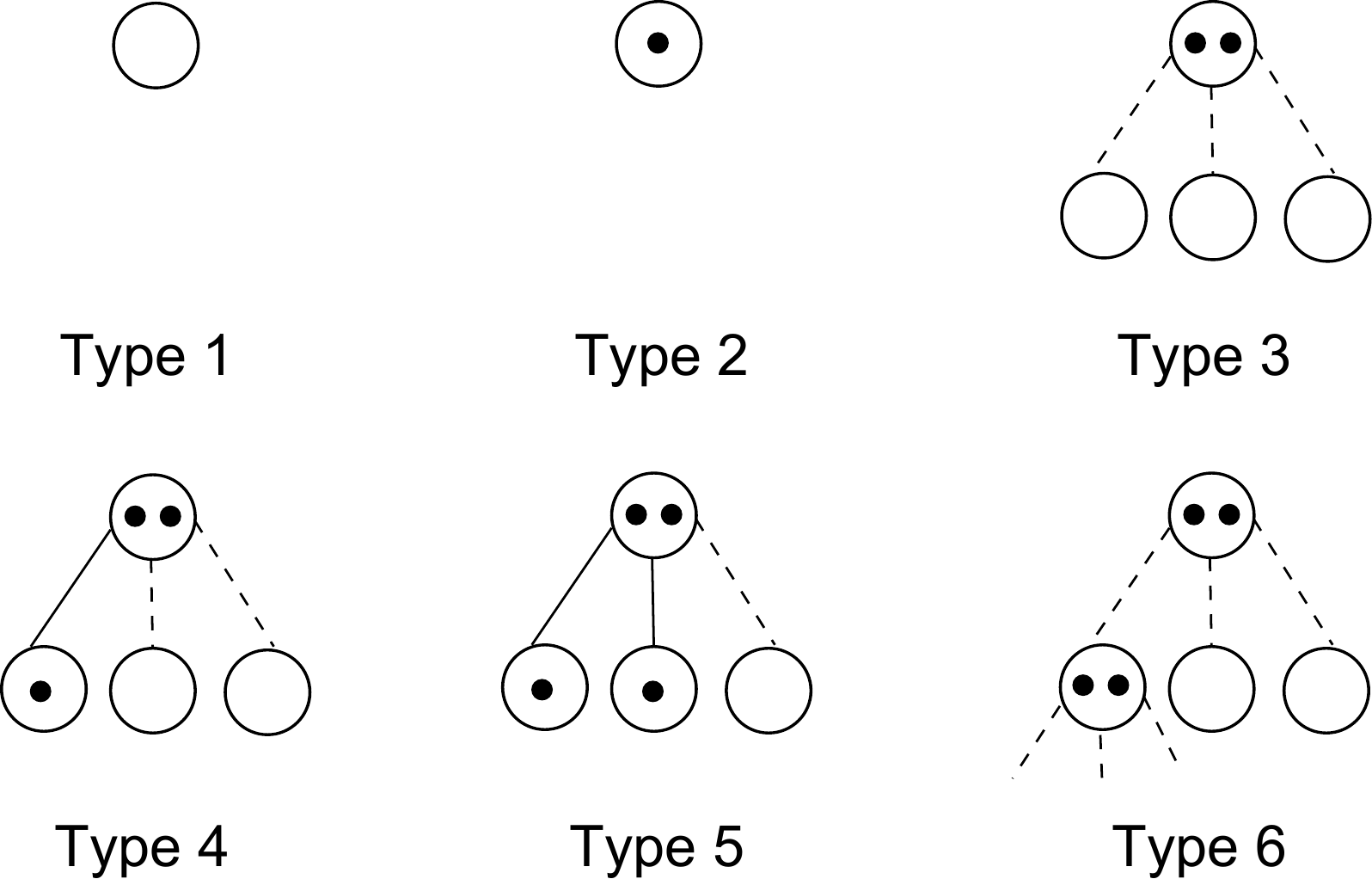}
\caption{The different types for counting the number of the fringe subtrees with four keys in a ternary  search tree.}\label{trinitypes}
\end{figure} 

The activities of the types are $1,2,3,4,5,5$. We can easily describe the intensity matrix, first noting that if we draw a
type $ k $  for $ k\leq 3 $ it is replaced by one of type $ k+1 $. If we
draw a type 4 it is replaced by one of type 5 with probability 1/2 and one
of type 6 with probability 1/2. If we draw a type 5 it is replaced by three
of type 2 with probability 1/5, and one of each of the types 1, 2 and 3 with
probability 4/5; see Figure \ref{trinitrans1} for an illustration.  Finally
if we draw a type 6 it is replaced by one each of 
the types 1, 2 and 3 with probability 2/5, and two of type 1 and one of type
4 with probability 3/5; see Figure \ref{trinitrans2} for an illustration.  

\begin{figure}
\includegraphics[scale=0.5]{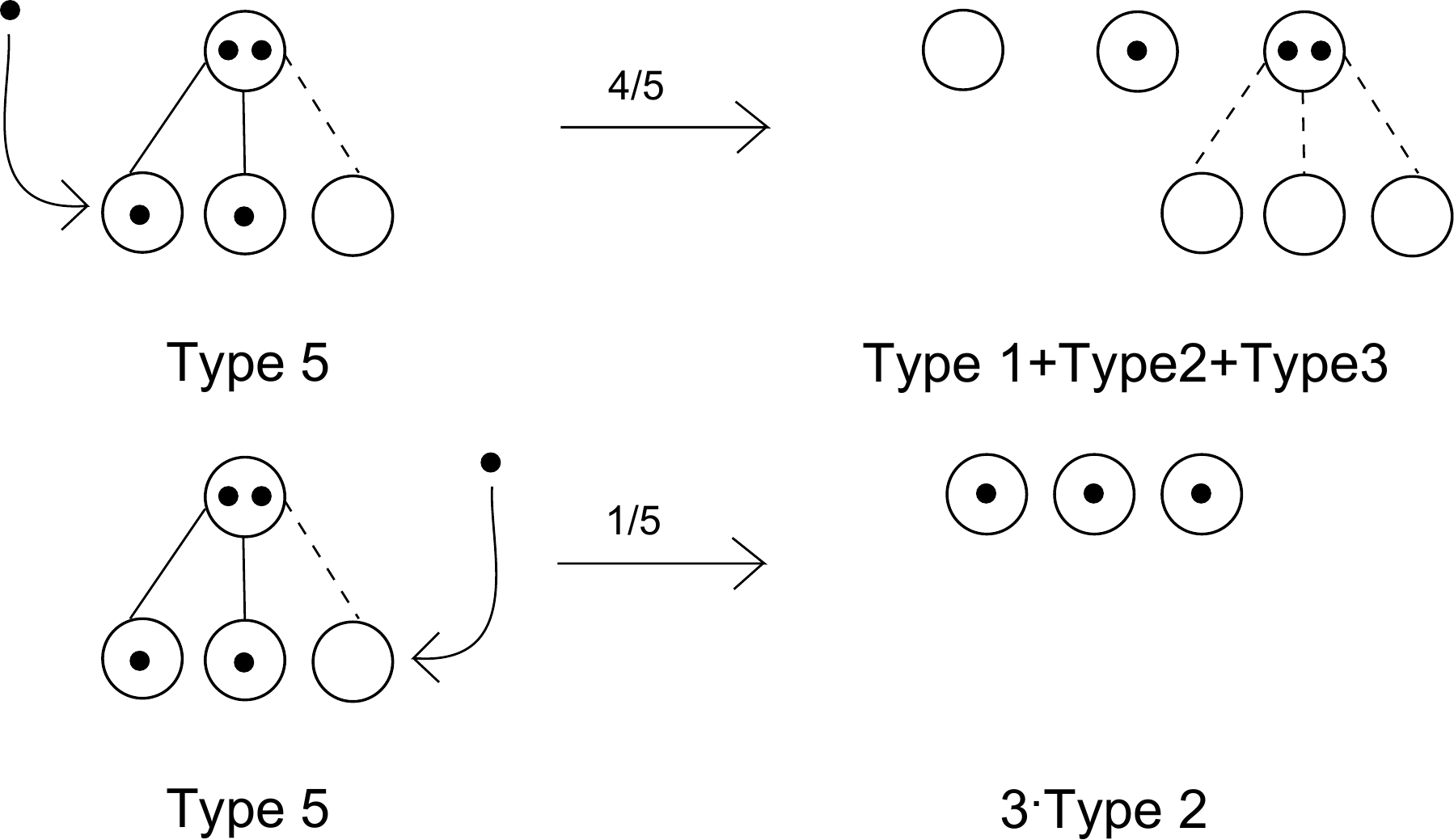}
\caption{The two possibilities for adding a key to 
a tree of type 5
in Figure \ref{trinitypes}.}
\label{trinitrans1}
\end{figure}

\begin{figure}
\includegraphics[scale=0.5]{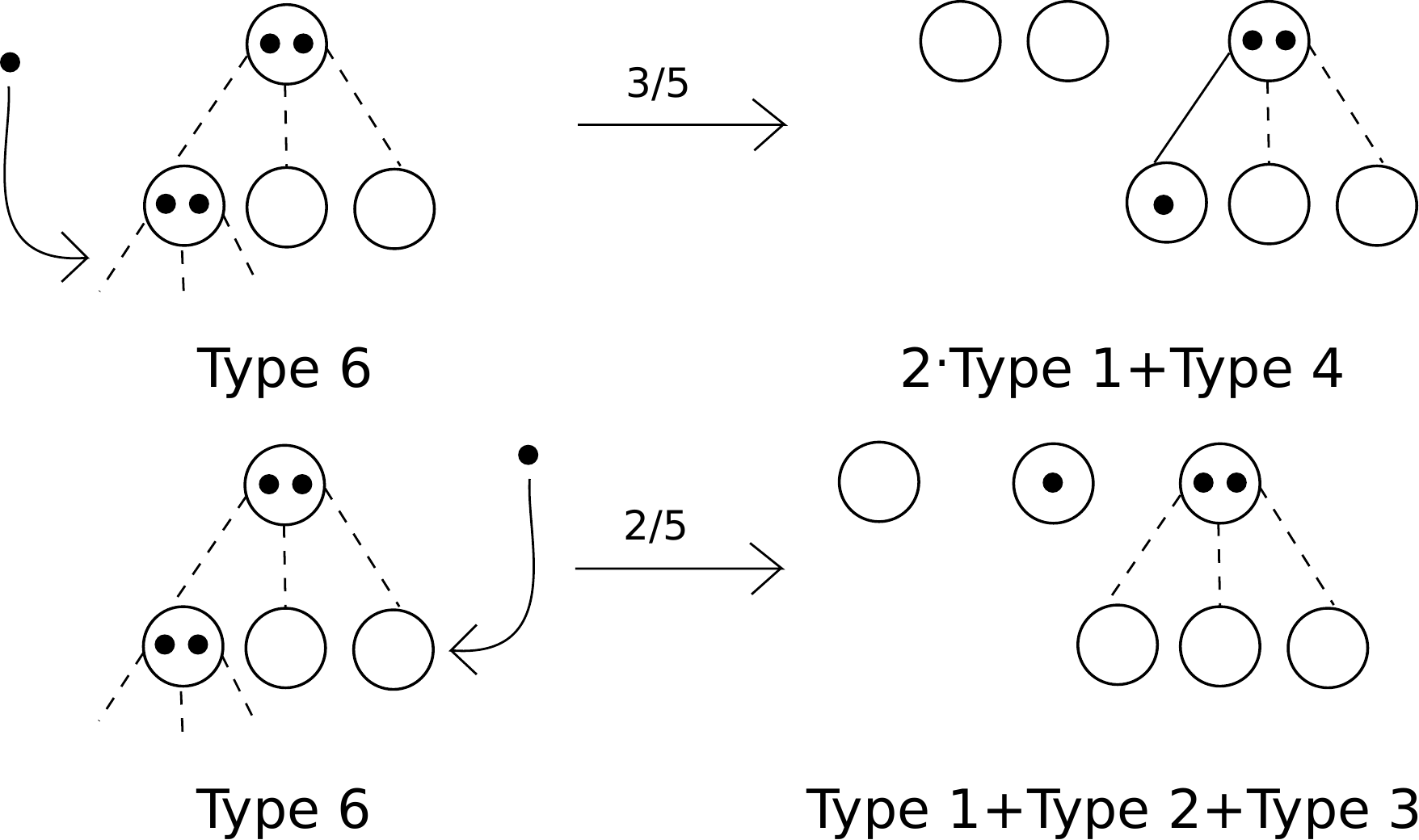}
\caption{The two possibilities for adding a key to 
a tree of type 6
in Figure \ref{trinitypes}}
\label{trinitrans2}
\end{figure}
     
Thus, we get the intensity matrix $ A $ in (\ref{A}) as
\begin{equation}
  A= \left(
\begin{array}{rrrrrr}
 -1 & 0 & 0 & 0 & 4 & 8 \\
 1 & -2 & 0 & 0 & 7 & 2 \\
 0 & 2 & -3 & 0 & 4 & 2 \\
 0 & 0 & 3 & -4 & 0 & 3 \\
 0 & 0 & 0 & 2 & -5 & 0 \\
 0 & 0 & 0 & 2 & 0 & -5 \\
\end{array}
\right).  
\end{equation}
The eigenvalues are,
by direct calculation or by \refT{thm:eigenvalues}, 
\begin{equation}
1, -3, -4, -4,  -5, -5.
\end{equation}
(We know already that $\gl_1=1$, as was noted in \refR{Racdotmary}.)

Furthermore, by \refR{Racdotmary},
the left eigenvector $u_1= a=(1, 2, 3, 4, 5, 5) $.
The right eigenvector $v_1$, with the normalization \eqref{normalised}, is 
found to be
\begin{equation}\label{eq:v1}
 v_1=\Bigpar{\frac{3}{25}, \frac{1}{10}, \frac{2}{25}, \frac{3}{50},
   \frac{1}{50}, \frac{1}{50}}.   
\end{equation}
Note that $\mu=v_1$ in the proof of \refT{multivariate} (of which Theorem \ref{variance} is a direct consequence), since $\gl_1=1$.
The fringe subtrees with 4 keys in the random ternary search tree
correspond to the last two types, so $Y_{n,4}=X^{T^5}_n+X_n^{T^6}$.
Hence, the expected number of subtrees with 4 keys is, see \eqref{tollis},
\begin{equation}
  \E Y_{n,4} = (\mu_5+\mu_6)n+o\bigpar{n\qq}= \frac{1}{25}n+o\bigpar{n\qq}. 
\end{equation}
 Note that this gives the same answer as Theorem \ref{variance}
(where results from branching processes were applied to deduce the
expectation), since the asymptotic expectation in \eqref{main2b} is
$$ \frac{n}{(H_3-1)(4+1)(5+1)}=\frac{n}{25}.$$ 

To calculate the variance $ \sigma^2_4 $, we
calculate the covariance matrix $\gS$ in Theorem \ref{thm:normality} 
by Theorem \ref{thm:normality}\ref{T0b}; thus we first
calculate $ B_i $, $B$ and $\Sigma_I $ in (\ref{Bi})--(\ref{Sigma}).
We  describe the calculations briefly;
for further details on how the calculations are performed, 
we refer to \cite{HolmgrenJanson2}, where
Theorem \ref{thm:normality}\ref{T0b} was applied to the urn in
Figure \ref{ternarytypes} in \refS{protectedmaryproof} above
to count the number of protected nodes in a random ternary search tree. 
Since $A$ is diagonalisable,  it is, as an alternative,  also possible to 
calculate $\Sigma$ by Theorem \ref{thm:normality}\ref{T0c}.

We thus first calculate  
$ B_i=\E(\xi_i\xi_i') $ in (\ref{Bi}). 
As an example we have (the other cases are analogous)
\begin{align}
B_5&=\tfrac{1}5\cdot b_1b_1'+\tfrac{4}5\cdot b_2b_2',
\intertext{where}
   b_1&=(0, 3, 0, 0, -1, 0)'
\qquad \text{and}
\qquad
b_2=(1, 1, 1, 0, -1, 0)'. 
\end{align}
%
We then calculate $B$ by \eqref{B} and evaluate the integral in
(\ref{Sigma}), which 
yields $\Sigma_I$. Finally, $\Sigma=\Sigma_I$ by 
Theorem \ref{thm:normality} with $c=1$.
The result is 
 \begin{align}\label{covariance4tree}
 \arraycolsep=3pt
\def\arraystretch{2}
 \Sigma=\left(
\begin{array}{rrrrrr}
 \frac{29017}{259875} & -\frac{117371}{10395000} & -\frac{44311}{5197500} &
   -\frac{2143}{945000} & -\frac{28289}{5197500} & -\frac{28289}{5197500} \\
 -\frac{117371}{10395000} & \frac{7379}{83160} & -\frac{34927}{5197500} &
   -\frac{3907}{236250} & -\frac{166037}{20790000} & -\frac{166037}{20790000} \\
 -\frac{44311}{5197500} & -\frac{34927}{5197500} & \frac{159241}{2598750} &
   -\frac{4747}{236250} & -\frac{84709}{10395000} & -\frac{84709}{10395000} \\
   -\frac{2143}{945000} & -\frac{3907}{236250} & -\frac{4747}{236250} &
   \frac{39227}{945000} & -\frac{13309}{1890000} & -\frac{13309}{1890000} \\
 -\frac{28289}{5197500} & -\frac{166037}{20790000} & -\frac{84709}{10395000} &
   -\frac{13309}{1890000} & \frac{22613}{1299375} & -\frac{6749}{2598750} \\
 -\frac{28289}{5197500} & -\frac{166037}{20790000} & -\frac{84709}{10395000} &
   -\frac{13309}{1890000} & -\frac{6749}{2598750} & \frac{22613}{1299375} \\
\end{array}
\right).\end{align}

However, to calculate $ \sigma^{2}_4 $, we only need the submatrix
\begin{align}\label{covariancesub4}
\Delta=\left(
\begin{array}{rr}
 \sigma_{5,5} &  \sigma_{5,6} \\[10pt]
  \sigma_{6,5} &  \sigma_{6,6}
\end{array}
\right)=
\left(
\begin{array}{rr}
 \frac{22613}{1299375} & -\frac{6749}{2598750} \\[10pt]
 -\frac{6749}{2598750} & \frac{22613}{1299375}
\end{array}
\right).
\end{align}
Summing the $\sigma_{i,j}  $  in \eqref{covariancesub4}, 
which is equivalent to calculating $ (1,1)\Delta(1,1)' $, we find
  $$\sigma^2_4=\frac{38477}{1299375}.$$

 Note that we can use this urn to calculate the asymptotic variance for the
 number of leaves in the random ternary search tree, which was evaluated
 in \cite[Theorem 4.1]{HolmgrenJanson2}. 
 We get   
 \begin{equation}
(0, 1, 1, 1, 2, 1)\Sigma (0, 1, 1, 1, 2, 1)'=\frac{89}{2100}.   
 \end{equation}
 We could also use this urn to evaluate 
 \begin{align}
\sigma^2_3&=(0, 0, 0, 1, 0, 0)\Sigma(0, 0, 0, 1, 0, 0)'=\frac{39227}{945000}, 
\\
\sigma^2_2&=(0, 0, 1, 0, 0, 1)\Sigma(0, 0, 1, 0, 0, 1)'= \frac{131}{2100},
\\
\sigma^2_1&=(0, 1, 0, 1, 2, 0)\Sigma(0, 1, 0, 1, 2, 0)'= \frac{8}{75}.
 \end{align}

\subsection{Example of Theorem \ref{multivariate} when $ m=4 $} \label{ex2}
 
We consider the random quaternary search tree ($ m=4 $) 
as an ordered tree.
Suppose that we consider two fringe subtrees in this tree. Let the first one
$T^1$ consist of 5 keys i.e., $k_1=5$, so that the root holds three keys
with its first two children holding one key each, and with its remaining two
external children to the right. Let the second one $T^2$ consist of 6 keys
i.e., $k_2=6$, so that the root holds three keys, and with its first child
holding three keys and thus also having four external children, and with the
remaining three external children of the root to the right. 

 We use the construction of the \Polya{} urn in  \refS{polyafringe} which
gives an urn with the following 9 different (living) types,
 see Figure \ref{quadritypes}:
{\addtolength{\leftmargini}{-10pt}
\begin{description}
\item [\rm1--4]
For $k\le4$, type $k$ is a node with $ k-1 $ keys; the fourth type also has
four external children.   
\item [\rm5]
A root with three keys with its four children having $(1,0,0,0)$ keys.
\item [\rm6]
A root with three keys with its four children having $(0,1,0,0)$ keys.
\item [\rm7] 
A root with three keys with its four children having $(1,1,0,0)$ keys.
\item [\rm8]
A root with three keys with its four children having $(2,0,0,0)$ keys.
\item[\rm9]
A root with three keys with its four children having $(3,0,0,0)$ keys, and
the first child having four external children.
\end{description}}
     
 \begin{figure}
\includegraphics[scale=0.5]{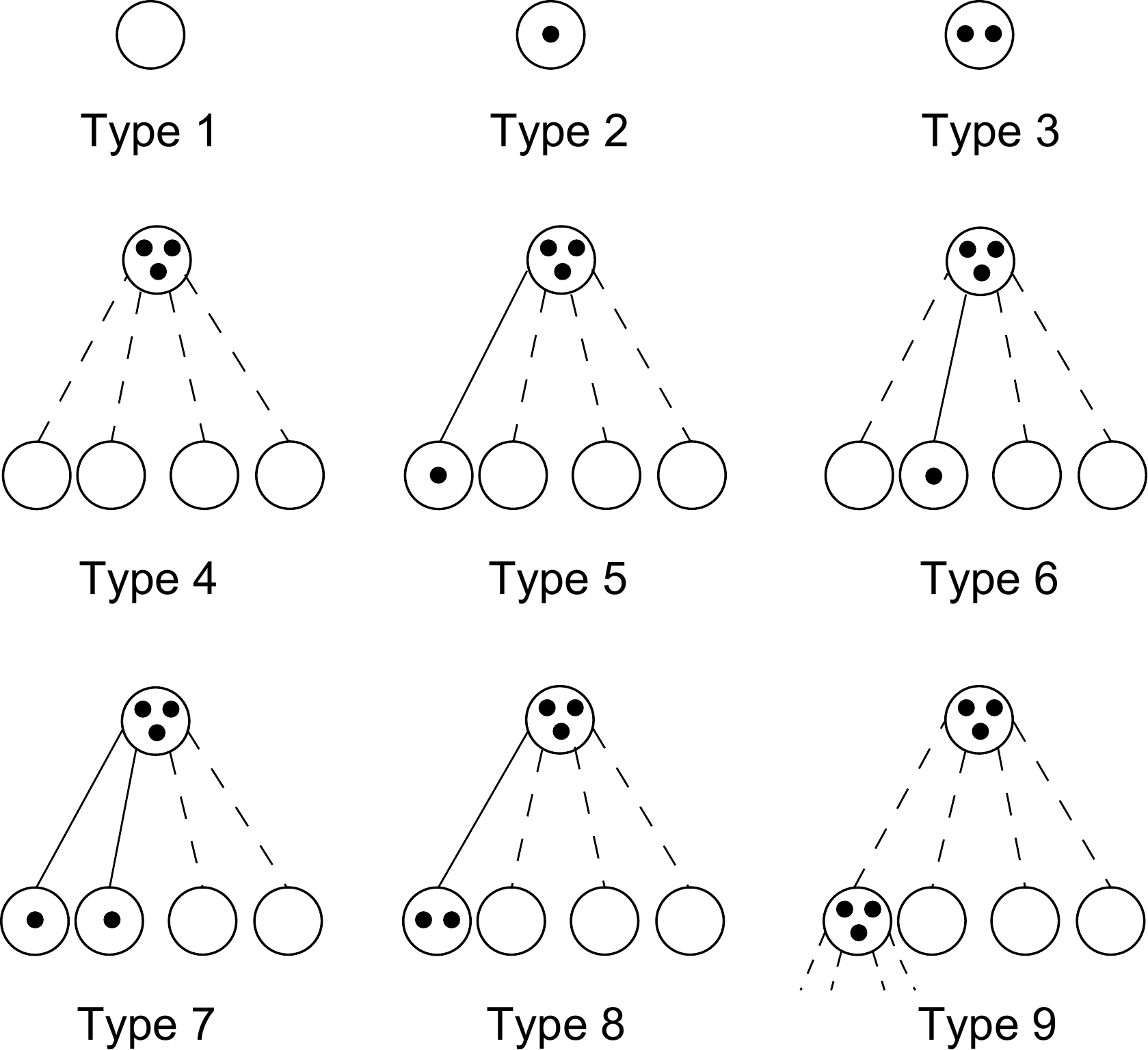}
\caption{The different types used to find the joint distribution of fringe
  trees that are isomorphic to type 7 and type 9 in a quaternary search
  tree.}
\label{quadritypes}
\end{figure}  
  
We get the intensity matrix  as in the example in Section \ref{ex1}.
We describe one example of a transition, the others are similar. If we draw a type 5 it is replaced by one of type 7 with probability 1/5, and one of type 8 with probability 2/5, and two of type 1 and  two of type 2 with probability 2/5; see Figure \ref{quadritrans}.

\begin{figure}
\includegraphics[scale=0.4]{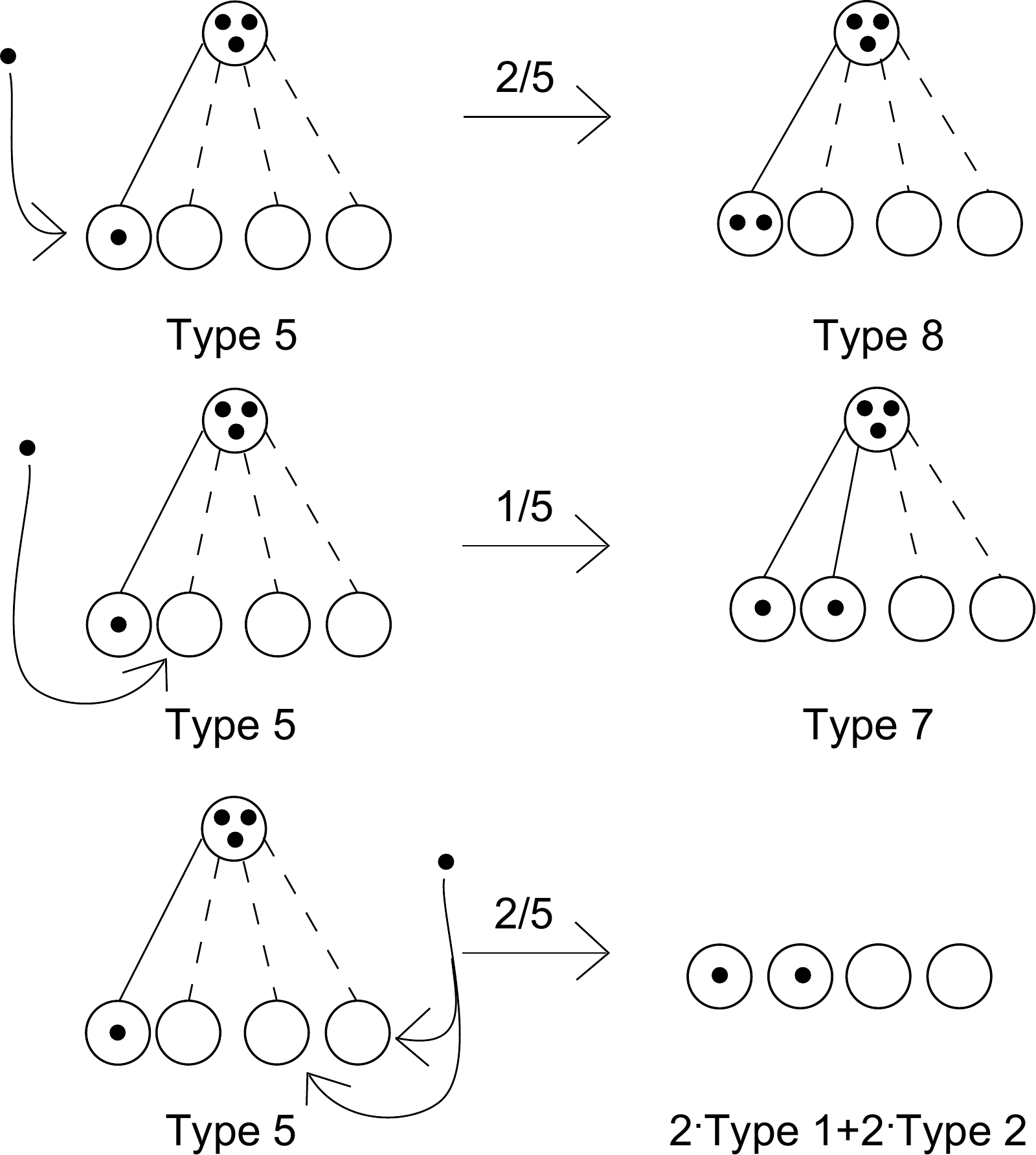}
\caption{The three possibilities for adding a key to 
a tree of type 5
in Figure \ref{quadritypes}.}
\label{quadritrans}
\end{figure}


The intensity matrix is 
\begin{align}\label{Atern} 
A=\left(
\begin{array}{rrrrrrrrr}
 -1 & 0 & 0 & 6 & 4 & 10 & 10 & 6 & 24 \\
 1 & -2 & 0 & 2 & 4 & 4 & 10 & 3 & 5 \\
 0 & 2 & -3 & 0 & 0 & 2 & 4 & 3 & 0 \\
 0 & 0 & 3 & -4 & 0 & 0 & 0 & 0 & 3 \\
 0 & 0 & 0 & 1 & -5 & 0 & 0 & 0 & 1 \\
 0 & 0 & 0 & 1 & 0 & -5 & 0 & 0 & 1 \\
 0 & 0 & 0 & 0 & 1 & 1 & -6 & 0 & 0 \\
 0 & 0 & 0 & 0 & 2 & 0 & 0 & -6 & 0 \\
 0 & 0 & 0 & 0 & 0 & 0 & 0 & 3 & -7 \\
\end{array}
\right).
\end{align} 

The eigenvalues are,
by direct calculation or by \refT{thm:eigenvalues}, 
\begin{equation}
1, -\tfrac72 + \tfrac{\sqrt{23}}2i, -\tfrac72 - \tfrac{\sqrt{23}}2i,
-4,-5, -5,-6, -6, -7.  
\end{equation}
Note that the activity vector $ a=(1, 2, 3, 4, 5, 5, 6, 6, 7) $ and it again
follows that the left eigenvector $ u_1=a $.
The right eigenvector $v_1$,
with the normalization 
\eqref{normalised}, is calculated as
\begin{equation}
  \label{v1quattor}
v_1=\Bigpar{\frac{113}{520}, \frac{61}{455}, \frac{34}{455}, \frac{33}{728}, \frac{1}{130}, \frac{1}{130}, \frac{1}{455}, \frac{1}{455}, \frac{3}{3640}}.
\end{equation}
We see from this vector that the expected number of fringe subtrees
 of the two types that we consider is 
$ (\frac{1}{455} +\frac{3}{3640})n+o(n)=\frac{11}{3640}n+o(n) $, 
since the two types are type 7 and type 9.
We can verify that this gives the same answer as Theorem \ref{multivariate}
(where results from branching processes were applied to deduce the
expectation), since the vector $\hbmu$  in \eqref{hmu} has coordinates
\begin{align}
\frac{\P(\mathcal{T}_{5}=T^{1})}{(H_4-1)(5+1)(5+2)} &=
\frac{\frac35\cdot\frac24\cdot\frac13}{(\frac12+\frac13+\frac14)\cdot6\cdot7}
=\frac{1}{455}
\intertext{and}
 \frac{ \P(\mathcal{T}_{6}=T^{2})}{(H_4-1)(6+1)(6+2)}&=
\frac{\frac36\cdot\frac25\cdot\frac14}{(\frac12+\frac13+\frac14)\cdot7\cdot8}
=\frac{3}{3640}.
\end{align}

We proceed by calculating the covariance matrix $ \Sigma $.
It again turns out that $A$ is diagonalisable, and this time we apply Theorem
\ref{thm:normality}\ref{T0c}. We 
again have to  calculate the matrix $ B $ in (\ref{B}), and for this 
we have to calculate  
$ B_i=\E(\xi_i\xi_i') $ in (\ref{Bi}). 
For example, 
\begin{align}
B_7&=\tfrac{4}6\cdot b_1b_1'+\tfrac{2}6\cdot b_2b_2',
\intertext{where}
   b_1&=(2, 1, 1, 0, 0, 0, -1, 0, 0)'
\qquad\text{and}\qquad
b_2=(1, 3, 0, 0, 0, 0,-1, 0, 0)'. 
\end{align}
Having calculated $B_1,\dots,B_9$ in this way, we obtain the matrix
$B$ from \eqref{B}
and then  the covariance matrix $ \Sigma $ by \eqref{simpleSigma}; 
the result is shown in Appendix \ref{appA}.  
However, to evaluate the joint distribution of the fringe subtrees $ T^1 $ and $ T^{2} $ described above, we only need the submatrix
\begin{align}
\Gamma=\left(
\begin{array}{cc}
 \sigma_{7,7} &  \sigma_{7,9} \\[10pt]
  \sigma_{9,7} &  \sigma_{9,9}
\end{array}
\right)=
\left(
\begin{array}{cc}
 \phantom- \frac{157523}{72872800} & -\frac{2884319}{194424630400} \\[10pt]
 -\frac{2884319}{194424630400} & \phantom- \frac{5681341}{6943736800} 
\end{array}
\right).
\end{align}

Note that we can also use this urn to calculate the asymptotic variance
$\sigma_1^2$  for 
  the number of leaves in the random quaternary search tree;
 we get   
\begin{align}\label{leaf4}
\sigma_1^2 = (0, 1, 1, 1, 1, 1, 2, 1, 1)\Sigma (0, 1, 1, 1, 1, 1, 2, 1, 1)'=\frac{5276}{122525}.
 \end{align}
We can alternatively obtain  $\sigma_1^2$ by using the simple \Polya{} urn with the four types illustrated in Figure \ref{quadrileaf}. 
(As another simple example of Theorem \ref{multivariate} and the
construction in \refS{polyafringe},  
see also \cite[Section 5.2]{HolmgrenJanson2} for a minor variation of this
urn.)  
\begin{figure}
\includegraphics[scale=0.6]{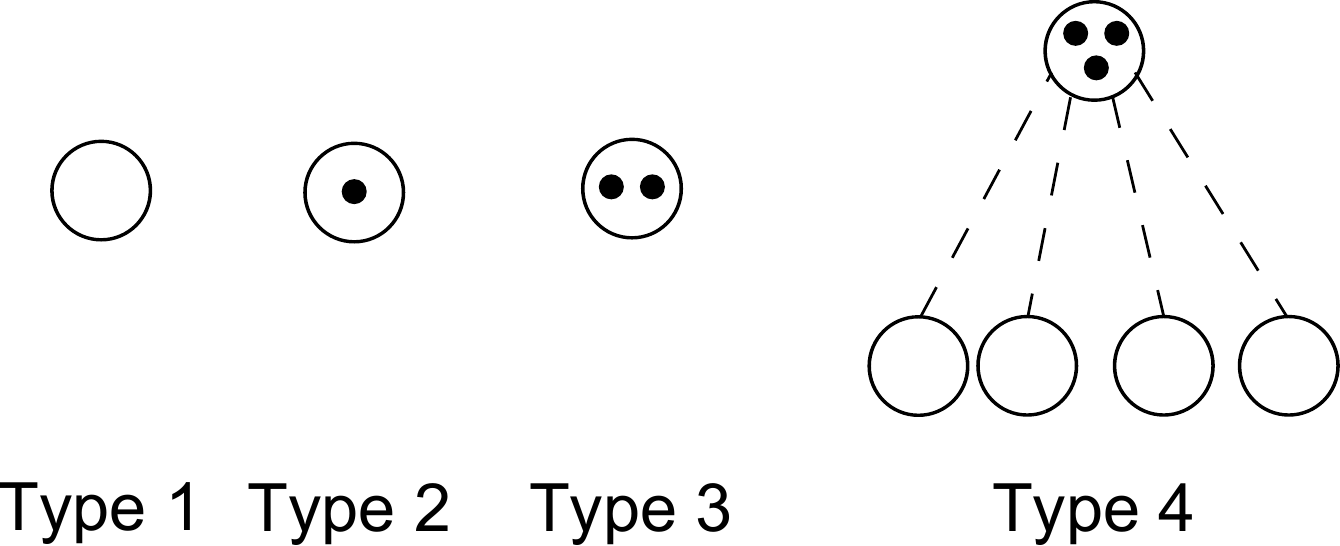}
\caption{The different types used to study the number of leaves in a
  quaternary search tree.}
\label{quadrileaf}
\end{figure}
The intensity matrix is 
\begin{equation}
A=\left(
\begin{array}{rrrr}
 -1 & 0 & 0 & 12 \\
 1 & -2 & 0 & 4 \\
 0 & 2 & -3 & 0 \\
 0 & 0 & 3 & -4 \\
\end{array}
\right). 
\end{equation}
The eigenvalues are,
by direct calculation or by \refT{thm:eigenvalues}, 
\begin{equation}
1, -\tfrac72 + \tfrac{\sqrt{23}}2i, -\tfrac72 - \tfrac{\sqrt{23}}2i, -4.
\end{equation}
Since $ A $ is diagonalisable we may again use Theorem
\ref{thm:normality}\ref{T0c} to calculate $ \Sigma $. 
We obtain 
\begin{align}
  \def\arraystretch{1.5}
\Sigma=\left(
\begin{array}{rrrr}
 \frac{34466}{122525} & \frac{153}{49010} & -\frac{963}{24505} &
   -\frac{10393}{245050} \\
 \frac{153}{49010} & \frac{519}{4901} & -\frac{339}{9802} & -\frac{681}{24505}
   \\
 -\frac{963}{24505} & -\frac{339}{9802} & \frac{276}{4901} & -\frac{57}{3770} \\
 -\frac{10393}{245050} & -\frac{681}{24505} & -\frac{57}{3770} &
   \frac{4391}{122525} \\
\end{array}
\right).
\end{align}
From $ \Sigma $ we see that the asymptotic variance for the number of leaves 
in a random quaternary search tree is 
 \begin{align}\label{simpleleaf4}(0, 1, 1, 1)\Sigma (0, 1, 1, 1)'=\frac{5276}{122525},
 \end{align}
which equals the result in (\ref{leaf4}).

\subsection{Example of Theorem \ref{variance2} when $ k=3 $}\label{ex3}

We consider the case when we want to evaluate $ \sigma_3^{2} $ in Theorem \ref{variance2} in the case of a linear preferential attachment tree. 
As explained in Section \ref{recursive}, only the quotient $ \chi/\rho $
matters, and thus we may assume that $\chi\in\set{-1,0,1}$. 
For (notational) simplicity, we consider only the case $\chi=1$ in the
formulas below; the cases $\chi=0,-1$ (and general $\chi$) are similar but
are left to the reader. The final results will be expressed in the parameter
$\gk=\rho/(\chi+\rho)$ 
in \eqref{kappa}; these results are valid for all values of $\chi$ and
$\rho$. (This follows either by checking the other cases, or by analytic
continuation, since the eigenvector $v_1$ and the integral \eqref{Sigma} are
analytic functions of $(\chi,\rho)$ in a suitable domain.)

We use the construction of the \Polya{} urn in  
\refS{orderpref}, with $\cS'$ the set of (unordered) trees with at most 3
nodes. (Cf.\ \refE{fringektrees} for \mst{s}.)
This
gives an urn with the following 5 different types,
see Figure \ref{preftypes}:
{\addtolength{\leftmargini}{-10pt}
\begin{enumerate}
 \renewcommand{\labelenumi}{\textup{\arabic{enumi}:}}%
 \renewcommand{\theenumi}{\textup{\arabic{enumi}}}%
\item 
 An empty node.  
\item 
A path with two nodes.
\item 
 A path with three nodes. 
\item 
A tree consisting of one root with two children.
\item 
 A special type (with activity 1). 
\end{enumerate}}

\begin{figure}
\includegraphics[scale=0.6]{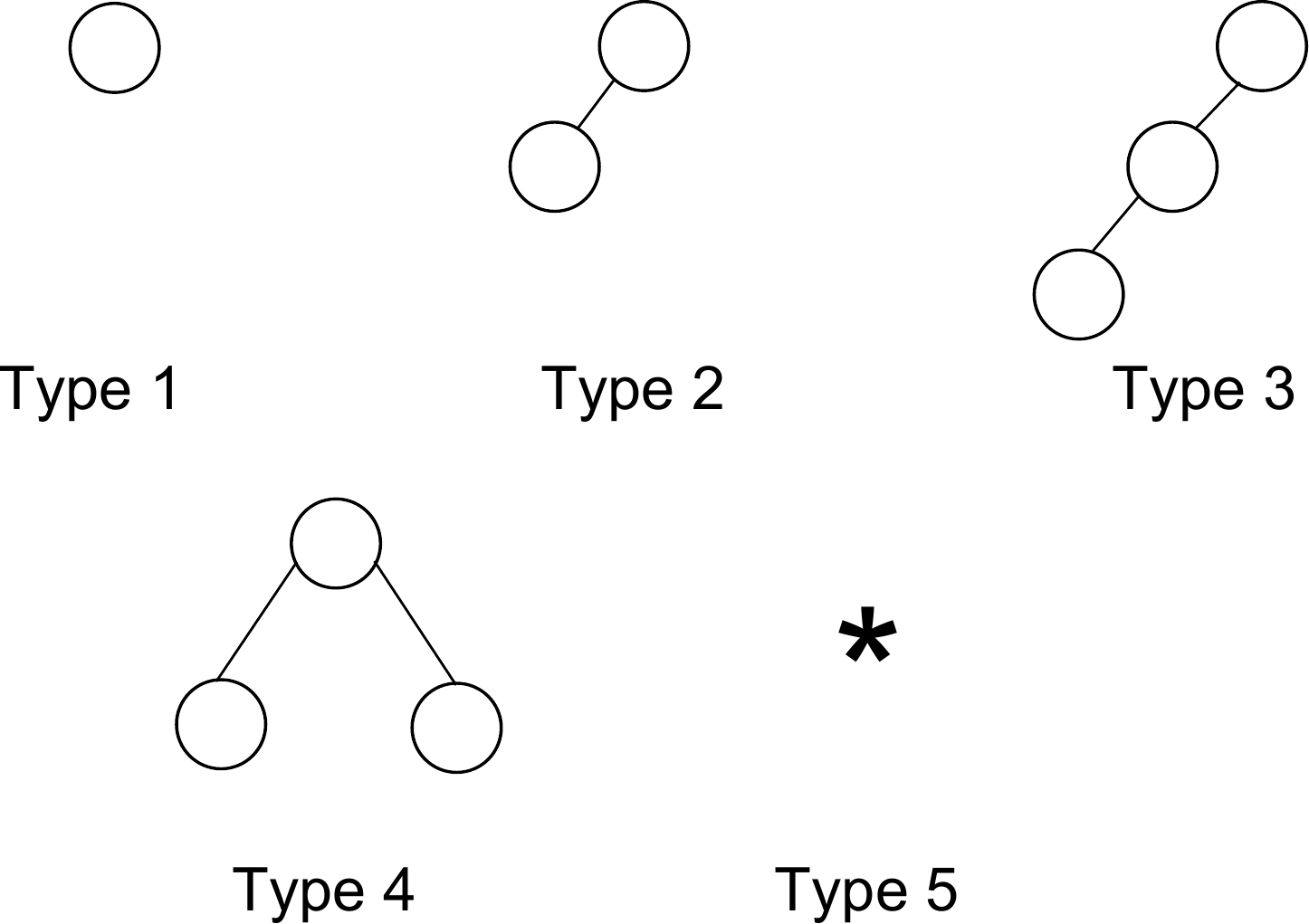}
\caption{The different types used to study the number of fringe subtrees
  with three nodes in a preferential attachment tree.}
\label{preftypes}
\end{figure}

We get the intensity matrix as in the examples  in Section
\ref{ex1} and Section \ref{ex2}. 
We describe one example of a transition, the others are similar. If we draw
a type 3 it is 
replaced by, 
see Figure \ref{preftrans},
\begin{itemize}
\item 
1 of type 1, 1 of type 2, and $ 2+\rho $ of type 5
with probability $ \frac{1+\rho}{2+3\rho} $;
\item 
1 of type 4 and  $ 1+\rho $ of type 5
with probability $ \frac{1+\rho}{2+3\rho} $;
\item 
1 of type 3 and $ 1+\rho $ of type 5
with probability $\frac{\rho}{2+3\rho}  $.
\end{itemize}

\begin{figure}
\includegraphics[scale=0.5]{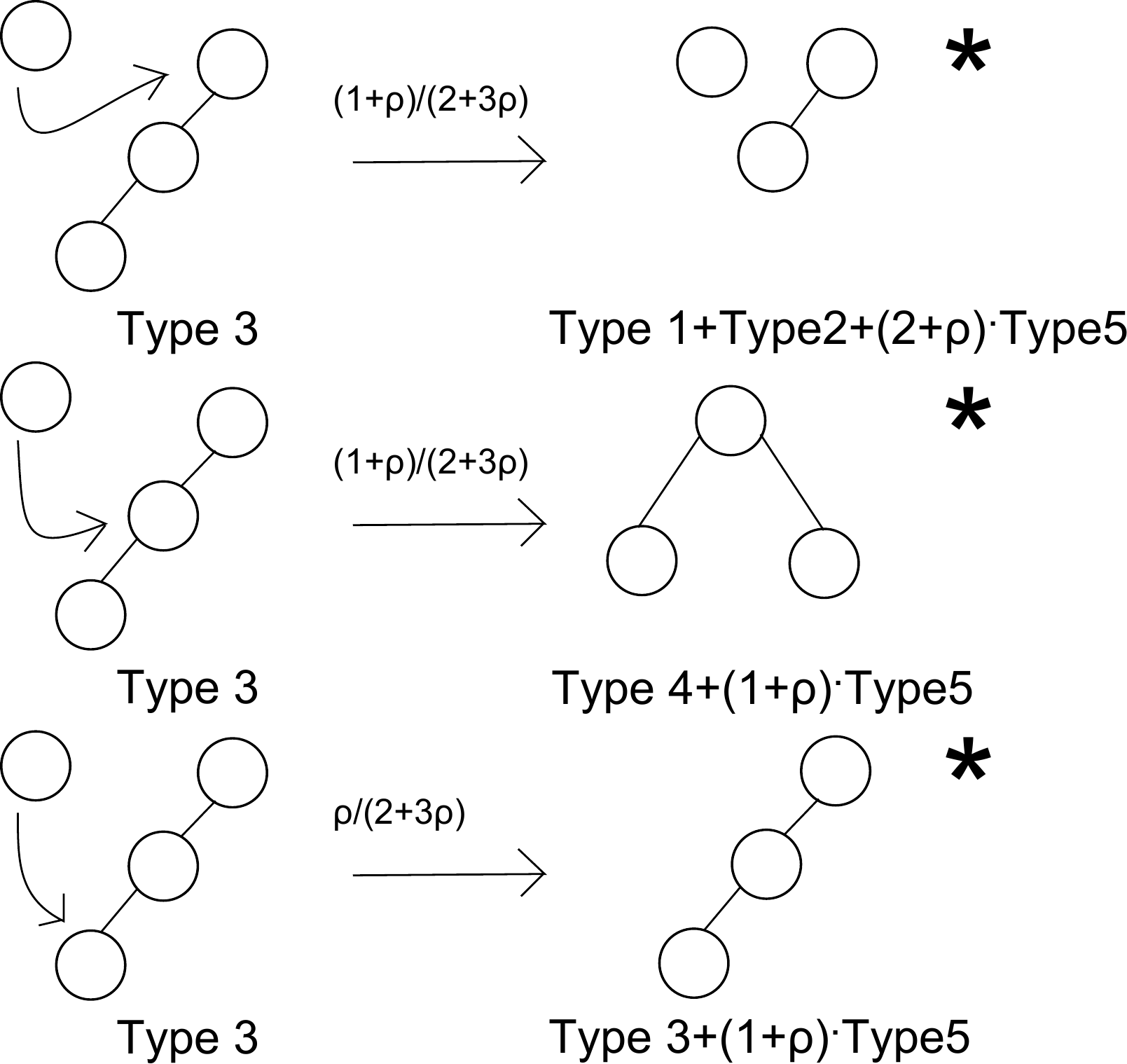}
\caption{The three possibilities for adding an additional node to 
  a tree of type 3 
in Figure \ref{preftypes}.}
\label{preftrans}
\end{figure}

The intensity matrix is 
\begin{align}\label{Apref}
 \def\arraystretch{1.25}
 A=\left(
\begin{array}{ccccc}
 -\rho & 0 & \rho+1 & 5 \rho+6 & 1 \\
 \rho & -2 \rho-1 & \rho+1 & 2 \rho & 0 \\
 0 & \rho & -2 \rho-2 & 0 & 0 \\
 0 & \rho+1 & \rho+1 & -3 \rho-2 & 0 \\
 0 & 0 & 3 (\rho+1)^2 & 3 (\rho+1) (\rho+2) & 1 \\
\end{array}
\right).
\end{align}

The eigenvalues are,
simplest
by \refT{thm:eigenvaluesrecursive}, 
$$\rho+1,-\rho,-2 \rho-1,-3 \rho-2,-3 \rho-2.$$
It again turns out that $A$ is diagonalisable.
To calculate $ \Sigma $ we apply Theorem
\ref{thm:normality}\ref{T0c}. We again have to first calculate 
 $ B_i $ and  $B$  in (\ref{Bi})--(\ref{B}).

Note that the activity vector $ a=(\rho, 1 + 2 \rho, 2 + 3 \rho, 2 + 3 \rho,
1) $ and it again follows that the left eigenvector $ u_1=a $.  
The eigenvector $v_1$ (with the normalisation \eqref{normalised})
is calculated to be
\begin{multline} \label{v1rho}
v_1= \frac{1}{(2\rho + 1)(3\rho + 2)(4\rho + 3)}\Bigpar{6 (\rho+1)^2, 3 \rho (\rho+1), \rho^2, \rho (\rho+1), 6 (\rho+1)^3}.
   \end{multline}
We express this using 
$\gk$ 
in \eqref{kappa}, which yields 
\begin{multline}\label{v1kappa}
v_1 = \frac{1}{(\kappa + 1)(\kappa + 2)(\kappa + 3)}
\Bigpar{
6(1-\gk), 3 (1-\gk) \gk, (1-\gk) \gk^2, (1-\gk) \gk, 6}.
\end{multline}

Recall from the proof of \refT{recursivemulti} (of which Theorem \ref{variance2} is a direct consequence) that $\mu = \gl_1v_1$,
where $\gl_1=\rho+1=1/(1-\gk)$.
The fringe subtrees with three nodes correspond to type 3 and type 4,
so $Y_{n,3}=X_{n,3} + X_{n,4}$.
Hence, from \eqref{tollis2} and \eqref{v1kappa}  
we obtain 
\begin{equation}
\E Y_{n,3} = 
\frac{1}{1-\gk}
\,\frac{(1-\gk) \gk^2+(1-\gk)\gk}{\gkn}
\,n+O(1)
=\frac{\gk}{(\gk+2)(\gk+3)}n+O(1),
\end{equation}
which agrees with
Theorem \ref{variance2} and \eqref{obliquuspref}.

We next calculate
$ B_i=\E(\xi_i\xi_i') $ in (\ref{Bi}); we take
$B_3 $ as an example, where we get,
see Figure \ref{preftrans},
\begin{align}
B_3&=\tfrac{1+\rho}{2+3\rho}\cdot b_1b_1'
+\tfrac{1+\rho}{2+3\rho}\cdot b_2b_2'+\tfrac{\rho}{2+3\rho}\cdot
b_3b_3',
\intertext{with}
 b_1&=(1,1,-1,0,\rho+2)',
   \\  
b_2&=(0,0,-1,1,\rho+1)',
\\
b_3&=(0,0,0,0,\rho+1)'. 
\end{align}
We then find the matrix $B$.
Finally, the eigenvectors $u_i$ and $v_i$ are calculated for all eigenvalues
and the covariance matrix $ \Sigma $ is calculated by \eqref{simpleSigma}.
The covariances are listed in  Appendix \ref{appB}, again using the notion $
\gk $ in  \eqref{kappa}.
As said above, these formulas are valid for all $\chi$ and $\rho$.

Returning to the number of fringe subtrees of order 3 we 
thus obtain
\begin{equation}\label{sigpref}
	\begin{split}
\sigma_3^2 &=(0, 0, 1, 1, 0)\Sigma(0, 0, 1, 1, 0)'
=\gS_{3,3}+2\gS_{3,4}+\gS_{4,4}
\\&
=\frac{3 \gk \left(11 \gk^3+52 \gk^2+77 \gk+30\right)}{2 (\gk+2) (\gk+3)^2 (2 \gk+1) (2 \gk+3) (2
   \gk+5)}.	  
	\end{split}
\end{equation}
We can check that this formula yields previously known results (obtained by
other methods) in the three most important special cases, Examples
\ref{ERRT}--\ref{EBST}. 

For the random recursive tree,  $ \gk=1 $ and 
\eqref{sigpref} yields
$  \sigma_3^2 =\xfrac{17}{336}$, which equals the result
given by
\citet[Theorem 4]{Devroye1},
where $\sigma_k^2 $ was calculated for general $k$ (using different methods),
see also \cite{Fuchs} and \cite[Proposition 1.13 and (1.20)]{HolmgrenJanson}.

For the binary search tree, $\gk=2$ and 
\eqref{sigpref} yields
$\gs_3^2=8/175$, 
which agrees with the result by 
\citet[Theorem 5]{Devroye1} (for general $k$), 
see also
\cite[Proposition 1.10]{HolmgrenJanson}.

For the plane oriented recursive tree, $\gk=1/2$ and
\eqref{sigpref} yields
$\sigma_3^2 = 663/15680 $. 
This variance was calculated, for general $k$, 
by \citet[Theorem 1.1]{Fuchs2012} by other methods (generating functions). 

\begin{rem}\label{RFuchs}
There is a mistake in the
formula for 
the asymptotic variance in \cite[Theorem 1.1]{Fuchs2012}:
the numerator $8k^2-4k-8$ should be $8k^2-4k$.
(The reason is that in the
calculation 
of $\Var(X_{n,k})$ on \cite[p.~419]{Fuchs2012}, 
there should be  a plus sign in front of $\frac{4}{(4k^{2}-1)^{2}}$
instead of a minus sign.)
With this correction, \eqref{sigpref} (with $\gk=1/2$) agrees with
the value for $ k=3 $ of the formula in
\cite[Theorem 1.1]{Fuchs2012}, and the values for $\gss_1$ and $\gss_2$
obtained below agree with the values of the formula for $k=1,2$.
\end{rem}

Note that we can use the \Polya{} urn in this example to calculate
also $\gss_1$ and $\gss_2$, \ie{} the constants in the
asymptotic variances for the numbers $Y_{n,1}$ and $Y_{n,2}$
of fringe subtrees of size $1$ and $2$ in a  
linear preferential attachment tree. 
Note that $Y_{n,1}$ is simply the number of leaves, \ie, the number of nodes
of out-degree 0.
We have $Y_{n,1}=X_{n,1}+X_{n,2}+X_{n,3}+2X_{n,4}$ and 
$Y_{n,2}=X_{n,2}+X_{n,3}$, see Figure \ref{preftypes}.
Thus, again using Appendix \ref{appB},
\begin{align}
\gss_1&
=(1, 1, 1, 2, 0)\Sigma (1, 1, 1, 2, 0)'
=\frac{\gk}{(\gk+1)^2 (2 \gk+1)}, \label{gss1pref}
\\
\gss_2&
=(0, 1, 1, 0, 0)\Sigma (0, 1, 1, 0, 0)'
= \frac{\gk \left(5 \gk^2+10 \gk+6\right)}{(\gk+1) (\gk+2)^2 (2 \gk+1) (2 \gk+3)}.\label{gss2pref}
 \end{align}
However, these values can also be obtained by smaller urns, yielding simpler
calculations, for examples 
by the urn with only types 1, 2 and 5 (the special type) above.
For $k=1$ it suffices to use the urn with two colours and intensity matrix
\eqref{A2pref} used 
in the proof of \refT{thm:eigenvaluesrecursive}, see
\refE{Eleaves}.

We can again check that the results agrees 
with known results when $\gk=1,\frac12,2$.
For the random recursive tree ($\gk=1$),
\eqref{gss1pref}--\eqref{gss2pref} yield
$\gss_1=1/12$ and $\gss_2=7/90$, as shown in 
\cite[Theorem 4]{Devroye1}, 
see also \cite[Proposition 1.13]{HolmgrenJanson};
for the plane oriented recursive tree ($\gk=1/2$) we obtain
$\gss_1=1/9$ and $\gss_2=49/600$, as shown in 
\cite{MahmoudSS} and
\cite[Theorem 1.1]{Fuchs2012} (see \refR{RFuchs});
for the binary search tree ($\gk=2$) we obtain
$\gss_1=2/45$ and $\gss_2=23/420$, as shown in 
\cite[Theorem 5]{Devroye1}, 
see also \cite{Fuchs} and 
\cite[Proposition 1.10]{HolmgrenJanson}.

\section{Degrees}\label{degree}

By using (simpler) variants of the \Polya{} urns described above for
studying fringe subtrees in $m$-ary search trees and  preferential
attachment trees, we can also easily prove normal limit theorems for the
out-degrees of the nodes  in both of these models.  

\subsection{Out-degrees in $ m $-ary search trees}
We first consider \mst{s}. 
The following theorem was recently proved by \citet{KalMahmoud:Degree} using
a  \Polya{} urn (based on gaps instead of nodes) that is equivalent to the
one used here.
(A simpler version, $A_m$ in the proof below, 
was used in \cite{HolmgrenJanson2} to study the special
case $k=0$, \ie, the number of leaves.)
We nevertheless sketch a proof 
in order to show the connections with the analysis in the previous sections,
and in particular the induction argument for the eigenvalues. 
(In \cite{KalMahmoud:Degree}, the eigenvalues were calculated numerically.)

\begin{thm}\label{degreemary} 
Let $D_{n,k}$ be the number of nodes with out-degree $k$ in the random $ m $-ary
search tree $ \cT_n $. 
If $m\leq 26$, then,  as $n\to \infty$,  
 \begin{align}\label{degm}
\dfrac{D_{n,k}-\mu_k n}{\sqrt{n}}
&\dto \mathcal{N}(0,\sigma^{2}_k), 
 \end{align}
with
\begin{equation}\label{mstD}
\mu_k=
\begin{cases}
\frac{m-1}{2(H_m-1)(m+1)}, & k=0,
\\
\frac{1}{(H_m-1)m(m+1)}, & 1\le k\le m,
\end{cases}
\end{equation}
 where 
$\sigma^{2}_k$ is some positive constant.
 \end{thm}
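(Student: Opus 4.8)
The plan is to rerun the fringe-subtree machinery of Sections~\ref{polyafringe}--\ref{proofs} with a small \Polya{} urn adapted to out-degrees. First I would set up the urn: decompose $\cT_n$ as in \refS{polyafringe}, but bundling each full node (one holding $m-1$ keys) together with its currently empty child-slots and recording its out-degree, so the balls are of two kinds — a single non-full node with $j$ keys, $0\le j\le m-2$, of type $N_j$ and activity $j+1$; and a full node with out-degree $k$ together with its $m-k$ external children, of type $F_k$ and activity $m-k$. The replacement rules are deterministic: drawing $N_j$ with $j\le m-3$ gives $N_{j+1}$; drawing $N_{m-2}$ gives $F_0$; drawing $F_k$ with $0\le k\le m-1$ gives $F_{k+1}$ together with one fresh node $N_1$ (the child just filled); and $F_m$ has activity $0$ and is inert. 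As in \refS{polyafringe} the total activity grows by exactly $1$ per step, so $a\cdot\xi_i=1$ for all $i$, whence $\gl_1=1$ and $u_1=a$ by \refR{Racdot}. Conditions (A1)--(A7) hold as in \refS{generalpolya} (the inert type $F_m$, like a dead ball, may be kept, contributing a harmless eigenvalue $0$; the type $N_0$ of a standalone external node occurs only at $n=0$ in the full urn but is the bottom type in the reductions used below). Since $D_{n,k}=X_{n,F_k}$ for $1\le k\le m$ and $D_{n,0}=\sum_{j=0}^{m-2}X_{n,N_j}+X_{n,F_0}$ (the leaves), it suffices to prove asymptotic normality of $\cX_n$.

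Next I would compute the eigenvalues of the intensity matrix $A$ by the induction in the proof of \refT{thm:eigenvalues}. The partial order $\preceq$ on the types is the chain $N_0\prec\dots\prec N_{m-2}\prec F_0\prec\dots\prec F_m$; number the types compatibly with it. Reducing the urn down to its first $m-1$ types — i.e.\ treating every full node as dead, so that its $m$ external children become $m$ balls $N_0$ — recovers the classical node-counting urn with matrix $A_{m-1}$ of \eqref{AWmatrix} and characteristic polynomial $\phi_m$. For $m\ge3$ the induction applies verbatim: an analogue of Proposition~\ref{prop:gaps}(i) holds (a drawn ball never returns a ball of the same type, so every reduced matrix $A_k$ has all diagonal entries $-a_i$), and the intertwining $TA_{k+1}=A_kT$ together with the trace identity force the extra eigenvalue of $A_{k+1}$ to be $-a_{k+1}$. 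Hence the eigenvalues of $A$ are the roots of $\phi_m$ together with $\{-1,\dots,-(m-1)\}$, $\{-(m-k):0\le k\le m-1\}$ and $0$. (For $m=2$ a drawn $F_0$ may produce a new $F_0$, giving one exceptional diagonal entry as in Proposition~\ref{prop:gaps}(ii); the eigenvalue list is unchanged, and this case is the classical binary-search-tree node-type urn.) Since $\gl_1=1$, the hypothesis $\Re\gl<\gl_1/2$ of \refT{thm:normality} holds for every other eigenvalue precisely when $\gam_m<\tfrac12$, i.e.\ when $m\le26$, by \cite{MahmoudPittel,FillKapur}.

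With the spectral gap in hand, \refT{thm:normality}\ref{T0a} gives $n\qqw(\cX_n-nv_1)\dto\N(0,\gS)$ with $\mu=\gl_1v_1=v_1$, and \cite{JansonMean} gives $\E\cX_n=nv_1+O(n^{\gam_m})=nv_1+o(n\qq)$, so the centering may be replaced by $\mu_kn$; applying the two functionals above then yields \eqref{degm} with $\mu_k$ the relevant coordinate(s) of $v_1$. Solving $Av_1=v_1$ with $a\cdot v_1=1$ is immediate from the chain structure: the $F$-rows give $v_{1,F_0}=\dots=v_{1,F_m}=:\phi$; the $N_0$-row gives $v_{1,N_0}=0$; the $N_1$-row gives $3v_{1,N_1}=\phi\sum_{k=0}^{m-1}(m-k)=\phi\,m(m+1)/2$; and the remaining $N$-rows give $(j+2)v_{1,N_j}=j\,v_{1,N_{j-1}}$, hence $v_{1,N_j}=\phi\,m(m+1)/\bigpar{(j+1)(j+2)}$ for $1\le j\le m-2$ (for $m=2$ this is vacuous and the computation is simpler). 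Then $a\cdot v_1=\phi\,m(m+1)\bigpar{\sum_{\ell=3}^{m}\tfrac1\ell+\tfrac12}=\phi\,m(m+1)(H_m-1)=1$, so $\phi=\tfrac1{(H_m-1)m(m+1)}$, which gives $\mu_k=\phi$ for $1\le k\le m$ and $\mu_0=\sum_{j=0}^{m-2}v_{1,N_j}+\phi=\phi\,m(m-1)/2$, matching \eqref{mstD}. Finally $\sigma_k^2>0$ because, by \cite[Theorem 3.6]{JansonMean}, it suffices to exhibit one $n$ with $\Var(D_{n,k})>0$, which already holds for small $n$ (the possible shapes of $\cT_{2m-1}$ realize distinct values of each $D_{n,k}$, and $\cT_3$ does so for $D_{n,0}$ even when $m=2$).

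The whole argument is parallel to the fringe case, so I do not expect a deep difficulty; the points needing care are the urn's bookkeeping — bundling each full node with its empty slots so that its out-degree changes under a local move, and checking that the reductions really collapse this urn onto $A_{m-1}$ — together with the harmless $m=2$ exceptional diagonal. The explicit evaluation of $v_1$, though needed to pin down \eqref{mstD}, is short thanks to the chain structure of the urn.
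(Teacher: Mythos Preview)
Your proof is correct and follows essentially the same approach as the paper's: the same urn (the paper makes the minor simplification of identifying your $F_{m-1}$ with the external-node type and dropping the inert $F_m$, ending with $2m-2$ types rather than $2m$), the same eigenvalue induction from $A_{m-1}$, and the same appeals to \refT{thm:normality} and \cite[Theorem~3.6]{JansonMean}; for $\mu_k$ the paper cites \cite{KalMahmoud:Degree} or \cite{HJbranching}, whereas you compute $v_1$ directly. One writing slip: your eigenvalue list has a spurious $\{-1,\dots,-(m-1)\}$ --- the induction adds exactly one eigenvalue $-a_{F_k}$ per new $F_k$, so the full spectrum is the roots of $\phi_m$ together with $\{-m,\dots,-1,0\}$ --- but since all extras are $\le0$ your spectral-gap conclusion is unaffected.
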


\begin{proof}
We construct a \Polya{} urn by chopping up the \mst{} as in
\refS{polyafringe},
but we now
erase all edges from parents to internal children, keeping only the edges to
external children. Hence, the small trees in the resulting forest, which are
represented by balls in the urn, are of the following $2m-1$ types.
(We regard the trees as unordered.)
\begin{itemize}
\item 
A single internal node with $1,\dots,m-2$ keys.
\item 
A root with $m-1$ keys and $0,\dots,m$ external children.
\end{itemize}
We can simplify a little by noting that the type consisting of a root with 0
external children is dead (activity 0) so it does not affect the evolution
of the urn and can be ignored (although it should be included in the final
count of node degrees; it represents the nodes of degree $m$).
Moreover, the type with 1 external child can instead be represented by a single
external node (although it should be counted as a node of degree $m-1$).
This yields a \Polya{} urn with the following $2m-2$ types.
(See Figure \ref{figdegree}, which shows the different types in the case
$ m=4 $.) 
\begin{figure}
\includegraphics[scale=0.6]{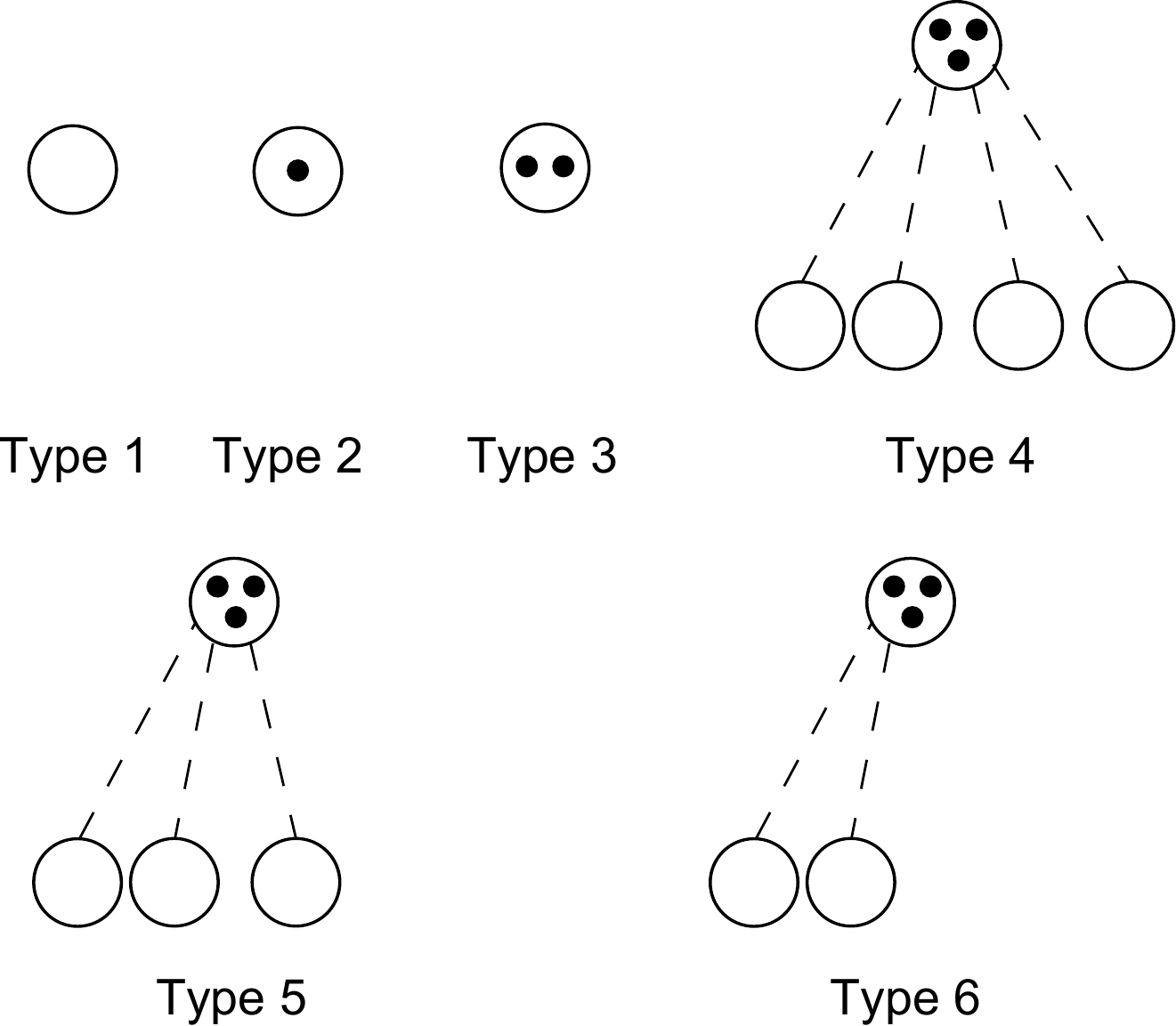}
\caption{The different types used to characterize the different out-degrees
  in a quaternary search tree.}
\label{figdegree}
\end{figure}
\begin{description}
\item [\rm$1,\dots,m-1$]
Type $i$ is a single node with $i-1$ keys. There are $i$ gaps and thus the
activity $a_i = i$.
\item [\rm$m,\dots,2m-2$]
Type $i$ is a node with $m-1$ keys and $2m-i$ external children. The
activity $a_i = 2m-i$. (The out-degree is $i-m$.)
\end{description}
If we draw a ball of type $i\le m-2$, it is replaced by a ball of type $i+1$.
Similarly, a ball of type $m-1$ is replaced by a ball of type $m$.
A ball of type $i\in\set{m,\dots,2m-3}$  
is replaced by a ball of type $i+1$ and a ball of
type 2.
A ball of type $2m-2$ is replaced by a ball of type 1 and a ball of type 2.

For our induction argument, we also consider a reduced urn with types
$1,\dots,k$, for $m - 1\le k\le 2m-2$, obtained by chopping up also all trees of
types $j>k$. In other words, we replace a ball of type $j>k$ by $2m-j$ balls
of type 1. (Just as we already have cut up trees with a single external
child.) This reduced urn thus ignores all nodes 
with $m-1$ keys and degree $>k-m$. 

Let $A_k$ be the intensity matrix
of this urn with $k$ types. When a ball is drawn, there will never be a ball
of the same type in the set of balls replacing it.
Hence every $\xi_{ii}=-1$ and
the diagonal
elements of $A_k$ are $-a_i$, where $a_i$ is the activity of type $i$.
For $k=m-1$, the reduced urn is the same as the urn with the $m-1$ first
types in \refS{proofs}, so $A_{m-1}$ is given by \eqref{AWmatrix} and its
eigenvalues are the roots of 
$\phi_m(\gl):= \prod_{i = 1}^{m-1} (\lambda + i) - m!$.
It now follows by the same
induction argument as in \refS{proofs} that the eigenvalues of $A_k$ are
the roots of $\phi_m$ plus \set{-m,-(m-1),\dots,-(2m-k)}. In particular, taking
$k=2m-2$, the eigenvalues of the intensity matrix $A=A_{2m-2}$ are the
eigenvalues of $\phi_m$ plus the negative numbers $-2,\dots,-m$.

Hence,  for every eigenvalue $\gl\neq\gl_1=1$,
$\Re\gl\le\max\xpar{\gam_m,-2}\le\gammm$, \cf{} \eqref{gamm},
and thus \refT{thm:normality} applies when $m\le26$.
The rest of the proof is as in the proof of \refT{multivariate} and
other proofs above. 
The constants \eqref{mstD} can be found either by finding the eigenvector
$v_1$ of the intensity matrix $A$ explicitly, as in \cite{KalMahmoud:Degree},
or by 
comparison with
\cite[Theorems 7.11 and 7.14]{HJbranching}
(proved using branching processes). 

Finally, $\gss_k>0$ by another application of \cite[Theorem 3.6]{JansonMean}
and the fact that $D_{n,k}$ is not deterministic for all $n$, as is easily
seen.
\end{proof}

\subsection{Out-degrees in preferential attachment trees}

\citet{MahmoudSmythe} used a \Polya{} urn to show
asymptotic normality for the numbers of nodes of out-degrees 0, 1 and 2
in a random recursive tree, and
\citet{MahmoudSS} did the same for a plane oriented recursive tree;
these results were extended to arbitrary degrees 
by \citet{SJ155}.
We can extend this to general linear preferential attachment trees (using
essentially the same urn).

In the case $\chi<0$, when we can assume $\chi=-1$ and $\rho=m$ for some
integer $m$, the resulting tree has no nodes of out-degree $>m$, and we consider
only out-degrees $k\le m$ in the following theorem; otherwise $k$ is
arbitrary. 
For the asymptotic proportions $\mu_k$ in \eqref{mstDpref}, see also 
\cite[(6.33)]{HJbranching}.

\begin{thm}\label{degreepref} 
Let $ \hD_{n,k} $ be number of nodes with out-degree $k$ in the in the linear
preferential attachment tree $ \Lambda_n $ defined by the weights
\eqref{wlinear}. 
Then,  as $n\to \infty$, 
 \begin{align}\label{main2bpref}
\dfrac{\hD_{n,k}-\mu_{k} n}{\sqrt{n}}
&\dto \mathcal{N}(0,\sigma^{2}_k), 
 \end{align}
with some $\gss_k>0$ and
\begin{equation}\label{mstDpref}
\mu_{k}
=\frac{w_1}{w_0+w_1}
\prod_{i=1}^{k}\frac{w_{i-1}}{w_i+w_1}
=\frac{\chi+\rho}{\chi +2\rho}
\prod_{i=1}^{k}\frac{\chi (i-1)+\rho}{\chi(i+1)+2\rho}.
\end{equation}
 \end{thm}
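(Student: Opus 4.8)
The plan is to transcribe the fringe-subtree argument of \refS{orderpref}--\refS{proofspref}, replacing the decomposition of $\gL_n$ into maximal small fringe trees by the decomposition into \emph{single nodes labelled by their out-degree}. Fix a target degree $K$ (we take $K=k$). Erasing every edge of $\gL_n$, each ball becomes one node: a node of out-degree $j$ with $0\le j\le K$ is a \emph{normal} type $\bullet_j$ of activity $w_j$, while --- exactly as with the special types $*_j$ of \refS{infpref} --- a node of out-degree $j>K$ has weight $w_j=\chi j+\rho$ and is represented by $w_j$ units of mass of a single \emph{special} type $*$ of activity $1$; by linearity of the weights this is a well-defined \Polya{} urn (real-valued when $\rho\notin\bbZ$, as in \refR{Rreal}), with replacement rules: drawing $\bullet_j$ ($0\le j<K$) returns $\bullet_{j+1}$ and a fresh $\bullet_0$; drawing $\bullet_K$ returns $w_{K+1}$ mass of $*$ and a fresh $\bullet_0$; and drawing $*$ returns $\chi$ extra mass of $*$ and a fresh $\bullet_0$. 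The point to check is that the $*$-transition depends only on the type drawn, not on which high-degree node actually received the child (its weight changes by $w_{j+1}-w_j=\chi$ regardless); moreover $a\cdot\xi_i=\chi+\rho$ deterministically, so $\gl_1=\chi+\rho$ and $u_1=a$ by \refR{Racdot}. When $\chi=-1$ (hence $\rho=m\in\bbZ_+$, $w_m=0$) no node ever has out-degree $>m$, so only $k\le m$ is meaningful; for $k=m$ one counts the complete nodes via $\hD_{n,m}=n-\sum_{j<m}\hD_{n,j}$, and for $k<m$ the construction above applies with $K=k$ (the vacuous $*$ type being simply dropped when $k=m-1$).

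Next I would check that \refT{thm:normality} applies. Conditions (A1$'$) and (A2)--(A7) hold for exactly the reasons recorded after \refR{Rreal}: $d_i=1$ for the normal types, $\xi_{**}=\chi\ge-1$, every positive-activity type is dominating (from $\bullet_0$ one reaches every type), and there is always a ball of positive activity. The remaining hypothesis, $\Re\gl<\gl_1/2$ for $\gl\ne\gl_1$, I would obtain by the eigenvalue induction of \refT{thm:eigenvaluesrecursive}. Order the types $\bullet_0,\dots,\bullet_K,*$ and let $A_j$ be the intensity matrix of the reduced urn on $\set{\bullet_0,\dots,\bullet_{j-2},*}$ (collapsing out-degree $\ge j-1$ into $*$); the base case $A_2$ on $\set{\bullet_0,*}$ is again the matrix \eqref{A2pref}, with eigenvalues $\chi+\rho$ and $-\rho=-w_0$. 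The linear map sending the counts for the $(j+1)$-type urn to those for the $j$-type urn (each $\bullet_{j-1}$ ball becoming $w_{j-1}$ mass of $*$) preserves total activity, so $A_{j+1}$ inherits the eigenvalues of $A_j$; and, by the same dead-root/maximal-path analysis as in Proposition~\ref{prop:pref} (with $\bullet_0$ playing the role of the maximal path, diagonal $\rho-w_0=0$), the diagonal of $A_j$ is $(0,-w_1,\dots,-w_{j-2},\chi)$, so comparing traces pins the new eigenvalue at $-w_{j-1}$. Hence the eigenvalues of $A=A_{K+2}$ are $\chi+\rho$ together with $-(\chi i+\rho)$ for $0\le i\le K$; since $\chi+\rho>0$, each of the latter has real part $\le0<(\chi+\rho)/2$, so \refT{thm:normality} applies for every linear preferential attachment tree, with no restriction on $m$.

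It remains to conclude as in the proof of \refT{recursivemulti}: \refT{thm:normality}\ref{T0a} gives $n^{-1/2}(\cX_n-n\mu)\dto\N(0,\Sigma)$ with $\mu=\gl_1v_1$, and (taking $K=k$) $\hD_{n,k}$ is the single coordinate $X_{n,\bullet_k}$ of $\cX_n$, so $n^{-1/2}(\hD_{n,k}-\mu_kn)\dto\N(0,\gss_k)$ with $\mu_k=\gl_1v_{1,\bullet_k}$. The $\bullet_j$-row of $Av_1=(\chi+\rho)v_1$ reads $w_{j-1}v_{1,\bullet_{j-1}}=(w_j+\chi+\rho)v_{1,\bullet_j}$, which together with $\gl_1v_{1,\bullet_0}=w_1/(w_0+w_1)$ (the leaf proportion, read off from the $K=0$ urn $A_2$) telescopes to \eqref{mstDpref}; alternatively one matches with \cite[Theorems 7.11 and 7.14 and (6.33)]{HJbranching}, as for \refT{degreemary}. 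Finally $\gss_k>0$ by \cite[Theorem 3.6]{JansonMean}, since $\hD_{n,k}$ is plainly non-deterministic for small $n$. I expect the only genuine obstacle to be the special-type bookkeeping --- verifying carefully that the $*$-transition depends only on the type drawn and handling the $\chi<0$ truncation --- the eigenvalue induction and the concluding steps being direct transcriptions of arguments already in the paper.
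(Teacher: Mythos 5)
Your proposal matches the paper's proof: the same node-by-out-degree decomposition, the same collapsing of high-degree nodes into a single special type $*$ of activity $1$ (using the linearity of $w_k$ and the real-valued urn extension of \refR{Rreal}), the same reduction to the matrix $A_2$ of \eqref{A2pref} as base case, the same trace/inheritance induction to read off the spectrum $\{\chi+\rho,-w_0,\dots,-w_K\}$, and the same conclusion via \refT{thm:normality}. Your telescoping of the $\bullet_j$-rows of $Av_1=\gl_1v_1$ to obtain \eqref{mstDpref} is simply an explicit version of the paper's remark that one can verify $\mu_k$ by checking it is an eigenvector of the (infinite) intensity matrix, and your treatment of the $\chi=-1$ boundary cases fills in the "minor modifications left to the reader."
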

\begin{proof}
As in \refS{prefpolya}, we begin by constructing an urn with infinitely many
types. The types are \set{0,1,2,\dots}, and a ball of type $i$ simply
represents a node of out-degree $i$.
The activity of  type $i$ is thus $a_{i}= w_i=\chi i+\rho $, see
\eqref{wlinear}. When a ball of type $i$ is drawn, it is replaced by a ball
of type $i+1$ and a ball of type $0$.
(In the case $\chi<0$, we consider only a finite number of types
so we have a finite urn;
we leave the minor modifications in this case to the reader.)

To get an urn with finitely many types,
we truncate as in \refS{orderpref} (and \cite{SJ155});
we choose an integer $k\geq 0$ and use the $k+2$
types $\{0,1,\dots,k\}\cup\set{*}$, 
where the new type $*$ represents the activity of the nodes with
out-degrees $> k$. Hence, $*$ has activity 1. 
If we draw a ball of type $i<k$, we replace it by a ball of type $i+1$ and 
a ball of type 0, as before;
if we draw a ball of type $k$, we replace it by a ball of type 0
and $w_{k+1}=(k+1)\chi+\rho$ balls of type $*$;
if we draw a ball of type $*$, it is replaced 
and we add $\chi$ additional balls of type $*$ and a ball of type 0.

Let $A_{k+2}$ denote the $(k+2)\times(k+2)$ intensity matrix of this urn.
For $k=0$ we have the same urn as in the proof of
\refT{thm:eigenvaluesrecursive}, so $A_2$ is given by \eqref{A2pref} with
the eigenvalues $\chi+\rho$ and $-\rho$.
As in the proof of \refT{thm:eigenvalues}, 
the eigenvalues of $A_k$ are inherited by $A_{k+1}$, and a simple induction
shows that the eigenvalues of $A_{k+2}$ are
$\chi+\rho$ and $-w_j=-(\chi j+\rho)$ for $0\leq j\leq k$.  
In particular, all eigenvalues except $\gl_1=\chi+\rho$ are negative.
Hence \refT{thm:normality} applies and the proof is completed as the other
proofs. The constants $\mu_k$ can be found by verifying directly that
\eqref{mstDpref} yields an eigenvector of the intensity matrix $A$ for the
infinite urn, and thus it is mapped to an eigenvector for $A_{k+2}$ by the
truncation above. Alternatively, we can use \cite[(6.14)]{HJbranching}. 
\end{proof}

\begin{rem}
We can modify the urn in the proof of \refT{degreepref}
to an equivalent one
by changing each ball of type $i\le k$ to $w_i$ new
balls of type $i$; the new balls all have activity 1 and can be interpreted as
gaps. (This is the urn actually used in \cite{SJ155}.)  
The new intensity matrix $A_{k+2}$ has the same eigenvalues as the old one, but it
has the advantage that it is homogeneous of degree 1 in $\rho$ and $\chi$,
\ie, each entry is a linear combination of $\chi$ and $\rho$.
\end{rem}

\begin{rem}
  In both cases, we also obtain asymptotic joint normal distributions of the
  numbers $D_{n,k}$ and $\hD_{n,k}$ for different $k$.
\end{rem}

\begin{example}\label{Eleaves}
  The simplest case is $k=0$, when $\hD_{n,0}$ is the number of leaves in
  $\gL_n$.
In this case, \eqref{mstDpref} yields
\begin{equation}
  \mu_0
=\frac{w_1}{w_0+w_1}
=\frac{\chi+\rho}{\chi +2\rho}
=\frac{1}{\gk+1}.
\end{equation}
Moreover, the proof above yields the urn with two colours 
and intensity matrix $A_2$  given by \eqref{A2pref}.
The eigenvalues of $A_2$ are 
by direct calculation or by \refT{thm:eigenvaluesrecursive}, 
$\gl_1=\chi+\rho$ and $-\rho$.
Simple calculations show that
\begin{align}\label{Bprefleaf}&
 B=\left(
\begin{array}{cc}
 \frac{\rho+\chi}{2 \rho+\chi} & \frac{\chi (\rho+\chi)}{2 \rho+\chi} \\[10pt]
 \frac{\chi (\rho+\chi)}{2 \rho+\chi} & \frac{(\rho+\chi)(\rho^2+ \chi \rho+ \chi^2)}{2 \rho+\chi} \\
\end{array}
\right)
\end{align}
and the covariance matrix $ \Sigma $ is,
 using, for example,
\refT{thm:normality}\ref{T0c} again,
\begin{align}\label{covprefleaf}&
\Sigma=\left(
\begin{array}{cc}
 \frac{\gk}{(\gk+1)^2 (2 \gk+1)} & \frac{\gk^2}{(\gk-1) (\gk+1)^2 (2 \gk+1)} 
\\[10pt]
 \frac{\gk^2}{(\gk-1) (\gk+1)^2 (2 \gk+1)} & \frac{\gk^3}{(2 \gk+1)
   \left(\gk^2-1\right)^2}
\end{array}
\right).
\end{align}
Thus, 
the asymptotic variance of the number of leaves is
\begin{align}\label{simpleleafpref}
\gss_1=(1, 0)\Sigma (1, 0)'=\frac{\gk}{(\gk+1)^2 (2 \gk+1)},
 \end{align}
which equals the result in (\ref{gss1pref}).
(This was shown
for the random recursive tree
and the binary search tree
in \cite{Devroye1},
and for the plane oriented recursive tree 
in
\cite{MahmoudSS}.)
\end{example}

\begin{example}\label{Edegrees}
  We also consider the case when $ k=1 $. In this case the proof above yields the urn with three colours and intensity matrix $A_3$  given by 
\begin{align}\label{degpref}A_3=\left(
  \begin{array}{ccc}
 0 & \rho+\chi & 1 \\
 \rho & -\rho-\chi & 0 \\
 0 & (\rho+\chi) (\rho+2 \chi) & \chi \\
\end{array}
\right) .
\end{align}
The eigenvalues of $A_3$ are
$\gl_1=\chi+\rho$, $-\rho$ and $-(\chi+\rho)$.
Simple calculations show that
\begin{align}\label{Bdeg}
B=\left(
\begin{array}{ccc}
 \frac{\rho+\chi}{2 \rho+\chi} & -\frac{\rho}{2 (2 \rho+\chi)} & \frac{(\rho+\chi) (\rho+2 \chi)}{2 (2 \rho+\chi)} \\[10pt]
 -\frac{\rho}{2 (2 \rho+\chi)} & \frac{3 \rho}{2 (2 \rho+\chi)} & -\frac{\rho (\rho+2 \chi)}{2 (2 \rho+\chi)} \\[10pt]
 \frac{(\rho+\chi) (\rho+2 \chi)}{2 (2 \rho+\chi)} & -\frac{\rho (\rho+2 \chi)}{2 (2 \rho+\chi)} & \frac{(\rho+\chi)^2
   (\rho+2 \chi)}{2 (2 \rho+\chi)} \\
\end{array}
\right)
\end{align}
and, 
for example using
\refT{thm:normality}\ref{T0c},
\begin{align}\label{covprefdeg}&
\hskip-2mm
\Sigma=\left(
\begin{array}{ccc}
 \frac{\kappa}{(\kappa+1)^2 (2 \kappa+1)} 
& -\frac{\kappa \left(2 \kappa^2+3 \kappa+2\right)}
  {2 (\kappa+1)^2 (\kappa+2)   (2 \kappa+1)} 
& \frac{ \kappa(2-\kappa)}{2 (\kappa+1)^2 (\kappa+2) (2 \kappa+1)} 
\\[10pt]
-\frac{\kappa \left(2 \kappa^2+3 \kappa+2\right)}
{2 (\kappa+1)^2 (\kappa+2) (2 \kappa+1)} 
& \frac{\kappa \left(2  \kappa^3+25 \kappa^2+32 \kappa+12\right)}
{12 (\kappa+1)^2 (\kappa+2) (2 \kappa+1)} 
& -\frac{\kappa 
 \left(2- \kappa \right)  \left( 10\kappa^{2}+13\kappa+6 \right)} 
{12 (\kappa+1)^2 (\kappa+2) (2 \kappa+1)} 
\\[10pt]
 \frac{ \kappa(2-\kappa)}{2 (\kappa+1)^2 (\kappa+2) (2 \kappa+1)} 
& -\frac{\kappa 
 \left(2- \kappa \right)  \left( 10\kappa^{2}+13\kappa+6 \right)} 
{12 (\kappa+1)^2 (\kappa+2) (2 \kappa+1)} 
& \frac{\kappa 
 \left( 2-\kappa \right)  \left( 10\kappa^{2}+7\kappa+6 \right)}
{12 (\kappa+1)^2 (\kappa+2) (2 \kappa+1)}
\end{array}
\right).
\end{align}
In the upper left corner, we find again $\gss_1$ 
in (\ref{gss1pref}) and (\ref{simpleleafpref}).

The upper left $2\times2$ submatrix of $\gS$, giving the asymptotic
variances and covariance of the numbers of nodes of out-degrees 1 and 2,
was  found
in \cite{Devroye1} for the binary search tree ($\gk=2$),
in \cite{MahmoudSmythe} for the random recursive tree ($\gk=1$)
and in \cite{MahmoudSS} for the plane oriented recursive tree ($\gk=1/2$),
see also \cite{SJ155}.
\end{example}

%

 \newpage

\appendix
\section{The covariance matrix $\Sigma$ in Section \ref{ex2}}\label{appA}
 
\rotatebox{270}{$\Sigma=$\\
${\left(
\arraycolsep=0 pt\def\arraystretch{3.6}
\begin{array}{ccccccccc}
 \frac{3400704921}{13887473600} & \frac{54821229}{3738935200} &
   -\frac{55644023}{2991148160} & -\frac{5396373387}{194424630400} &
   -\frac{47473653}{6943736800} & -\frac{47473653}{6943736800} &
   -\frac{19950493}{7776985216} & -\frac{19950493}{7776985216} &
   -\frac{228203991}{194424630400} \\
 \frac{54821229}{3738935200} & \frac{7300603}{66040975} &
   -\frac{8044611}{325124800} & -\frac{2117030913}{97212315200} &
   -\frac{1469561}{301901600} & -\frac{1469561}{301901600} &
   -\frac{7204037}{4226622400} & -\frac{7204037}{4226622400} &
   -\frac{1334771}{1767496640} \\
 -\frac{55644023}{2991148160} & -\frac{8044611}{325124800} &
   \frac{118631347}{2113311200} & -\frac{2729459079}{194424630400} &
   -\frac{3532337}{1207606400} & -\frac{3532337}{1207606400} &
   -\frac{161089}{162562400} & -\frac{161089}{162562400} &
   -\frac{83883901}{194424630400} \\
 -\frac{5396373387}{194424630400} & -\frac{2117030913}{97212315200} &
   -\frac{2729459079}{194424630400} & \frac{617325}{17359342} &
   -\frac{7661559}{3967849600} & -\frac{7661559}{3967849600} &
   -\frac{123349341}{194424630400} & -\frac{123349341}{194424630400} &
   -\frac{3764589}{13887473600} \\
 -\frac{47473653}{6943736800} & -\frac{1469561}{301901600} &
   -\frac{3532337}{1207606400} & -\frac{7661559}{3967849600} &
   \frac{63201}{8625760} & -\frac{3151}{8625760} & -\frac{4847}{41641600} &
   -\frac{4847}{41641600} & -\frac{193079}{3967849600} \\
 -\frac{47473653}{6943736800} & -\frac{1469561}{301901600} &
   -\frac{3532337}{1207606400} & -\frac{7661559}{3967849600} &
   -\frac{3151}{8625760} & \frac{63201}{8625760} & -\frac{4847}{41641600} &
   -\frac{4847}{41641600} & -\frac{193079}{3967849600} \\
 -\frac{19950493}{7776985216} & -\frac{7204037}{4226622400} &
   -\frac{161089}{162562400} & -\frac{123349341}{194424630400} &
   -\frac{4847}{41641600} & -\frac{4847}{41641600} & \frac{157523}{72872800} &
   -\frac{2637}{72872800} & -\frac{2884319}{194424630400} \\
 -\frac{19950493}{7776985216} & -\frac{7204037}{4226622400} &
   -\frac{161089}{162562400} & -\frac{123349341}{194424630400} &
   -\frac{4847}{41641600} & -\frac{4847}{41641600} & -\frac{2637}{72872800} &
   \frac{157523}{72872800} & -\frac{2884319}{194424630400} \\
 -\frac{228203991}{194424630400} & -\frac{1334771}{1767496640} &
   -\frac{83883901}{194424630400} & -\frac{3764589}{13887473600} &
   -\frac{193079}{3967849600} & -\frac{193079}{3967849600} &
   -\frac{2884319}{194424630400} & -\frac{2884319}{194424630400} &
   \frac{5681341}{6943736800} \\
\end{array}
\right)}$}

\section{The covariances in the matrix $\Sigma$ in Section \ref{ex3}}\label{appB}

\begin{align*}
\Sigma_{1,1}&=\frac{\gk \left(-14 \gk^5-73 \gk^4+131 \gk^3+1438 \gk^2+3018 \gk+2070\right)}{(\gk+1) (\gk+2)^2 (\gk+3)^2 (2 \gk+1) (2 \gk+3) (2 \gk+5)};
   \nonumber\\\Sigma_{1,2}&=\frac{3 \gk \left(2 \gk^6+37 \gk^5+124 \gk^4-55 \gk^3-900 \gk^2-1488 \gk-720\right)}{4 (\gk+1)^2 (\gk+2)^2 (\gk+3)^2 (2 \gk+1) (2 \gk+3) (2 \gk+5)};
     \nonumber\\\Sigma_{1,3}&=\frac{\gk^2 \left(10 \gk^5-47 \gk^4-664 \gk^3-2083 \gk^2-2592 \gk-1080\right)}{4 (\gk+1)^2 (\gk+2)^2 (\gk+3)^2 (2 \gk+1) (2 \gk+3) (2 \gk+5)};
      \nonumber\\\Sigma_{1,4}&=\frac{\gk \left(10 \gk^5-47 \gk^4-664 \gk^3-2083 \gk^2-2592 \gk-1080\right)}{4 (\gk+1)^2 (\gk+2)^2 (\gk+3)^2 (2 \gk+1) (2 \gk+3) (2 \gk+5)};
       \nonumber\\\Sigma_{1,5}&=-\frac{\gk \left(20 \gk^6+104 \gk^5-69 \gk^4-1088 \gk^3-1307 \gk^2+1224 \gk+2160\right)}{2 (\gk-1) (\gk+1) (\gk+2)^2 (\gk+3)^2 (2 \gk+1) (2 \gk+3) (2 \gk+5)};
        \nonumber\\\Sigma_{2,2}&=\frac{3 \gk \left(17 \gk^6+145 \gk^5+507 \gk^4+929 \gk^3+976 \gk^2+612 \gk+180\right)}{2 (\gk+1)^2 (\gk+2)^2 (\gk+3)^2 (2 \gk+1) (2 \gk+3) (2 \gk+5)};
         \nonumber\\\Sigma_{2,3}&=-\frac{\gk^3 \left(80 \gk^4+579 \gk^3+1558 \gk^2+1803 \gk+720\right)}{4 (\gk+1)^2 (\gk+2)^2 (\gk+3)^2 (2 \gk+1) (2 \gk+3) (2 \gk+5)};
          \nonumber\\\Sigma_{2,4}&=-\frac{\gk^2 \left(80 \gk^4+579 \gk^3+1558 \gk^2+1803 \gk+720\right)}{4 (\gk+1)^2 (\gk+2)^2 (\gk+3)^2 (2 \gk+1) (2 \gk+3) (2 \gk+5)};
           \nonumber\\\Sigma_{2,5}&=\frac{\gk \left(28 \gk^7+264 \gk^6+829 \gk^5+816 \gk^4-515 \gk^3-702 \gk^2+1152 \gk+1080\right)}{4 (\gk-1) (\gk+1)^2 (\gk+2)^2 (\gk+3)^2 (2 \gk+1) (2 \gk+3) (2 \gk+5)};
            \nonumber\\\Sigma_{3,3}&=\frac{\gk^2 \left(49 \gk^4+276 \gk^3+539 \gk^2+396 \gk+90\right)}{2 (\gk+1)^2 (\gk+2) (\gk+3)^2 (2 \gk+1) (2 \gk+3) (2 \gk+5)};
    \nonumber\\\Sigma_{3,4}&=-\frac{\gk^3 \left(16 \gk^3+87 \gk^2+152 \gk+75\right)}{2 (\gk+1)^2 (\gk+2) (\gk+3)^2 (2 \gk+1) (2 \gk+3) (2 \gk+5)};
     \nonumber\\\Sigma_{3,5}&=-\frac{\gk^2 \left(4 \gk^6+64 \gk^5+251 \gk^4+194 \gk^3-729 \gk^2-1488 \gk-720\right)}{4 (\gk-1) (\gk+1)^2 (\gk+2)^2 (\gk+3)^2 (2 \gk+1) (2 \gk+3) (2 \gk+5)};
      \nonumber\\\Sigma_{4,4}&=\frac{\gk \left(16 \gk^5+120 \gk^4+341 \gk^3+462 \gk^2+321 \gk+90\right)}{2 (\gk+1)^2 (\gk+2) (\gk+3)^2 (2 \gk+1) (2 \gk+3) (2 \gk+5)};
       \nonumber\\\Sigma_{4,5}&=-\frac{\gk \left(4 \gk^6+64 \gk^5+251 \gk^4+194 \gk^3-729 \gk^2-1488 \gk-720\right)}{4 (\gk-1) (\gk+1)^2 (\gk+2)^2 (\gk+3)^2 (2 \gk+1) (2 \gk+3) (2 \gk+5)};
        \nonumber\\\Sigma_{5,5}&=\frac{\gk \left(-4 \gk^7-4 \gk^6+147 \gk^5+574 \gk^4+610 \gk^3-51 \gk^2+132 \gk+630\right)}{(\gk-1)^2 (\gk+1) (\gk+2)^2 (\gk+3)^2 (2 \gk+1) (2 \gk+3) (2 \gk+5)}
.
\end{align*}

\end{document}